\providecommand \@dotsep{5} \def\listtodoname{List of Todos} \def\listoftodos{\@starttoc{tdo}\listtodoname} \makeatother %\Todo{} for margin notes, suppress in pdf with option [disable]
\patchcmd{\@startsection}{\@afterindenttrue}{\@afterindentfalse}{}{}             %omit indentation of the first paragraph of a section
\patchcmd{\part}{\bfseries}{\bfseries\LARGE}{}{}
\patchcmd{\section}{\scshape}{\bfseries}{}{}\renewcommand{\@secnumfont}{\bfseries} %boldface no smallcaps section and subsection titles with numbers
\patchcmd{\@settitle}{\uppercasenonmath\@title}{\large}{}{}
\patchcmd{\@setauthors}{\MakeUppercase}{}{}{}
\theoremstyle{plain}
\newtheorem{thm}{Theorem}[section] % provides command \autoref{}, which produces citations like ``Theorem 1.1''.
\newaliascnt{lemma}{thm}\newtheorem{lemma}[lemma]{Lemma}\aliascntresetthe{lemma}
\newaliascnt{cor}{thm}\newtheorem{cor}[cor]{Corollary}\aliascntresetthe{cor}
\newaliascnt{prop}{thm}\aliascntresetthe{prop}
\newtheorem{thmA}{Theorem} \newaliascnt{corA}{thmA}\newtheorem{corA}[corA]{Corollary}\aliascntresetthe{corA}
\newaliascnt{lemmaA}{thmA}\aliascntresetthe{lemmaA}
\newaliascnt{propA}{thmA}\aliascntresetthe{propA}
\newtheorem*{thm*}{Theorem}
\newtheorem*{lem*}{Lemma}
\newtheorem*{cor*}{Corollary}
\theoremstyle{definition}
\newaliascnt{df}{thm}\newtheorem{df}[df]{Definition}\aliascntresetthe{df}
\newaliascnt{rem}{thm}\newtheorem{rem}[rem]{Remark}\aliascntresetthe{rem}
\newaliascnt{ex}{thm}\aliascntresetthe{ex}
\newtheorem*{df*}{Definition}
\newtheorem*{ex*}{Example}
\newtheorem*{rem*}{Remark}
\theoremstyle{remark}
\DeclareRobustCommand{\gobblefour}[5]{}    % Command \SkipTocEntry for suppressing a section title in TOC
\DeclareFontFamily{OT1}{pzc}{}                                % Script font for small caligraphic letter, like in \cMat
\DeclareFontShape{OT1}{pzc}{m}{it}{<-> s * [1.10] pzcmi7t}{}
\DeclareMathAlphabet{\mathpzc}{OT1}{pzc}{m}{it}
\DeclareSymbolFont{sfoperators}{OT1}{bch}{m}{n} \DeclareSymbolFontAlphabet{\mathsf}{sfoperators} \makeatletter\def\operator@font{\mathgroup\symsfoperators}\makeatother % different font for math operators
\DeclareSymbolFont{cmletters}{OML}{cmm}{m}{it}              
\DeclareSymbolFont{cmsymbols}{OMS}{cmsy}{m}{n}
\DeclareSymbolFont{cmlargesymbols}{OMX}{cmex}{m}{n}
\DeclareMathSymbol{\myjmath}{\mathord}{cmletters}{"7C}     \let\jmath\myjmath %Defining the missing commands: \jmath, \amalg and \coprod
\DeclareMathSymbol{\myamalg}{\mathbin}{cmsymbols}{"71}     
\DeclareMathSymbol{\mycoprod}{\mathop}{cmlargesymbols}{"60}
\DeclareMathSymbol{\myalpha}{\mathord}{cmletters}{"0B}     \let\alpha\myalpha %Greek letters from Computer Modern since the Greek letters from mathptmx are too large
\DeclareMathSymbol{\mybeta}{\mathord}{cmletters}{"0C}      \let\beta\mybeta
\DeclareMathSymbol{\mygamma}{\mathord}{cmletters}{"0D}     \let\gamma\mygamma
\DeclareMathSymbol{\mydelta}{\mathord}{cmletters}{"0E}     \let\delta\mydelta
\DeclareMathSymbol{\myepsilon}{\mathord}{cmletters}{"0F}   \let\epsilon\myepsilon
\DeclareMathSymbol{\myzeta}{\mathord}{cmletters}{"10}      \let\zeta\myzeta
\DeclareMathSymbol{\myeta}{\mathord}{cmletters}{"11}       \let\eta\myeta
\DeclareMathSymbol{\mytheta}{\mathord}{cmletters}{"12}     \let\theta\mytheta
\DeclareMathSymbol{\myiota}{\mathord}{cmletters}{"13}      \let\iota\myiota
\DeclareMathSymbol{\mykappa}{\mathord}{cmletters}{"14}     \let\kappa\mykappa
\DeclareMathSymbol{\mylambda}{\mathord}{cmletters}{"15}    \let\lambda\mylambda
\DeclareMathSymbol{\mymu}{\mathord}{cmletters}{"16}        \let\mu\mymu
\DeclareMathSymbol{\mynu}{\mathord}{cmletters}{"17}        \let\nu\mynu
\DeclareMathSymbol{\myxi}{\mathord}{cmletters}{"18}        \let\xi\myxi
\DeclareMathSymbol{\mypi}{\mathord}{cmletters}{"19}        \let\pi\mypi
\DeclareMathSymbol{\myrho}{\mathord}{cmletters}{"1A}       \let\rho\myrho
\DeclareMathSymbol{\mysigma}{\mathord}{cmletters}{"1B}     \let\sigma\mysigma
\DeclareMathSymbol{\mytau}{\mathord}{cmletters}{"1C}       \let\tau\mytau
\DeclareMathSymbol{\myupsilon}{\mathord}{cmletters}{"1D}   \let\upsilon\myupsilon
\DeclareMathSymbol{\myphi}{\mathord}{cmletters}{"1E}       \let\phi\myphi
\DeclareMathSymbol{\mychi}{\mathord}{cmletters}{"1F}       \let\chi\mychi
\DeclareMathSymbol{\mypsi}{\mathord}{cmletters}{"20}       \let\psi\mypsi
\DeclareMathSymbol{\myomega}{\mathord}{cmletters}{"21}     \let\omega\myomega
\DeclareMathSymbol{\myvarepsilon}{\mathord}{cmletters}{"22}\let\varepsilon\myvarepsilon
\DeclareMathSymbol{\myvartheta}{\mathord}{cmletters}{"23}  \let\vartheta\myvartheta
\DeclareMathSymbol{\myvarpi}{\mathord}{cmletters}{"24}     \let\varpi\myvarpi
\DeclareMathSymbol{\myvarrho}{\mathord}{cmletters}{"25}    \let\varrho\myvarrho
\DeclareMathSymbol{\myvarsigma}{\mathord}{cmletters}{"26}  \let\varsigma\myvarsigma
\DeclareMathSymbol{\myvarphi}{\mathord}{cmletters}{"27}    \let\varphi\myvarphi
\newcommand\cB{{\mathcal B}}
\newcommand\cC{{\mathcal C}}
\newcommand\cI{{\mathcal I}}
\newcommand\cL{{\mathcal L}}
\newcommand\cM{{\mathcal M}}
\newcommand\xB{{\beta}}
\renewcommand\max{\textup{max}}
\renewcommand\geq{\geqslant}
\renewcommand\leq{\leqslant}
\renewcommand{\setminus}{\backslash}
\renewcommand{\iff}{\ensuremath{\Leftrightarrow}}
\renewcommand\emptyset\varnothing
\title{Matroid Bingo}
\author{Matthew Baker}
\address{\rm Matthew Baker, School of Mathematics, Georgia Institute of Technology, Atlanta, USA}
\email{mbaker@math.gatech.edu}
\author{Hope Dobbelaere}
\address{\rm Hope Dobbelaere, School of Mathematics, Georgia Institute of Technology, Atlanta, USA}
\email{hdobbelaere3@gatech.edu}
\author{Brennan Fullmer}
\address{\rm Brennan Fullmer, School of Mathematics, Georgia Institute of Technology, Atlanta, USA}
\email{bfullmer3@gatech.edu}
\author{Patrik Gajdo\v{s}}
\address{\rm Patrik Gajdo\v{s}, University of Groningen, Groningen, The Netherlands}
\email{p.gajdos@student.rug.nl}
\begin{document}

\begin{abstract}
We investigate some natural probability distributions associated with the game of matroid bingo. 
\end{abstract}

\date{\today}
\thanks{M.B. was partially supported by NSF grant DMS-2154224 and a Simons Fellowship in Mathematics (1037306).
We thank Oliver Lorscheid and Zach Walsh for helpful discussions, and Changxin Ding, Donggyu Kim, Matt Larson, and Noah Solomon for their feedback on a preliminary copy of this manuscript. ChatGPT 5.0 provided assistance with some of the figures and tables.}

\maketitle

\section{Introduction}

\subsection{Matroid bingo}
Consider the following simplified version of the game of bingo.
Each of the $m$ players gets a card on which is printed a subset of $\{1,...,n\}$ for some positive integer $n$.
The bingo caller has a box filled with $n$ balls, on which the numbers $1$ through $n$ are written (no duplications).
The caller selects a ball at random, reads out the number, and each player who has that number on their card circles it.
The caller continues this process, selecting balls at random until someone calls out ``Bingo!'', signifying that they have circled all the numbers on their card.

The company that manufactures the cards (let us call it Matroid, Inc.) follows three simple rules in the design process:
\begin{enumerate}
    \item[(B1)] No bingo card is empty. In other words, no one shouts ``Bingo!'' before the game even starts.
%    [Non-trivial] 
    \item[(B2)] Every player has a nonzero chance of winning. In other words, no card is a proper subset of another one.
% [Not pointless] 
    \item[(B3)] There is no possibility of a tie. In other words, it can never happen that two players shout ``Bingo!'' at the same time; this guarantees that the game always has a unique winner.
% [Unique winner] 
\end{enumerate}
% , for reasons which will be apparent soon) 

For example, consider the following set of cards (with $m=5$ players and $n=8$ possible numbers):

\begin{figure}[h]
    \centering
    \includegraphics[scale = 0.5]{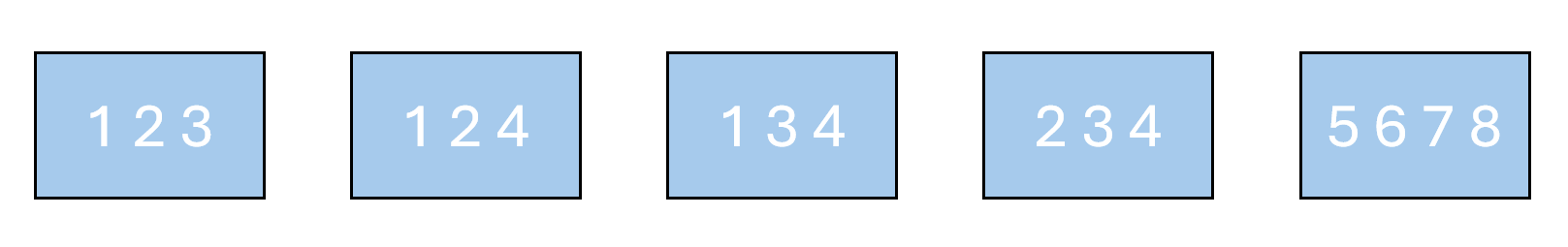}
    \caption{A valid set of matroid bingo cards.}
    \label{fig:cards-1}
\end{figure}

It is not difficult to check that this particular set of cards satisfies properties (B1) through (B3).
% , and thus comprises a valid game.

\medskip

On the other hand, the following is {\bf not} a valid set of bingo cards, because if the bingo caller happens to draw the numbered balls in numerical order $1,2,3$, both players will shout ``Bingo!'' at the same time.

\begin{figure}[h]
    \centering
    \includegraphics[scale = 0.5]{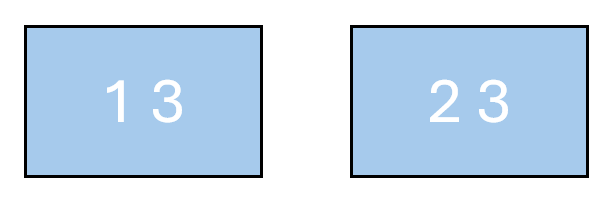}
    \caption{A set of cards for which a tie is possible, and therefore doesn't comprise a valid bingo game.}
    \label{fig:enter-label}
\end{figure}

Despite the simplicity of the rules, it turns out that this game encodes --- in a precise way --- the axioms for a \emph{matroid}.\footnote{More precisely, a set ${\mathcal C}$ of bingo cards is valid if and only if ${\mathcal C}$ is the collection of circuits of a matroid on $\{ 1,\ldots, n \}$.} 
% We review the definiton of matroids and discuss the correspondence in Section~\ref{sec:matroids} below. 
For this reason, we will refer to the game as \textbf{matroid bingo}.

\subsection{Matroids}

For the purposes of this paper, we will use the following characterization\footnote{Matroids can famously be characterized using numerous different systems of axioms. Axiomatic descriptions that characterize the same objects but in a non-obvious way are known in the field as \emph{cryptomorphisms}.} of matroids:

% For the rest of the paper, we switch to the language of matroids instead of talking directly about matroid bingo. We begin by recalling the definition of a matroid in terms of its \emph{circuits}.

\begin{df} A \textbf{matroid} $\cM$ is a finite set $E$ together with a collection $\cC$ of subsets of $E$, called the \textbf{circuits} of the matroid, such that:
\begin{enumerate}
    \item[(C1)] $\emptyset \notin \cC$.
    \item[(C2)] If $C_1, C_2 \in \cC$ and $C_1 \subseteq C_2$, then $C_1 = C_2$.
    \item[(C3)] If $C_1, C_2 \in \cC$, $C_1 \neq C_2$, and $e\in C_1 \cap C_2$, there exists $C_3 \in \cC$ such that $C_3 \subseteq (C_1 \cup C_2) \backslash \{e\}$.
\end{enumerate}
\end{df}

We can readily verify that given the set $E = \{ 1,\ldots, n \}$, a collection $\cC$ of subsets of $E$ (identified with bingo cards) satisfies axioms (B1)-(B3) iff $\cC$ satisfies (C1)-(C3).
% , i.e., $\cC$ is the set of circuits of a matroid on $E$.
Indeed, it is clear that (B1) corresponds to (C1) and (B2) corresponds to (C2), so the only issue is to relate (B3) and (C3).
If $\cC$ satisfies (C3), then there cannot be a tie in matroid bingo, since if calling out a shared element $e$ belonging to distinct cards $C_1$ and $C_2$ were to complete both $C_1$ and $C_2$, (C3) guarantees that there is another bingo card $C_3 \subseteq (C_1 \cup C_2) \backslash \{e\}$ which was already complete before $e$ was called out. Conversely, if $\cC$ satisfies (B3), then given $C_1 \neq C_2$ in $\cC$ and $e \in C_1 \cap C_2$, if the bingo caller were to call out the numbers belonging to $C_1 \cup C_2 \backslash \{ e \}$ before all the other numbers, followed immediately by $e$, there would be a tie---violating (B3)---unless some other card $C_3$ had already been completed. This means precisely that $C_3$ is a circuit of $\cM$ with $C_3 \subseteq (C_1 \cup C_2) \backslash \{e\}$.

\medskip

The game of matroid bingo was invented by Dima Fon-Der-Flaass (cf.~\cite[p.5]{CameronNotes} and \cite[p.33]{Borovik-Gelfand-White}). Although it is arguably the simplest and most intuitive way to explain the concept of a matroid to a non-mathematician, the game does not appear to be widely known, even within the matroid community. 
% A secondary goal of the present paper is to help popularize this interpretation of matroids.

\subsection{Monotonicity}
\label{sec:intro-monotonicity}

Our goal in this paper is to explore some questions which arise quite naturally when one starts to think about matroids from the point of view of matroid bingo. For example, take a look at \autoref{fig:cards-1} again. If you were given a choice of any of these cards to play, which one would you pick?

\medskip

It is tempting to choose one of the cards with three numbers, rather than four, simply because the list of numbers to be circled is shorter. However, upon further inspection it is not clear that this is the right decision, since all the short cards have a lot of overlap with one another, creating more competition when the numbers on these cards are called out. It turns out that the optimal choice---the one which leads to the highest probability of winning---is to pick the card $\{ 5,6,7,8 \}$ with four numbers on it (cf. \autoref{tab:matroid-1}). Whenever a longer card beats or ties with a shorter one, we call it a \textbf{monotonicity violation}.

\medskip

% We show in this paper that, 
Although monotonicity violations exist, they are relatively rare. 
% In fact, we conjecture that asymptotically 100\% of all legal sets of bingo cards do not involve a monotonicity violation. 
For cards using only single-digit numbers, we systematically tested all possibilities and found 
% (up to ``isomorphism'') 
precisely 11 instances of monotonicity violations out of the 385,360 isomorphism classes of matroids with at most 9 elements (see \autoref{Appendix A} for the complete list).\footnote{We have put together a GitHub repository containing more detailed information about our computer calculations, see \url{https://github.com/MathMayhem/Matroid-Bingo}. The matroids were gathered from an online database created by Dillon Mayhew and Gorden F. Royle \cite{mayhew2008matroids}.}

% These are listed in Appendix~\ref{Appendix A}.

\medskip

The reader might wonder if the monotonicity violation depicted in \autoref{fig:cards-1} is an artifact of the card $\{ 5,6,7,8 \}$ being disjoint from all the other cards, which only involve the numbers 1 through 4. (In the language of matroids, this corresponds to the fact that the corresponding matroid is disconnected.) However, the following example shows that the situation is more complicated than this:

\begin{figure}[h]
    \centering
    \includegraphics{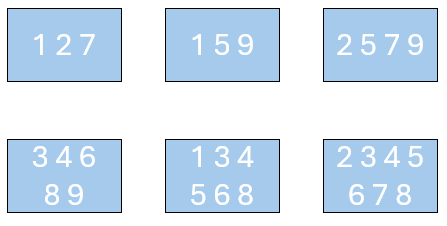}
    \caption{A set of bingo game cards corresponding to a connected matroid with a monotonicity violation.}
    \label{fig:cards-3}
\end{figure}

It turns out that, if given a choice between $\{ 2, 5, 7, 9 \}$ and $\{ 3, 4, 6, 8, 9 \}$, you are better off choosing $\{ 3, 4, 6, 8, 9 \}$, as can be seen in \autoref{tab:intro table}.

\begin{figure}
\begin{center}
\begin{tabular}{| c | r @{\,$\approx$\,} l |}
    \hline
    \multicolumn{3}{|c|}{\textbf{Winning probabilities $\xB_C$}} \\
    \multicolumn{3}{|c|}{for the cards in \autoref{fig:cards-3}} \\
    \hline
    $C$ & \multicolumn{2}{c|}{$\xB_C$} \\
    \hline
    127      & $19/60$  & 0.3167 \\
    159      & $32/105$ & 0.3048 \\
    34689    & $37/252$ & 0.1468 \\
    2579     & $13/90$  & 0.1444 \\
    134568   & $5/84$   & 0.0595 \\
    2345678  & $1/36$   & 0.0278 \\
    \hline
\end{tabular}
\end{center}
    \caption{}
    \label{tab:intro table}
\end{figure}

\medskip

\subsection{Winning probabilities}
\label{sec:intro-winningprobabilities}
Let us formulate things more precisely now. Given a legal set of bingo cards---or, equivalently, the set $\cC$ of circuits of a matroid $\cM$---we denote by $\xB_C$ the probability that a given circuit $C$ wins the game. By assumption, $\xB_C > 0$ for all $C \in \cC$, and 
\[
\sum_{C \in \cC} \xB_C = 1.
\]

% Officially, given a matroid $\cM$ on $\{ 1,\ldots, n \}$ and a circuit $C$ of $M$, we have
% \[
% \xB_C := \frac{1}{n!} \sum_{\sigma \in S_n} 
% \]

% to the \emph{Tutte polynomial} of $M/C$, the matroid obtained from $M$ by contracting the circuit $C$ (cf.~\todo{Give forward reference} for a discussion of contractions and the Tutte polynomial). 

The following two theorems allow us to calculate these probabilities. We have found the second formula to be more useful, both for computer calculations and theoretical results, but we include both formulas for completeness.
% We have the following two theorems for calculating these probabilities. The second of which was used for many of our computer calculations and theoretical results.
In the statements, and elsewhere in the paper, we write $[n]$ as shorthand for the set $\{ 1,2,\ldots,n \}$.
Also, for a matroid $\cM$, the maximum size of a subset of $\{ 1,2,\ldots,n \}$ which does not contain any circuit is called the {\bf rank} of $\cM$, and is denoted $r(\cM)$.

\begin{thmA}
Given a matroid $\cM$ on $E = [n]$ and a circuit $C$ of $\cM$,
$$\xB_C = \sum_{k = 0}^{|\cC| - 1} \left( (-1)^k \cdot \sum_{\substack{S \subseteq \cC \backslash \{ C \}, \\ |S| = k}} \frac{|C|}{|(\cup_{C' \in S} C') \cup C|} \right).$$
\end{thmA}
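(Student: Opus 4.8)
The plan is to realize the game as a uniformly random linear ordering $\pi$ of the ball set $E=[n]$ (all $n!$ orderings equally likely), where $\pi(t)$ is the $t$-th ball drawn, and then to expand $\xB_C$ by inclusion--exclusion over the other circuits. For a nonempty $A\subseteq E$, write $\tau(A)=\max\{\,t:\pi(t)\in A\,\}$ for the step at which the last element of $A$ is drawn --- the moment the card $A$ is completed. A circuit $C$ wins precisely when it is completed strictly before every other circuit (by axiom (C3) there is never a tie, so this is unambiguous), so $\{C\text{ wins}\}=\bigcap_{C'\in\cC\setminus\{C\}}\{\tau(C)<\tau(C')\}$, which is the complement of $\bigcup_{C'\neq C}B_{C'}$ with $B_{C'}:=\{\tau(C')\leq\tau(C)\}$. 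Inclusion--exclusion then gives
\[
\xB_C=\sum_{S\subseteq\cC\setminus\{C\}}(-1)^{|S|}\,\Pr\Bigl[\,\bigcap_{C'\in S}B_{C'}\,\Bigr],
\]
and it remains only to evaluate each of these intersection probabilities.

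The second step is to collapse each intersection into a single transparent event. Since $\tau\bigl(\bigcup_{C'\in S}C'\bigr)=\max_{C'\in S}\tau(C')$, we have $\bigcap_{C'\in S}B_{C'}=\{\tau(D)\leq\tau(C)\}$ where $D=D_S:=\bigcup_{C'\in S}C'$ (with $D_\emptyset=\emptyset$). Now $\tau(D)\leq\tau(C)$ holds if and only if the last element of $D\cup C$ to be drawn lies in $C$: writing $m=\tau(D\cup C)$, if $\pi(m)\in C$ then $\tau(C)\geq m\geq\tau(D)$, while if $\pi(m)\notin C$ then $\pi(m)\in D$ forces $\tau(D)=m>\tau(C)$. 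Because the restriction of a uniformly random ordering of $E$ to any subset is again uniformly random, the last-drawn element of $D\cup C$ is uniformly distributed over $D\cup C$; since $C\subseteq D\cup C$, this yields
\[
\Pr\Bigl[\,\bigcap_{C'\in S}B_{C'}\,\Bigr]=\frac{|C|}{|D_S\cup C|}=\frac{|C|}{|(\bigcup_{C'\in S}C')\cup C|},
\]
with the term $S=\emptyset$ correctly contributing $|C|/|C|=1$.

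Substituting this into the inclusion--exclusion sum and grouping the subsets $S\subseteq\cC\setminus\{C\}$ by their cardinality $k=|S|$ reproduces the stated formula, the outer index running from $k=0$ to $|\cC|-1$ because $|\cC\setminus\{C\}|=|\cC|-1$. I do not expect a serious obstacle: the argument is essentially a single clean application of inclusion--exclusion, so the points needing the most care are bookkeeping ones --- namely, correctly identifying ``$C$ wins'' with the event $\{\tau(C)<\tau(C')\ \forall\,C'\neq C\}$ (equivalently, the complement of $\bigcup_{C'\neq C}\{\tau(C')\leq\tau(C)\}$; no-tie-ness makes the strict and non-strict versions coincide, but one should stay consistent), and verifying the equivalence $\{\tau(D)\leq\tau(C)\}=\{\text{last element of }D\cup C\text{ lies in }C\}$, which is exactly what produces the ratio $|C|/|D\cup C|$.
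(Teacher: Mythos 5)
Your proposal is correct and follows essentially the same route as the paper: both express the winning event as the complement of $\bigcup_{C'\neq C}\{\tau(C')\leq\tau(C)\}$, apply inclusion--exclusion, and evaluate each intersection by observing that it occurs exactly when the last-drawn element of $(\cup_{C'\in S}C')\cup C$ lies in $C$, giving the ratio $|C|/|(\cup_{C'\in S}C')\cup C|$. The only difference is cosmetic --- you argue probabilistically via the uniformity of the restricted ordering, while the paper counts permutations explicitly --- and your handling of the $S=\emptyset$ term and of the tie issue (ties can occur among later completions but never for first place) matches the paper's own remarks.
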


\begin{thmA} \label{thm:mainbetaformula}
Given a matroid $\cM$ on $E = [n]$ and a circuit $C$ of $\cM$, we have
\[\xB_C=\sum_{t=|C|}^{r(\cM)+1} \xB_C(t),\]
where
\begin{equation} \label{eq:introbetaformula1}
\xB_C(t) = \frac{|C|}{n} \cdot |\cI_{C, \, t - |C|}| \cdot \binom{n - 1}{t - 1}^{-1}.
\end{equation}

Here $\cI_{C, k}$ denotes the collection of all $k$-element subsets $S$ of $E \backslash C$ such that $C$ is the unique circuit contained in $S \cup C$.
% circuit $C$ is still the only circuit completed by $S \cup C$.
\end{thmA}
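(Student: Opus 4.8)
The plan is to work directly with the probabilistic model: the caller's draws form a uniformly random permutation $\pi=(\pi_1,\dots,\pi_n)$ of $E=[n]$. For a circuit $C$ define its \emph{completion time} $\tau(C)=\max\{\,i:\pi_i\in C\,\}$, the position at which the last element of $C$ is drawn; equivalently, $\tau(C)$ is the least $t$ with $C\subseteq T_t$, where $T_t:=\{\pi_1,\dots,\pi_t\}$. By definition $C$ wins precisely when $\tau(C)<\tau(C')$ for every circuit $C'\neq C$. The crucial first step is to unwind this: writing $t=\tau(C)$, the condition ``$\tau(C)<\tau(C')$ for all $C'\neq C$'' is exactly the statement that no circuit other than $C$ is contained in $T_t$, i.e.\ that $C$ is the \emph{unique} circuit contained in $T_t$. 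Consequently the event ``$C$ wins'' is the \emph{disjoint} union, over $t=|C|,\dots,n$, of the events $W_{C,t}$ on which $C\subseteq T_t$, $\pi_t\in C$ (so that $\tau(C)=t$), and $C$ is the unique circuit contained in $T_t$; disjointness is clear because $\tau(C)$ is a single well-defined integer. This reformulation is purely a matter of unwinding definitions.

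Next I would evaluate $\Pr[W_{C,t}]$ by conditioning on the unordered set $T_t$. For a fixed $t$-subset $T\subseteq E$ one has $\Pr[T_t=T]=\binom{n}{t}^{-1}$, and conditionally on $T_t=T$ the element $\pi_t$ is uniform on $T$, whence $\Pr[\pi_t\in C\mid T_t=T]=|C\cap T|/t$. The sets $T$ arising in $W_{C,t}$ are precisely those of the form $T=C\cup S$ with $S\subseteq E\setminus C$, $|S|=t-|C|$, and $C$ the unique circuit contained in $C\cup S$ --- that is, $S\in\cI_{C,\,t-|C|}$ --- and for such $T$ we have $|C\cap T|=|C|$. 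Summing over these $T$ gives
\[
\Pr[W_{C,t}]\;=\;|\cI_{C,\,t-|C|}|\cdot\binom{n}{t}^{-1}\cdot\frac{|C|}{t}\;=\;\frac{|C|}{n}\cdot|\cI_{C,\,t-|C|}|\cdot\binom{n-1}{t-1}^{-1},
\]
the last equality being the identity $\binom{n}{t}^{-1}\tfrac{|C|}{t}=\tfrac{|C|}{n}\binom{n-1}{t-1}^{-1}$. This right-hand side is exactly $\xB_C(t)$, so summing over $t$ yields $\xB_C=\sum_{t=|C|}^{n}\xB_C(t)$.

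To finish I would show that the summation truncates at $t=r(\cM)+1$. If $S\in\cI_{C,k}$ and $e\in C$, then $(C\cup S)\setminus\{e\}$ contains no circuit: any circuit inside it would be a circuit contained in $C\cup S$ that misses the element $e\in C$, hence is distinct from $C$, contradicting the uniqueness of $C$. Therefore $(C\cup S)\setminus\{e\}$ is independent and $|C|+k-1=|C\cup S|-1\leq r(\cM)$, so $\cI_{C,k}=\emptyset$ whenever $|C|+k>r(\cM)+1$; the terms with $t>r(\cM)+1$ thus vanish, giving the stated range $|C|\leq t\leq r(\cM)+1$.

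I do not expect a genuine obstacle: once the random-permutation model is fixed, the substantive point is the reformulation in the first paragraph, and the rest is routine bookkeeping. The two places meriting a little care are the disjointness of the events $W_{C,t}$ and the fact that, conditioned on the set $T_t$, the last draw $\pi_t$ is uniformly distributed on $T_t$; both are transparent from the model.
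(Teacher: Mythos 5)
Your proposal is correct and follows essentially the same route as the paper: both decompose the winning event by the round $t$ in which $C$ is completed, identify the admissible prefix sets as $C\cup S$ with $S\in\cI_{C,\,t-|C|}$, and extract the factor $|C|/t$ from the requirement that the $t^{\rm th}$ draw lie in $C$ (the paper phrases this as a direct four-step enumeration of permutations, you as conditioning on the unordered set $T_t$ — the same computation). Your explicit argument that $\cI_{C,k}=\emptyset$ for $|C|+k>r(\cM)+1$, via the independence of $(C\cup S)\setminus\{e\}$, is a welcome detail that the paper leaves implicit.
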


\begin{rem} \label{rem:Tuttepoly}
The right-hand side of \eqref{eq:introbetaformula1} has a simple reformulation in terms of the \emph{Tutte polynomial} of the contracted matroid $\cM/C$, 
% the matroid obtained from $M$ by contracting the circuit $C$; 
cf.~\autoref{thm:Tuttepolyformula}.
% \todo{Give forward reference.}
\end{rem}

% It is also of interest to consider 
\begin{rem}
The quantity $\xB_C(t)$ appearing in \eqref{eq:introbetaformula1} is the probability that a given bingo card $C$ wins in the $t^{\rm th}$ round of the game, where $t=1,2,\ldots,n$. We call these the \textbf{timed winning probabilities}.
A matroid bingo game always ends by round $r+1$, where $r$ is the rank of the corresponding matroid, so $\xB_C(t)=0$ for $t>r+1$. Similarly, the card \(C\) cannot win before at least \(|C|\) numbers have been called out, so \(\xB_C(t)=0\) for \(t<|C|\).
See \autoref{fig:Matroid 9 Timed Beta Values} for an example.
\end{rem}

\begin{figure}[h]
    \centering
    \includegraphics[width=0.8\linewidth]{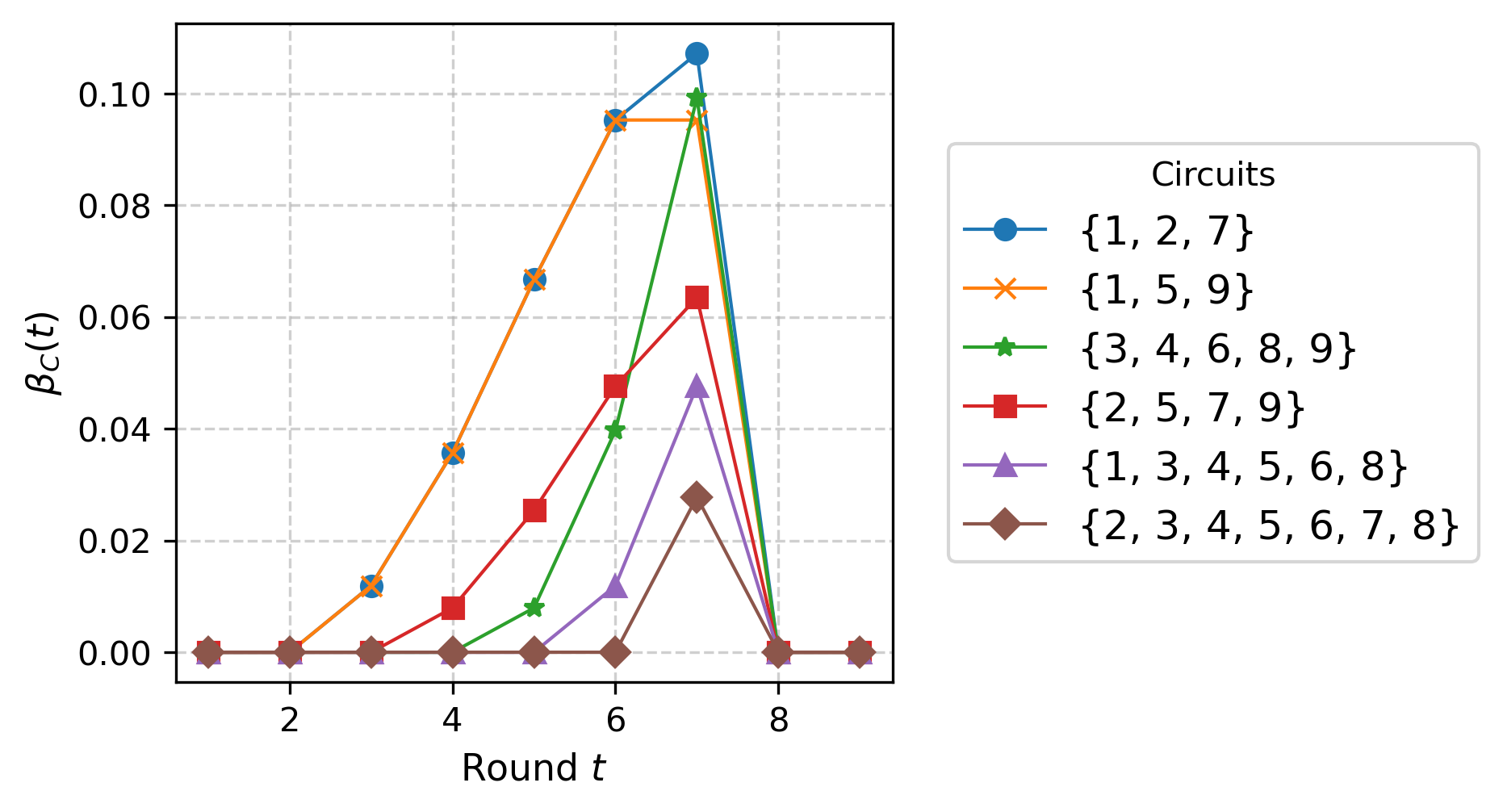}
    \caption{Timed winning probabilities for the matroid in \autoref{fig:cards-3}.}
    \label{fig:Matroid 9 Timed Beta Values}
\end{figure}

%\newpage

A sequence $a_1,\ a_2,\ldots,\ a_n$ of real numbers is \textbf{log-concave} if for any three consecutive terms, the middle term squared is at least as big as the product of its neighbors:
% \todo[inline]{Explain ultra log-concavity?}
\[
a_i^2 \ge a_{i-1} \cdot a_{i+1}.
\]

Combining \autoref{thm:mainbetaformula} with a deep recent result from combinatorial Hodge theory, we prove the following (\autoref{thm:logconcave}).

\begin{thmA} \label{thm:intro-logconcave}
For every matroid $\cM$ and every circuit $C$ of $\cM$, the sequence 
\[
\xB_C(1),\ \xB_C(2),\ldots,\ \xB_C(n)
\]
of timed winning probabilities is log-concave. 
\end{thmA}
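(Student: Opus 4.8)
The plan is to feed the formula of \autoref{thm:mainbetaformula} into the ``ultra-log-concavity'' theorem for matroid independence numbers. The first --- and essentially the only genuinely matroid-theoretic --- step is to identify the sets counted by $\cI_{C,k}$. I claim that for $S\subseteq E\setminus C$, the circuit $C$ is the unique circuit contained in $S\cup C$ if and only if $S$ is independent in the contracted matroid $\cM/C$. Since $C$ is itself a circuit of $\cM$ contained in $S\cup C$, that set always has nullity at least $1$; and a set of nullity $\geq 2$ necessarily contains two distinct circuits (take a circuit $D$ inside it and delete an element $e\in D$: the result still has nullity $\geq 1$, hence contains a circuit, which avoids $e$). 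Therefore $C$ is the unique circuit of $S\cup C$ precisely when $r_\cM(S\cup C)=|S|+|C|-1=|S|+r_\cM(C)$, which is exactly the condition that $S$ be independent in $\cM/C$. Writing $I_k$ for the number of size-$k$ independent sets of $\cM/C$ (so $I_k>0$ exactly for $0\leq k\leq r(\cM/C)=r(\cM)-|C|+1$), \autoref{thm:mainbetaformula} becomes
\[
\xB_C(|C|+k)\;=\;\frac{|C|}{n}\cdot I_k\cdot\binom{n-1}{|C|+k-1}^{-1}\qquad(k\geq 0),
\]
with $\xB_C(t)=0$ for $t<|C|$; this identity is what underlies the Tutte-polynomial reformulation mentioned in \autoref{rem:Tuttepoly}.

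The deep input is the theorem --- conjectured by Mason and proved by Anari--Liu--Oveis Gharan--Vinzant and by Br\"and\'en--Huh via the theory of Lorentzian polynomials (in the circle of ideas around combinatorial Hodge theory) --- that for any matroid on $d$ elements the sequence $\bigl(I_k/\binom{d}{k}\bigr)_{k}$ is log-concave. Apply this to $\cM/C$, which has $d:=n-|C|$ elements, and factor
\[
\xB_C(|C|+k)\;=\;\frac{|C|}{n}\cdot\frac{I_k}{\binom{d}{k}}\cdot g(k),\qquad g(k):=\frac{\binom{d}{k}}{\binom{n-1}{|C|+k-1}}.
\]
A short computation gives $g(k)/g(k-1)=(k+|C|-1)/k$, which is non-increasing in $k$, so $g$ is log-concave as well; and a term-wise product of nonnegative log-concave sequences is again log-concave, so $\bigl(\xB_C(|C|+k)\bigr)_{k\geq 0}$ is log-concave. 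Finally, the independence complex of $\cM/C$ is downward closed, so the support of $t\mapsto\xB_C(t)$ is the entire interval $|C|\leq t\leq r(\cM)+1$; padding a strictly positive log-concave sequence with zeros at both ends keeps it log-concave, which yields the assertion for $\xB_C(1),\ldots,\xB_C(n)$.

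The point I would flag as the crux is that ordinary log-concavity of $(I_k)$ --- the matroid case of Heron--Rota--Welsh established by Adiprasito--Huh--Katz --- does \emph{not} suffice here, since the weights $\binom{n-1}{t-1}^{-1}$ form a \emph{log-convex} sequence in $t$ and can destroy log-concavity; one genuinely needs the ultra-log-concave strengthening. Concretely, with $k=t-|C|$ the inequality $\xB_C(t)^2\geq\xB_C(t-1)\xB_C(t+1)$ reduces --- using $\binom{n-1}{t-1}^2\big/\bigl(\binom{n-1}{t-2}\binom{n-1}{t}\bigr)=t(n-t+1)\big/\bigl((t-1)(n-t)\bigr)$ together with the ultra-log-concave bound for $\cM/C$ --- to the elementary inequality $\tfrac{t-|C|+1}{t-|C|}\geq\tfrac{t}{t-1}$, i.e.\ to $|C|\geq 1$. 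So all the real weight is carried by the cited independence-number theorem, while the rest is the identity $|\cI_{C,k}|=I_k(\cM/C)$ plus routine manipulation of binomial coefficients.
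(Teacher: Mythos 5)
Your proposal is correct and follows essentially the same route as the paper: the formula of Theorem~\ref{thm:mainbetaformula}, the identification of $\cI_{C,k}$ with the size-$k$ independent sets of $\cM/C$, the ultra-log-concavity of independence numbers (strong Mason, applied to $\cM/C$), log-concavity of the binomial-ratio factor via the non-increasing ratio $(k+|C|-1)/k$, and closure of log-concavity under Hadamard products. The only cosmetic difference is that you justify the identification of $\cI_{C,k}$ by a nullity count rather than the paper's circuit-elimination argument, and your closing remark that ordinary log-concavity of $(I_k)$ would not suffice is a correct and worthwhile observation.
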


It is well-known (see, e.g., \cite{unimodal}) that a log-concave sequence $a_0,a_1,a_2,\ldots,a_n$ of non-negative numbers with no internal zeros\footnote{A sequence $a_0,a_1,a_2,\ldots,a_n$ has \textbf{no internal zeros} if whenever $a_i = 0$ for some index $i$, either all terms before it or all terms after it are also zero, i.e., the set of indices $i$ where $a_i \neq 0$ forms a contiguous block of integers.}
% \todo{Define internal zeros.}
is \textbf{unimodal}, meaning that the terms weakly increase up to a certain point and then weakly decrease, i.e.,
there exists an index $m$ such that $a_0 \le a_1 \le \cdots \le a_m$ and $a_m \ge a_{m+1} \ge \cdots \ge a_n$.
One deduces from this that the sequence 
$\xB_C(1),\ \xB_C(2),\ldots,\ \xB_C(n)$
is unimodal. 
Although intuitively plausible, this is a highly non-trivial fact about matroid bingo.\footnote{From a purely pedagogical perspective, this might also be the simplest way to explain a particular instance of combinatorial Hodge theory to a non-mathematician!}

\subsection{Upper and lower bounds on winning probabilities}

% The definition of rank that was given here was wrong. Rank is the cardinality of any basis not one less than the cardinality of the largest circuit, and there are matroids without any r + 1 sized circuits. Since we already have this definition in the matroid theory background section, I will just reference it here.

% An important notion for the following theorems is that of the \textit{rank} of a matroid, as defined in Section~\ref{sec:matroids}.

% We define the \emph{rank} of a valid set $\cC$ of matroid bingo cards to be 
% This coincides with the rank of the corresponding matroid $\cM$ (see Section~\ref{sec:matroids} for a definition) unless 
% $\cC = \emptyset$,
% % $\cM$ is Boolean, i.e., every set is independent, in which case there are no circuits at all and 
% in which case matroid bingo is uninteresting. 
% When no confusion is likely to arise, we sometimes write $r$ instead of $r(\cC)$ or $r(M)$.
% \todo[inline]{Technically, if every subset of $[n]$ is independent then there are no circuits and this definition is not correct. Figure out a clean way to sweep this under the rug.}

% For bingo cards $C$ of the maximum possible size $r+1$, we have \todo{Give forward reference.}:

% \begin{lemma}
% If $|C|=r+1$ then $\xB_C=\binom{n}{r+1}^{-1}.$
% \end{lemma}

% For all other values of $|C|$ with $1 \leq |C| \leq r$, the winning probability $\xB_C$ depends on $\cC$. 
We will prove the following upper and lower bounds for $\xB_C$ (\autoref{thmBC} and \autoref{cor:thmD-simplified}):

\begin{thmA}\label{Upper bound}
Let $\cM$ be a matroid of rank $r$ on $n$ elements and let $C$ be a circuit of \(\cM\). 
% with $|C| = r + 1 - d$ for some $0 \leq d < r + 1$. 
Then,
$$\xB_C \leq \frac{\binom{r+1}{|C|}}{\binom{n}{|C|}} = \frac{(r + 1)! \cdot (n - |C|)!}{n! \cdot (r+1-|C|)!}.$$
\end{thmA}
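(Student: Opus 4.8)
The plan is to feed the timed-probability formula of \autoref{thm:mainbetaformula} into the crudest possible estimate on $|\cI_{C,k}|$ and then recognize the resulting sum as a hockey-stick identity. Write $c=|C|$ and $r=r(\cM)$. First I would reindex \autoref{thm:mainbetaformula} by setting $t=k+c$, so that
\[
\xB_C \;=\; \frac{c}{n}\sum_{k=0}^{\,r+1-c} |\cI_{C,k}|\cdot\binom{n-1}{k+c-1}^{-1}.
\]
Since $\cI_{C,k}$ is by definition a collection of $k$-element subsets of $E\setminus C$, and $|E\setminus C|=n-c$, we have the trivial bound $|\cI_{C,k}|\le\binom{n-c}{k}$. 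This is the only step that uses anything about matroids at all, and it is the conceptual heart of the argument: it is an equality for every $k$ in range precisely when $\cM/C$ has no circuits of size $\le r+1-c$ (e.g.\ for uniform matroids), which is exactly why the stated bound is sharp and cannot be tightened without further hypotheses.

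The remaining work is a short algebraic identity. Using $\binom{n}{c}\binom{n-c}{k}=\binom{n}{k+c}\binom{k+c}{c}$ together with $n\binom{n-1}{k+c-1}=(k+c)\binom{n}{k+c}$, one checks that
\[
\frac{c}{n}\cdot\binom{n-c}{k}\cdot\binom{n-1}{k+c-1}^{-1}
=\frac{1}{\binom{n}{c}}\cdot\frac{c}{k+c}\binom{k+c}{c}
=\frac{1}{\binom{n}{c}}\binom{k+c-1}{c-1}.
\]
Summing over $k=0,\dots,r+1-c$ and substituting $j=k+c-1$ turns the sum into $\sum_{j=c-1}^{r}\binom{j}{c-1}$, which equals $\binom{r+1}{c}$ by the hockey-stick identity. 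Combining, $\xB_C\le \binom{r+1}{c}/\binom{n}{c}$, and the factorial form in the statement is just a rewrite of the binomials.

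I do not expect any real obstacle here: once \autoref{thm:mainbetaformula} is granted, the proof is a one-line combinatorial estimate followed by two standard binomial identities. The only point worth a sanity check is whether the estimate $|\cI_{C,k}|\le\binom{n-c}{k}$ is strong enough, and the uniform-matroid case confirms it is — there every term is tight, and when $c=r+1$ the final inequality becomes the equality $\xB_C=1/\binom{n}{r+1}$, showing the bound is attained. (If one wanted, \autoref{rem:Tuttepoly} suggests an alternative route via the Tutte polynomial of $\cM/C$, but the direct computation above is shorter.)
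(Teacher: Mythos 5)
Your proposal is correct and follows essentially the same route as the paper's proof of \autoref{thmBC}: both start from the timed formula of \autoref{thm:mainbetaformula}, apply the same crude bound $|\cI_{C,k}|\le\binom{n-|C|}{k}$, and then evaluate the resulting sum in closed form (the paper via falling factorials and the discrete fundamental theorem of calculus, you via the equivalent hockey-stick identity). The algebraic identities you use check out, so there is nothing to fix.
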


\begin{comment}
\begin{thmA}\label{sharp lower bound}
Let $\cM$ be a matroid of rank $r$ on $n$ elements (in particular, $r \leq n$). Let $C$ be a circuit of \(\cM\) (the existence of which implies that $n \geq r+1$).
Let $d = r+1 - |C|$. 
% with $|C| = r + 1 - d$ for some $0 \leq d < r + 1$. 
Then if \(n=r+1\), we have \(\xB_C=1\), and if \(n>r+1\), we have
$$\xB_C \geq \binom{n - 1 - d}{r + 1 - d}^{-1} - \frac{r + 1 - d}{n} \cdot \binom{n - 1}{r + 1}^{-1}.$$ 
\end{thmA}
\end{comment}

\begin{thmA}\label{intro-thmD}
Let $\cM$ be a matroid of rank $r$ on $n$ elements and let $C$ be a circuit of \(\cM\). Let $d = r+1 - |C|$. Then, 
$$\xB_C \geq \binom{n - d}{|C|}^{-1} = \frac{|C|!(n-d+|C|)!}{(n-d)!}.$$
% \frac{|C|!(n-d+|C|)!}{(n-d)!} = \frac{|C|!(n-r-1)!}{(n-r-1+|C|)!}
% $$\xB_C \geq \binom{n - d}{r + 1 - d}^{-1}.$$
\end{thmA}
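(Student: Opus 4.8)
The plan is to base the proof on an explicit description of when $C$ wins. Model a play of the game by a uniformly random ordering $\pi=(x_1,\ldots,x_n)$ of $E$, let $t$ be the position at which the last element of $C$ is drawn, and set $S:=\{x_1,\ldots,x_{t-1}\}\setminus C$, so that $\{x_1,\ldots,x_t\}=C\cup S$ with $C\cap S=\emptyset$ and $x_t\in C$. Card $C$ is completed exactly at round $t$, and by the no-tie property (B3) it wins if and only if no circuit is completed before round $t$, i.e.\ $(C\cup S)\setminus\{x_t\}$ is independent in $\cM$. A short argument using circuit elimination (C3) shows this holds precisely when $C$ is the unique circuit contained in $C\cup S$ (so $S\in\cI_{C,|S|}$ in the notation of \autoref{thm:mainbetaformula}); and since a circuit has rank $|C|-1$, a rank count turns this into the statement that $S$ is an independent set of the contraction $\cM/C$. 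In summary, $C$ wins if and only if $S$ is independent in $\cM/C$, and establishing this carefully is the one place matroid structure is really used.

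Everything after that is elementary. Fix any basis $B$ of $\cM/C$. Since $r(\cM/C)=r(\cM)-(|C|-1)=d$, we have $B\subseteq E\setminus C$ with $|B|=d$, and every subset of $B$ is independent in $\cM/C$; hence $S\subseteq B$ already forces $C$ to win, so $\xB_C$ is at least the probability that $S\subseteq B$.

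To finish I would compute that probability. Write $D:=(E\setminus C)\setminus B$, so $|D|=(n-|C|)-d$ and therefore $|C|+|D|=n-d$. The event $S\subseteq B$ says exactly that no element of $D$ is drawn before the last element of $C$ --- equivalently, that in $\pi$ all $|C|$ elements of $C$ precede all $|D|$ elements of $D$, the elements of $B$ being irrelevant. The relative order of the $|C|+|D|$ elements of $C\cup D$ is uniformly random, so this has probability $\binom{|C|+|D|}{|C|}^{-1}=\binom{n-d}{|C|}^{-1}$. Combining the two steps gives $\xB_C\geq\binom{n-d}{|C|}^{-1}$, as claimed.

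The main obstacle is simply to state and prove the winning characterization without slipping on a degenerate case --- for instance $C=E$, where $\cM/C$ is trivial, $d=0$, and the bound reads $\xB_C\geq\binom{n}{|C|}^{-1}=1$, consistent with $\xB_C=1$; or matroids in which $\cM/C$ has loops. An alternative that avoids this and stays inside \autoref{thm:mainbetaformula} is to note that $\cI_{C,k}$ is exactly the family of independent $k$-subsets of $\cM/C$, so $|\cI_{C,k}|\geq\binom{d}{k}$ (take $k$-subsets of a fixed basis $B$), which reduces the claim to the binomial identity $\frac{|C|}{n}\sum_{k=0}^{d}\binom{d}{k}\binom{n-1}{|C|+k-1}^{-1}=\binom{n-d}{|C|}^{-1}$; this identity is itself proved by the same ``all of $C$ before all of $D$'' interleaving count.
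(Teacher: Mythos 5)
Your proof is correct, and the second half takes a genuinely different route from the paper's. Both arguments rest on the same key characterization --- that $C$ wins precisely when the set $S$ of non-$C$ elements drawn before $C$ is completed is independent in $\cM/C$ (this is the Claim inside the proof of \autoref{thm:Tuttepolyformula}, and your sketch of it via circuit elimination is the right one). From there the paper proceeds analytically: it plugs the bound $|\cI_{C,k}|\geq\binom{d}{k}$ (obtained, as you suggest, from subsets of a basis of $\cM/C$, though the paper phrases it via a coloop-contraction detour inherited from the sharper \autoref{thmD}) into the timed formula $\xB_C=\sum_t\frac{|C|}{n}|\cI_{C,t-|C|}|\binom{n-1}{t-1}^{-1}$ and evaluates the resulting sum with the binomial identity of \autoref{lem:combo identity}. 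You instead observe that the single event $\{S\subseteq B\}$ --- equivalently, that all of $C$ precedes all of $D=(E\setminus C)\setminus B$ in the random ordering --- is contained in the winning event, and compute its probability directly as the interleaving count $\binom{n-d}{|C|}^{-1}$; as you note, this is in effect a bijective proof of the paper's summation identity. Your version buys a shorter, more conceptual argument that needs neither the timed decomposition nor \autoref{lem:combo identity} nor the coloop reduction, and it makes transparent \emph{which} orderings are being counted; the paper's version buys uniformity, since the same summation machinery upgrades to the sharper, sharp bound of \autoref{thmD} by replacing $\binom{d}{k}$ with $\binom{d+1}{k}$ via Kruskal--Katona, a strengthening your single-basis event cannot see. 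The only thing to tighten in a final write-up is the equivalence ``no circuit completed before round $t$ $\iff$ $C$ is the unique circuit in $C\cup S$,'' which needs the one-line circuit-elimination argument ruling out a second circuit through $x_t$; everything else, including the degenerate cases $d=0$ and loops in $\cM/C$, goes through as you describe.
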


\begin{rem}
The upper bound in \autoref{Upper bound} is sharp. Specifically, for any $n$, $r$, and $d$ with $0 \leq d \leq r < n$, there is a matroid $\cM$ of rank $r$ on $n$ elements, together with a circuit $C$ of size $r + 1 - d$ in $\cM$, which achieves the upper bound
(cf.~\autoref{rem:upper strict}).
The lower bound in \autoref{intro-thmD} is not sharp for \(d>0\); however, it is deduced from a stronger bound (\autoref{thmD}) which is sharp in the same sense as above (cf.~\autoref{rem:lower strict}).
\end{rem}

When $d=0$, the upper bound in \autoref{Upper bound} and the lower bound in \autoref{intro-thmD} coincide, so we obtain:
% As an immediate corollary (with $d=0$), we obtain:
\begin{corA}
If $|C|=r+1$, then $\xB_C=\binom{n}{r+1}^{-1}.$
\end{corA}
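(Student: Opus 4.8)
The plan is simply to combine the two bounds established in \autoref{Upper bound} and \autoref{intro-thmD}, specialized to the case $d=0$. By definition $d=r+1-|C|$, so the hypothesis $|C|=r+1$ is exactly the condition $d=0$.

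First I would invoke \autoref{Upper bound}. With $|C|=r+1$ its right-hand side reads
\[
\frac{\binom{r+1}{|C|}}{\binom{n}{|C|}}=\frac{\binom{r+1}{r+1}}{\binom{n}{r+1}}=\binom{n}{r+1}^{-1},
\]
so $\xB_C\le\binom{n}{r+1}^{-1}$. Next I would invoke \autoref{intro-thmD}. With $d=0$ its right-hand side reads $\binom{n-d}{|C|}^{-1}=\binom{n}{r+1}^{-1}$, so $\xB_C\ge\binom{n}{r+1}^{-1}$. The two inequalities together force $\xB_C=\binom{n}{r+1}^{-1}$, which is the claim. There is no real obstacle here: the entire content is already packaged into the two preceding theorems, and the only thing to verify is the trivial identity $\binom{r+1}{r+1}=1$.

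For completeness I would also record a self-contained derivation from \autoref{thm:mainbetaformula} that does not pass through the bounds. When $|C|=r+1$ the index $t$ in the formula $\xB_C=\sum_{t=|C|}^{r(\cM)+1}\xB_C(t)$ takes only the single value $t=r+1$, so $\xB_C=\xB_C(r+1)$; and $\cI_{C,0}$ consists of the unique empty subset of $E\setminus C$, with $C$ the unique circuit contained in $\emptyset\cup C=C$ by axiom (C2), so $|\cI_{C,0}|=1$. Substituting $t=r+1$, $|C|=r+1$, $|\cI_{C,0}|=1$ into \eqref{eq:introbetaformula1} yields $\xB_C=\tfrac{r+1}{n}\binom{n-1}{r}^{-1}=\binom{n}{r+1}^{-1}$, recovering the same value directly. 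I expect no difficulty with either route; the "hard part," such as it is, is merely keeping the binomial bookkeeping straight.
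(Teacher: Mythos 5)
Your proposal is correct and takes essentially the same approach as the paper, which obtains this corollary precisely by combining the $d=0$ cases of the upper bound (\autoref{thmBC}) and the lower bound (\autoref{cor:thmD-simplified}). Your supplementary self-contained derivation via \autoref{thm:mainbetaformula} is also valid (and the binomial arithmetic checks out), though the paper's own alternative ``direct proof'' proceeds differently, by conditioning on which $r$-element subset of $C$ forms the first $r$ numbers called and applying Bayes' theorem.
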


In other words, bingo cards with the maximum possible size $r+1$ all have the same winning probability.

% I propose we explain the details of sharpness after the proofs, because the context of the proof is required for the reasoning to make sense.
% \todo[inline]{Discuss the sharpness of these bounds.}

\subsection{Monotonicity violations revisited}

Returning to our earlier theme of monotonicity violations, we have the following positive results.
In the statements, we say that \textbf{monotonicity holds} for a matroid $\cM$
% valid collection $\cC$ of matroid bingo cards 
if there is no monotonicity violation among the circuits of $\cM$, i.e., $\xB_{C_1} <  \xB_{C_2}$ whenever $C_1,C_2 \in \cC$ satisfy $|C_1| > |C_2|$.

\medskip

From \autoref{Upper bound} and \autoref{intro-thmD}, we will deduce the following result (\autoref{corA}):

\begin{corA} \label{cor:intro-noviolationsforrplus1}
If $\cM$ is a matroid of rank $r$ and $C_1,C_2$ are circuits of $\cM$ with
$|C_1| = r+1$ and $|C_2| < r+1$, then $\xB_{C_1} <  \xB_{C_2}$. 
In other words, there can be no monotonicity violations involving a circuit of maximum size, $r+1$.
\end{corA}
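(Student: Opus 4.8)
The plan is to bound $\xB_{C_1}$ from above using \autoref{Upper bound} and $\xB_{C_2}$ from below using \autoref{intro-thmD}, and then reduce everything to an elementary comparison of two products of consecutive integers. Before doing so, I would dispose of a degenerate case: if $n = r+1$, then $|C_1| = r+1 = n$ forces $C_1 = E$, and since a circuit is a minimal dependent set, every proper subset of $E$ is independent; in particular no circuit $C_2$ with $|C_2| < r+1$ can exist. So the hypotheses are only satisfiable when $n > r+1$, and I may assume this throughout.

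Write $c = |C_2|$ and $d = r+1-c$, so that $d \geq 1$ and $r+1 = c+d$. Applying \autoref{Upper bound} to $C_1$, and using $\binom{r+1}{\,|C_1|\,} = \binom{r+1}{r+1} = 1$, gives $\xB_{C_1} \leq \binom{n}{r+1}^{-1}$ (equivalently, one may quote the $d=0$ special case recorded just above, which says this is an equality). Applying \autoref{intro-thmD} to $C_2$ gives $\xB_{C_2} \geq \binom{n-d}{c}^{-1}$. Hence it is enough to establish the strict inequality $\binom{n-d}{c} < \binom{n}{r+1}$.

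For this last step I would simply compute the ratio. Since $r+1 = c+d$ and $(n-c-d)! = (n-d-c)!$,
\[
\frac{\binom{n}{c+d}}{\binom{n-d}{c}} \;=\; \frac{n!}{(n-d)!}\cdot\frac{c!}{(c+d)!} \;=\; \prod_{i=1}^{d}\frac{n-d+i}{c+i}.
\]
Because $n > r+1 = c+d$, we have $n-d > c$, so every one of the $d$ factors exceeds $1$; therefore the ratio exceeds $1$, which is exactly $\binom{n-d}{c} < \binom{n}{r+1}$. Combining the three displayed estimates yields
$\xB_{C_2} \geq \binom{n-d}{c}^{-1} > \binom{n}{r+1}^{-1} \geq \xB_{C_1}$, as desired.

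I do not anticipate a real obstacle here: the content is entirely contained in the two cited theorems, and the only point that requires a moment's care is recognizing that the equality case of the binomial comparison (namely $n = r+1$) corresponds precisely to a situation in which the hypotheses of the corollary are vacuous. After that observation, the remaining inequality is a termwise comparison of the products $n(n-1)\cdots(n-d+1)$ and $(c+d)(c+d-1)\cdots(c+1)$, which is immediate.
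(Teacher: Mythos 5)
Your proof is correct and follows essentially the same route as the paper's: bound $\xB_{C_1}$ above by $\binom{n}{r+1}^{-1}$, bound $\xB_{C_2}$ below by $\binom{n-d}{|C_2|}^{-1}$ via \autoref{intro-thmD}, and compare the two binomial coefficients by writing their ratio as a product of $d$ factors each exceeding $1$ when $n > r+1$. The only (immaterial) difference is that you rule out the degenerate case $n=r+1$ up front by observing that $C_1=E$ would force every proper subset to be independent, whereas the paper handles the equality case of the binomial comparison at the end by noting $n=r+1$ would force $\cM$ to have a unique circuit.
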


\begin{rem}
In particular, if $\cM$ is a rank $r$ \textbf{paving matroid} (i.e., a matroid whose circuits all have size $r$ or $r+1$), there are no monotonicity violations. A well-known conjecture in matroid theory \cite[Conjecture 1.6]{Mayhew-et-al-2011} asserts that, from an asymptotic point of view, 100\% of all matroids are paving. We therefore conjecture that 100\% of all matroids have no monotonicity violation.
\end{rem}

We also show that monotonicity always holds when $n \gg r$ (\autoref{thm:threshhold}):

\begin{thmA} \label{thm:intro-threshhold}
For fixed $r$, there exists $N = N(r)$ so that if $\cM$ is a matroid of rank $r$ on $n \geq N$ elements, then 
% $\cC$ is a valid collection of matroid bingo cards of rank $r$ with numbers selected from $\{ 1,\ldots, n \}$ then 
monotonicity holds for $\cM$.
\end{thmA}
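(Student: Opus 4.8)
The plan is to combine the upper bound of \autoref{Upper bound} with the lower bound of \autoref{intro-thmD}. The key observation is that, for a fixed circuit size $c=|C|$ and fixed rank $r$, both bounds are of order $n^{-c}$ as $n\to\infty$: indeed $\binom{r+1}{c}\big/\binom{n}{c}\sim\binom{r+1}{c}\,c!\,n^{-c}$ and $\binom{n-(r+1-c)}{c}^{-1}\sim c!\,n^{-c}$. Consequently, if $C_1,C_2$ are circuits with $|C_1|>|C_2|$, the upper bound for $\xB_{C_1}$ is of strictly smaller order (by at least a factor of $n$) than the lower bound for $\xB_{C_2}$, so for $n$ large it must be genuinely smaller. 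This alone forces monotonicity for $n\gg r$.

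Concretely, I would fix $r$, let $\cM$ be a rank-$r$ matroid on $n$ elements, and take circuits $C_1,C_2$ with $c_1:=|C_1|>|C_2|=:c_2$, so that $1\leq c_2<c_1\leq r+1$. By \autoref{Upper bound} and by \autoref{intro-thmD} (applied to $C_2$ with $d=r+1-c_2$),
\[
\xB_{C_1}\leq\frac{\binom{r+1}{c_1}}{\binom{n}{c_1}},\qquad \xB_{C_2}\geq\binom{n-r-1+c_2}{c_2}^{-1},
\]
so it suffices to prove $\binom{r+1}{c_1}\binom{n-r-1+c_2}{c_2}<\binom{n}{c_1}$ for all sufficiently large $n$. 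The left-hand side is a polynomial in $n$ of degree $c_2$ whose leading coefficient is at most $2^{r+1}$, while $\binom{n}{c_1}\geq (n-r)^{c_1}/(r+1)!$ is bounded below by a polynomial of degree $c_1\geq c_2+1$. Hence the inequality holds once $n$ exceeds a threshold depending only on $r$ and the pair $(c_1,c_2)$; taking $N(r)$ to be the maximum of these finitely many thresholds over all pairs $1\leq c_2<c_1\leq r+1$ gives $\xB_{C_1}<\xB_{C_2}$ for every such pair of circuits in every rank-$r$ matroid on at least $N(r)$ elements, which is exactly the assertion that monotonicity holds.

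There is no serious obstacle; the content is entirely in the order-of-magnitude comparison of the two bounds, after which only a routine polynomial estimate remains. The one point requiring care is that $N(r)$ must depend on $r$ alone and not on the matroid or the particular circuits, and this is automatic because $c_1,c_2$ range over the finite set $\{1,\dots,r+1\}$. If an explicit value is wanted, I would extract it from the sufficient condition $(n-r)^{c_1}>(r+1)!\,2^{r+1}n^{c_2}$; a threshold of the shape $N(r)=O\big((r+1)!\,2^{2r}\big)$ is easily seen to work, but I would not dwell on optimizing the constant.
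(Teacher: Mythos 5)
Your proposal is correct and follows essentially the same route as the paper: both compare the upper bound of \autoref{Upper bound} for the larger circuit against the lower bound of \autoref{intro-thmD} for the smaller one, observe that these are of orders $n^{-c_1}$ and $n^{-c_2}$ respectively with $c_1>c_2$, and take a maximum of thresholds over the finitely many size pairs. The only difference is cosmetic --- you phrase the comparison as a polynomial-degree estimate where the paper computes the limit of the ratio $U_n/L_n$ --- and your explicit threshold of shape $(r+1)!\,2^{O(r)}$ is consistent with (indeed slightly sharper than) the bound the paper records in its remark.
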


\begin{rem}
More precisely, one can make the argument below quantitative and show that it is possible to take 
$N(r) = (r + 2) + (r + 1)! \, \big\lceil \, e^{3r + \frac{25}{6}} \, \big\rceil$
in the statement of the theorem.
% \todo[inline]{Add link or reference to note containing proof?}
\end{rem}

% We make the threshhold $N(r)$ quantitative in Appendix~\ref{Appendix B}.

\subsection{Equitable matroids}

Although axiom (B2) asserts that every player in matroid bingo has a non-zero chance of winning, the game is rarely ``fair''.\footnote{We note, wistfully, that the same can be said for life itself.}
% the game of matroid bingo is \emph{fair}, we mean this in only a weak sense: every player has a non-zero chance of winning. 
In a truly fair game, every player would have an equal chance of winning. We call the matroids for which this holds \textbf{equitable}.\footnote{The reader should not confuse this notion of ``equitable'' with other uses of the term in the matroid theory literature, for example in Mayhew's paper \cite{Mayhew2006}. One can profitably think of Mayhew's notion as \emph{basis equitable} and ours as \emph{circuit equitable}. Our notion of equitability appears to be unrelated to the one considered in \cite{akrami2025matroidsequitable}.} 

We summarize our findings about equitable matroids in the following:
% Unlike the rest of this introduction, we use the language of matroid theory here in order to avoid awkward phrasing.

\begin{thmA}\label{thm:intro-equitable}
\begin{enumerate}
    \item[]
    \item If $C_1,C_2$ are circuits of a matroid $\cM$ and there is an automorphism $\varphi$ of $\cM$ with $\varphi(C_1)=C_2$, then $\xB_{C_1} = \xB_{C_2}$. In particular, if the automorphism group of $\cM$ acts transitively on the set of circuits then $\cM$ is equitable.
    \item There exists an equitable matroid $\cM$ whose automorphism group does not act transitively on the set of circuits.
    \item Uniform matroids and duals of projective geometries over finite fields are equitable.
\end{enumerate}
\end{thmA}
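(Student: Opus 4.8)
The plan is to treat the three parts in increasing order of difficulty: part~(1) is a soft symmetry argument, part~(3) reduces to part~(1) together with standard facts about uniform matroids and projective geometries, and part~(2) — exhibiting an equitable matroid that is \emph{not} circuit-transitive — is where the real work lies.

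\emph{Part (1).} I would model a play of matroid bingo by a uniformly random permutation $\sigma\in S_n$ recording the order in which the balls are drawn, so $\sigma(i)$ is drawn in round $i$ and $\sigma^{-1}(e)$ is the round in which $e$ appears. For a circuit $C$ put $\tau(C,\sigma)=\max_{e\in C}\sigma^{-1}(e)$, the round in which $C$ is completed. Axiom (C3) guarantees that every draw order yields a unique winner, namely the $C\in\cC$ minimizing $\tau(C,\sigma)$, so $\xB_C=\Pr_\sigma[C\text{ wins}]$. Now let $\varphi$ be an automorphism of $\cM$, i.e.\ a bijection $E\to E$ with $\varphi(\cC)=\cC$. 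Since $(\varphi\circ\sigma)^{-1}(\varphi(e))=\sigma^{-1}(e)$ for all $e$, we get $\tau(\varphi(C),\varphi\circ\sigma)=\tau(C,\sigma)$ for every circuit $C$; as $\varphi$ permutes $\cC$, the winner under $\varphi\circ\sigma$ is $\varphi$ applied to the winner under $\sigma$. Because $\sigma\mapsto\varphi\circ\sigma$ is a measure-preserving bijection of $S_n$, this yields $\xB_C=\xB_{\varphi(C)}$ for every $C$, and in particular $\xB_{C_1}=\xB_{C_2}$ whenever $\varphi(C_1)=C_2$. If $\mathrm{Aut}(\cM)$ is transitive on $\cC$ then all $\xB_C$ are equal, and since they sum to $1$ each equals $1/|\cC|$, so $\cM$ is equitable. (One could instead argue from \autoref{thm:mainbetaformula}: $\varphi$ restricts to a bijection $\cI_{C_1,k}\to\cI_{C_2,k}$ for every $k$, and the remaining quantities in \eqref{eq:introbetaformula1} are preserved.)

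\emph{Part (3).} For the uniform matroid $U_{r,n}$ the circuits are exactly the $(r+1)$-subsets of $[n]$, and $\mathrm{Aut}(U_{r,n})=S_n$ acts transitively on them, so equitability follows from part~(1); alternatively every circuit has the maximal size $r+1$, and circuits of maximal size all have winning probability $\binom{n}{r+1}^{-1}$, which again gives the claim (with the precise value). For the dual $\cM=PG(m,q)^\ast$ of a projective geometry, the circuits of $\cM$ are the minimal cocircuits of $PG(m,q)$, i.e.\ the minimal complements of hyperplanes; since every hyperplane of $PG(m,q)$ has the same size $\tfrac{q^m-1}{q-1}$, every cocircuit is already minimal, so the circuits of $\cM$ are precisely the complements of hyperplanes, each of size $q^m$. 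As $\mathrm{Aut}(\cM)=\mathrm{Aut}(PG(m,q))$ contains $P\Gamma L(m+1,q)$ and this group acts transitively on hyperplanes (hence on their complements), part~(1) once more gives equitability.

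\emph{Part (2).} Since part~(1) supplies only a \emph{sufficient} condition, here we must produce a concrete matroid $\cM$ that is equitable yet for which $\mathrm{Aut}(\cM)$ has at least two orbits on $\cC$, and then verify equitability by explicit computation (most conveniently via \autoref{thm:mainbetaformula}). Two natural sources of candidates are: (a) a direct search over small matroids, tabulating the values $\xB_C$; and (b) a direct sum $\cM=\cM_1\oplus\cM_2$ with $\cM_1\not\cong\cM_2$, each having automorphism group transitive on its own circuits, chosen so that the common value of $\xB_C$ over the circuits $\cC_1$ of $\cM_1$ coincides with the common value over the circuits $\cC_2$ of $\cM_2$. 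In case~(b), part~(1) (applied to automorphisms of $\cM$ supported on a single summand) already forces all circuits of $\cM_i$ to share one value $\Pr[\text{winner lies in }\cC_i]/|\cC_i|$, so equitability reduces to the single numerical identity $\Pr[\text{winner lies in }\cC_1]=|\cC_1|/(|\cC_1|+|\cC_2|)$, which must be engineered by a careful choice of the parameters of $\cM_1$ and $\cM_2$. Exhibiting such an example — i.e.\ the fact that no general principle seems to be available and an honest witness has to be found — is the main obstacle in the proof.
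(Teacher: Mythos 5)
Parts (1) and (3) of your proposal are correct and match the paper's argument essentially verbatim: the measure-preserving bijection $\sigma\mapsto\varphi\circ\sigma$ of $S_n$ for part (1), and for part (3) the transitivity of $S_n$ on $(r+1)$-subsets together with the transitivity of $\mathrm{Aut}(\mathrm{PG}(m,q))$ on hyperplanes, combined with the facts that circuits of $\cM$ are complements of hyperplanes of $\cM^*$ and that $\cM$ and $\cM^*$ share an automorphism group.

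Part (2), however, has a genuine gap: the statement is an existence claim, and you never exhibit a witness. You correctly identify the winning strategy --- take $\cM=\cM_1\oplus\cM_2$ with each summand circuit-transitive but $\cM_1\not\cong\cM_2$, and engineer the parameters so the two common values of $\xB_C$ agree --- and you correctly reduce equitability to the single identity $\Pr[\text{winner lies in }\cC_1]=|\cC_1|/(|\cC_1|+|\cC_2|)$. But you stop there, declaring the search for an honest witness to be ``the main obstacle.'' The paper supplies one: $\cM = U_{4,6}\oplus U_{8,9}$, which has six circuits of size $5$ and one of size $9$, so no automorphism can act transitively on $\cC$. The numerical identity you need is then a one-line check rather than a computer search: the unique circuit of $U_{8,9}$ wins if and only if at least two of the six elements of the $U_{4,6}$ summand are drawn after all nine elements of the $U_{8,9}$ summand, i.e.\ if and only if the last two balls drawn both come from the first summand, which happens with probability $\frac{6}{15}\cdot\frac{5}{14}=\frac{1}{7}=\frac{1}{|\cC|}$; the remaining six circuits then share the other $\frac{6}{7}$ equally by symmetry. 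Without some such explicit example, part (2) remains unproved.
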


% It follows from (1) that uniform matroids are equitable, and the dual of a projective geometry over a finite field is equitable.

It is an interesting and seemingly difficult question to classify all equitable matroids.

\section{Proofs}
% {Formulas for winning probabilities in matroid bingo}

% In this section, we derive various useful formulas concerning the probabilities $\xB_C$ and $\xB_C(t)$ discussed in Section~\ref{sec:intro-winningprobabilities}, along with some interesting consequences.

\subsection{Background from matroid theory}
\label{sec:matroids}

% Before we begin analyzing the winning probabilities associated with matroid bingo, we first 
In this section, we review some definitions and facts from matroid theory which will be used later on.
% in the rest of the paper.
% for various proofs throughout this paper.
% bingo game, let's lay out a couple of common matroid definitions which we will be using throughout the paper.

\begin{df*}
Let $\cM$ be a matroid.
\begin{enumerate}
    \item An \textbf{independent set} of $\cM$ is a subset $I$ of $E$ which does not contain any circuit. A subset of $E$ which is not independent is called \textbf{dependent}. The collection of independent sets in a matroid will be denoted $\mathcal{I}$.
    \item A \textbf{basis} of $\cM$ is a maximal independent set $I$, i.e., an independent set $I$ such that $I \cup \{ e \}$ contains a circuit for all $e \in E \backslash I$.
    \item The \textbf{rank function} $r: 2^E \to \mathbb{Z}_{\geq 0}$ of $\cM$ is defined by 
    \[
    r(X) = \max\{|I| : I \subseteq X, I \in \mathcal{I}\}
    \]
    for all subsets $X \subseteq E$. 
    \item The rank of $E$ is called the \textbf{rank of the matroid}, and denoted $r(M)$ (or just $r$ when the context is clear).
\end{enumerate}
\end{df*}

\begin{lemma} 
\cite[Lemma 1.2.1]{oxley2011}
If $\cM$ is a matroid, any two bases have the same cardinality, namely $r(M)$. 
\end{lemma}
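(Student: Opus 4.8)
The plan is to derive the lemma directly from the circuit axioms (C1)--(C3). The one auxiliary fact I need is the \emph{uniqueness of fundamental circuits}: if $I \in \mathcal I$ and $e \in E \setminus I$, then $I \cup \{e\}$ contains at most one circuit. Indeed, any circuit contained in $I \cup \{e\}$ must contain $e$ (otherwise it would lie inside the independent set $I$), so two distinct such circuits $C_1, C_2$ would, by (C3) applied at $e$, produce a circuit $C_3 \subseteq (C_1 \cup C_2) \setminus \{e\} \subseteq I$, contradicting $I \in \mathcal I$. As an immediate corollary, if $I \cup \{e\}$ is dependent with unique circuit $C$, then $(I \cup \{e\}) \setminus \{f\} \in \mathcal I$ for every $f \in C$: any circuit contained in $(I \cup \{e\}) \setminus \{f\}$ would again be a circuit of $I \cup \{e\}$, hence equal to $C$, which is absurd since $f \notin (I \cup \{e\}) \setminus \{f\}$.

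Next I would reduce the lemma to the single assertion that if $J \in \mathcal I$ and $B$ is a basis then $|B| \geq |J|$. Granting this, every basis $B$ satisfies $|B| \geq |J|$ for all $J \in \mathcal I$, hence $|B| \geq r(E)$ where $r(E) = \max\{|I| : I \in \mathcal I\}$; on the other hand $B$ is itself independent, so $|B| \leq r(E)$. Thus every basis has cardinality exactly $r(E) = r(\cM)$, and in particular any two bases have the same cardinality. (Bases exist because $E$ is finite and $\emptyset \in \mathcal I$ by (C1), so a maximum-size independent set exists and is visibly maximal.)

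To prove the reduced assertion, suppose toward a contradiction that some $J \in \mathcal I$ and some basis $B$ satisfy $|B| < |J|$, and among all such pairs choose one with $|B \cap J|$ as large as possible. Since $|B \cap J| \leq |B| < |J|$ there is $e \in J \setminus B$; as $B$ is maximal independent, $B \cup \{e\}$ is dependent, with unique fundamental circuit $C$ satisfying $e \in C$ and $C \setminus \{e\} \subseteq B$. Because $J$ is independent, $C \not\subseteq J$, and since $e \in J$ this forces some $f \in (C \setminus \{e\}) \setminus J \subseteq B \setminus J$. Put $B' := (B \setminus \{f\}) \cup \{e\}$. By the corollary above $B' \in \mathcal I$, and clearly $|B'| = |B| < |J|$ and $|B' \cap J| = |B \cap J| + 1$, so a contradiction with maximality will follow once I show $B'$ is again a basis. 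For this, take any $x \in E \setminus B'$: if $x = f$ then $B' \cup \{x\} = B \cup \{e\}$ is dependent; otherwise $x \notin B \cup \{e\}$, so $B \cup \{x\}$ is dependent with unique fundamental circuit $D$ ($x \in D$, $D \setminus \{x\} \subseteq B$), and if $f \notin D$ then $D \subseteq (B \setminus \{f\}) \cup \{x\} = B' \cup \{x\}$, while if $f \in D$ then $C \neq D$ (as $x \in D \setminus C$) and (C3) applied to $C, D$ at $f$ yields a circuit $C_3 \subseteq (C \cup D) \setminus \{f\} \subseteq (B \cup \{e, x\}) \setminus \{f\} = B' \cup \{x\}$. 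In every case $B' \cup \{x\}$ is dependent, so $B'$ is a basis, giving the contradiction.

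I expect the last step --- checking that the exchanged set $B'$ is still a basis --- to be the only genuinely nontrivial point; everything else is bookkeeping with the definitions and the fundamental-circuit lemma. (Alternatively, one could invoke the cryptomorphism between the circuit and independent-set axiomatizations of a matroid and then quote the one-line deduction from the independence augmentation axiom, but since the paper takes circuits as its definition it seems cleanest to argue directly as above.)
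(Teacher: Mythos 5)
Your proof is correct, and every step checks out: the uniqueness of the fundamental circuit of $I\cup\{e\}$ follows from (C3) exactly as you say, the corollary that deleting any element of that circuit restores independence is right, and the exchange argument (choosing a counterexample pair maximizing $|B\cap J|$ and replacing $B$ by $B'=(B\setminus\{f\})\cup\{e\}$) is carried out carefully, including the only delicate point, namely that $B'$ is again maximal --- your case split on whether $f$ lies in the fundamental circuit $D$ of $x$ with respect to $B$, and the application of (C3) at $f\in C\cap D$ when it does, is exactly what is needed. For comparison: the paper does not prove this lemma at all; it simply cites \cite[Lemma 1.2.1]{oxley2011}, where the result is deduced in one line from the independence-augmentation axiom (I3) after the cryptomorphism from circuits to independent sets has been established. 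Since the paper takes circuits as the primitive notion, your direct argument from (C1)--(C3) is self-contained in a way the citation is not, at the cost of essentially re-deriving the basis exchange property (the paper's \autoref{lem:basisexchange}) inside your maximality argument. Either route is legitimate; yours buys independence from the cryptomorphism machinery, while the textbook route buys brevity by front-loading that machinery.
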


The following result is known as the \textbf{basis exchange property}:

\begin{lemma} \label{lem:basisexchange}
\cite[Lemma 1.2.2]{oxley2011} 
If $B_1$ and $B_2$ are distinct bases of a matroid $\cM$ and $x \in B_1 \backslash B_2$, then there exists an element $y \in B_2 \backslash B_1$ such that $(B_1 \backslash \{x\}) \cup \{y\}$ is also a basis of $\cM$. 
\end{lemma}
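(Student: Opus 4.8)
The plan is to deduce the basis exchange property from the preceding lemma (any two bases of a matroid have the same cardinality $r(\cM)$), using only two formal facts that follow immediately from the definitions. First, for any $X \subseteq E$ the restriction $\cM|X$ --- whose circuits are those circuits of $\cM$ that lie in $X$ --- is again a matroid: axioms (C1) and (C2) are inherited trivially, and for (C3), if $C_1 \neq C_2$ are circuits of $\cM$ contained in $X$ and $e \in C_1 \cap C_2$, the circuit $C_3 \subseteq (C_1 \cup C_2)\setminus\{e\}$ guaranteed by (C3) satisfies $C_3 \subseteq C_1 \cup C_2 \subseteq X$. Its independent sets are exactly the independent subsets of $X$, and its bases are the maximal independent subsets of $X$. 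Second, the rank function is monotone, $r(X) \leq r(Y)$ for $X \subseteq Y$, directly from the formula $r(X) = \max\{|I| : I \subseteq X,\ I \in \cI\}$.

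First I would observe that $B_1 \setminus \{x\}$ is independent (being a subset of the independent set $B_1$) and has cardinality $r(\cM) - 1$. Set $X = (B_1 \setminus \{x\}) \cup B_2$. Since $X$ is finite, I can enlarge $B_1 \setminus \{x\}$ to a \emph{maximal} independent subset $B$ of $X$, by repeatedly adjoining elements of $X$ while independence is preserved.

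Now $B$ is a basis of $\cM|X$, so by the preceding lemma applied to $\cM|X$ all such bases share a common size; since a largest independent subset of $X$ (which has size $r(X)$) is in particular maximal, that common size is $r(X)$, whence $|B| = r(X)$. On the other hand $B_2 \subseteq X$ is independent of size $r(\cM)$, so monotonicity gives $r(\cM) = r(B_2) \leq r(X) \leq r(E) = r(\cM)$, forcing $|B| = r(\cM)$; thus $B$ is a maximum-size independent subset of $\cM$, hence a basis of $\cM$. Finally, since $B \supseteq B_1 \setminus \{x\}$ and $|B| = |B_1 \setminus \{x\}| + 1$, we may write $B = (B_1 \setminus \{x\}) \cup \{y\}$ for a unique $y \in X \setminus (B_1 \setminus \{x\})$; from $B \subseteq (B_1 \setminus \{x\}) \cup B_2$ we get $y \in B_2$, and from $x \notin B_2$ we get $y \neq x$, hence $y \notin B_1$. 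So $y \in B_2 \setminus B_1$ and $(B_1 \setminus \{x\}) \cup \{y\} = B$ is a basis, as required.

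The one genuinely non-formal step --- and the point I expect to be the main obstacle --- is the assertion that a \emph{maximal} independent subset of $X$ automatically attains the \emph{maximum} size $r(X)$; this is exactly what makes the cardinality bookkeeping work. The argument above circumvents it by invoking the preceding lemma for the restricted matroid $\cM|X$. If one instead wanted to stay inside $\cM$, one would need the standard consequence of circuit elimination (C3) that if $A$ is independent and $A \cup \{e\}$ is dependent for every $e$ in some set $S$, then $r(A \cup S) = |A|$, and then apply this to $A = B_1 \setminus \{x\}$ and $S = B_2$. Of course, one may also simply cite \cite[Lemma 1.2.2]{oxley2011}.
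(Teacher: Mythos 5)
Your argument is correct, but it is worth noting that the paper does not prove this lemma at all: it simply cites \cite[Lemma 1.2.2]{oxley2011}, just as it cites the equicardinality of bases (\cite[Lemma 1.2.1]{oxley2011}) immediately before. What you have done is show that, within the paper's circuit-axiomatic setup, the second citation can be \emph{derived} from the first: restrict to $X=(B_1\setminus\{x\})\cup B_2$, check that the restriction $\cM|X$ is again a matroid satisfying (C1)--(C3), apply equicardinality of bases to $\cM|X$ to conclude that the maximal independent set $B\supseteq B_1\setminus\{x\}$ has size $r(X)=r(\cM)$, and read off the exchanged element $y$. Each step checks out: restriction preserves the circuit axioms for exactly the reason you give, independence in $\cM|X$ agrees with independence in $\cM$ for subsets of $X$, a maximum-size independent subset of $X$ is in particular maximal so the common cardinality of bases of $\cM|X$ is $r(X)$, the sandwich $r(\cM)=|B_2|\le r(X)\le r(E)=r(\cM)$ is valid, and the final bookkeeping ($y\in B_2$, $y\ne x$ since $x\notin B_2$, hence $y\in B_2\setminus B_1$, and an independent set of size $r(\cM)$ is a basis) is airtight. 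You also correctly identify the one genuinely non-formal ingredient --- that maximal independent subsets of $X$ attain the maximum size $r(X)$ --- and you obtain it legitimately from the black-box equicardinality lemma rather than reproving an exchange property, so there is no circularity relative to what the paper assumes. What your route buys is a self-contained reduction of one citation to the other; what the paper's route buys is brevity, since both facts are standard and proved in \cite{oxley2011}.
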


\begin{df*}
The \textbf{uniform matroid} of rank $r$ on $n$ elements, denoted $U_{r, n}$, is the matroid with ground set $E = \{1, ..., n\}$ and circuit set $\cC = \{C \subseteq E : |C| = r + 1\}$.
\end{df*}

\begin{df*}
The \textbf{direct sum} of matroids $\cM_1$ and $\cM_2$ on the ground sets $E_1$ and $E_2$ is the matroid $\cM = \cM_1 \oplus \cM_2$ on $E = E_1 \sqcup E_2$ with circuit set $\cC = \cC_1 \cup \cC_2$. 
% The elements of either $\cM_1$ or $\cM_2$ can be relabeled as necessary to make $E_1$ and $E_2$ disjoint.
\end{df*}

\begin{df*}
Given a multigraph $G = (V, E)$, we can create a matroid $\cM = \cM[G]$ on $E$ by letting the circuits of $\cM$ be the simple cycles of $G$.
Matroids of this form are called \textbf{graphic matroids}.
\end{df*}

The matroid depicted in \autoref{fig:cards-1} is not graphic. Indeed, if $\{123\}$ and $\{124\}$ are circuits of a graphic matroid $\cM[G]$, then $\{34\}$ must also be a circuit, but $\{34\}$ is not one of the bingo cards. On the other hand, the matroid depicted in \autoref{fig:cards-3} is graphic; a graphic representation is given in \autoref{fig:graph-1}.

\begin{figure}[h]
    \centering
    \includegraphics[scale = 0.5]{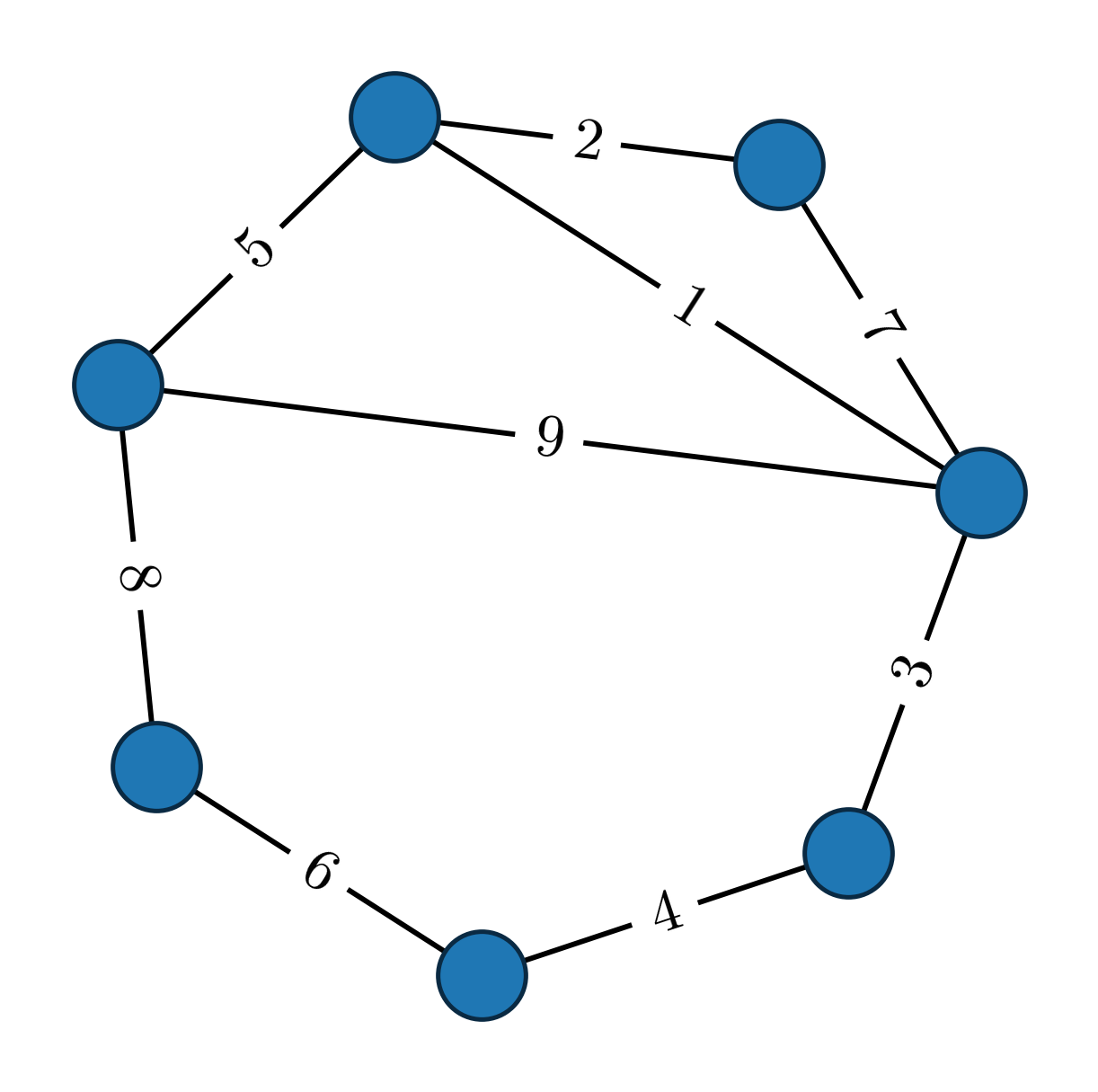}
    \caption{A graphic representation of the matroid depicted in \autoref{fig:cards-3}.}
    \label{fig:graph-1}
\end{figure}

The following result summarizes one particular consequence of the well-known fact (see \cite[Chapter 1]{oxley2011}) that matroids have ``cryptomorphic'' descriptions in terms of circuits, independent sets, bases, or rank functions.

\begin{thm} \label{thm:cryptocor}
A matroid is determined by any of: (a) its collection of independent sets; (b) its collection of bases; or (c) its rank function.
\end{thm}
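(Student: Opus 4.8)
The plan is to show that each of (a), (b), (c) determines the collection $\cC$ of circuits, since in the sense of the Definition above a matroid \emph{is} the pair $(E,\cC)$; two matroids with the same independent sets (resp.\ bases, rank function) will then be literally equal. I would route everything through the collection $\mathcal{I}$ of independent sets, establishing three elementary implications: $\mathcal{I}$ determines $\cC$; the collection of bases $\mathcal{B}$ determines $\mathcal{I}$; and the rank function $r$ determines $\mathcal{I}$. Chaining these gives all three parts.

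\emph{The circuits are recovered from $\mathcal{I}$, and $\mathcal{I}$ is recovered from $r$.} The key observation is that $\cC$ is exactly the set of \emph{minimal dependent subsets} of $E$. Indeed, a circuit $C$ is dependent (it contains itself as a circuit), and by axiom (C2) no proper subset of $C$ is a circuit, hence no proper subset of $C$ is dependent; conversely, any minimal dependent set $D$ contains some circuit $C \subseteq D$, and since $C$ is itself dependent, minimality forces $C = D$. Thus $\cC$ is the set of minimal elements of $2^E \setminus \mathcal{I}$, which settles (a). For (c), from the definition $r(X) = \max\{|J| : J \subseteq X,\ J \in \mathcal{I}\}$ one gets $r(I) \le |I|$ for every $I$, with equality precisely when $I$ is independent; hence $\mathcal{I} = \{\,I : r(I) = |I|\,\}$, and composing with the previous step recovers $\cC$ from $r$.

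\emph{$\mathcal{I}$ is recovered from $\mathcal{B}$.} Here I would prove the equality $\mathcal{I} = \{\,I \subseteq E : I \subseteq B \text{ for some } B \in \mathcal{B}\,\}$. The inclusion ``$\supseteq$'' is immediate, since any subset of a circuit-free set is circuit-free. For ``$\subseteq$'', take an independent set $I$; if it is not maximal, adjoin some element of $E \setminus I$ while preserving independence, and repeat. Since $E$ is finite the process terminates at a maximal independent set, i.e.\ a basis, containing $I$. Combined with the first paragraph, this gives (b).

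I do not expect any serious obstacle; the one point that deserves a moment's care is the equality $\mathcal{I} = \{I : I \subseteq B \in \mathcal{B}\}$ used for (b) --- that is, the fact that every independent set extends to a basis --- but this is just the elementary finiteness argument above, and does not even require the basis exchange property of \autoref{lem:basisexchange}. All of the substantive content is already built into the matroid axioms (C1)--(C3); the theorem is then merely the verification that the standard translations among circuits, independent sets, bases, and rank functions are mutually consistent.
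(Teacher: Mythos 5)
Your proof is correct. Note, though, that the paper does not actually prove this statement: it is stated as a summary of the well-known cryptomorphism results and is cited to \cite[Chapter 1]{oxley2011}, so there is no in-paper argument to compare against. Your chain of translations --- circuits as minimal dependent sets, independence as $r(I)=|I|$, and independent sets as subsets of bases via the finiteness-based extension to a maximal independent set --- is exactly the standard argument the citation points to, and each step is carried out correctly (including your observation that extending an independent set to a basis needs only finiteness, not basis exchange).
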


One use of \autoref{thm:cryptocor} is that it enables us to easily describe the \emph{dual} of a matroid:
% \todo[inline]{Do we need all of these definition? i.e. dual matroids, and some lemmas?}
\begin{df*}
Given a matroid $\cM$ on $E$ with set of bases $\cB$, its \textbf{dual matroid} is the matroid $\cM^*$ on $E$ whose set of bases is 
\[
\cB^* := \{ E \backslash B \; : \; B \in \cB \}.
\]
\end{df*}

By construction, we have $\cM^{**} = \cM$.

\medskip

We will also need the concept of loops and coloops.

\begin{df*}
Let $\cM$ be a matroid.
\begin{enumerate}
\item A \textbf{loop} of $\cM$ is a single-element circuit. Equivalently, a loop is an element of $E$ which is not contained in any basis.
% such that $\{ e \} \in \cC$.
\item A \textbf{coloop} of $\cM$ is an element $e \in E$ not contained in any circuit; equivalently, a coloop is an element of $E$ which is contained in every basis.
\end{enumerate}
\end{df*}

\begin{rem}
It is easy to see that $e$ is a coloop of $\cM$ iff it is a loop of $\cM^*$, and vice-versa.
\end{rem}

\begin{lemma} \label{lem:r+1-circuit-coloopless}
If a matroid $\cM$ of rank $r$ has a circuit $C$ of size $r+1$, then $\cM$ is coloopless.
\end{lemma}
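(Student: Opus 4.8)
The plan is a short argument by contradiction, built on the observation that a circuit of size $r+1$ is covered by bases. First I would record the following elementary consequence of the circuit axioms together with the definition of rank: if $C$ is a circuit, then for every $e \in C$ the set $C \setminus \{e\}$ is independent (a circuit is a minimal dependent set, so all of its proper subsets avoid containing a circuit), and it has cardinality $|C| - 1 = r$. Since no independent set of $\cM$ has more than $r$ elements, an independent set of size $r$ is automatically maximal, hence a basis. Therefore $C \setminus \{e\}$ is a basis of $\cM$ for every $e \in C$.

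Now suppose, for contradiction, that $\cM$ has a coloop $f$. By the first characterization of a coloop, $f$ is contained in no circuit of $\cM$; in particular $f \notin C$. By the second characterization, $f$ is contained in every basis of $\cM$. Picking any element $e_0 \in C$ and setting $B := C \setminus \{e_0\}$, the previous paragraph shows that $B$ is a basis, so $f \in B$; but $B \subseteq C$, whence $f \in C$, contradicting $f \notin C$. Hence $\cM$ has no coloop, i.e., $\cM$ is coloopless.

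I do not anticipate a genuine obstacle here; the only point requiring care is invoking the right cryptomorphic facts, namely that every proper subset of a circuit is independent and that an independent set of size $r = r(\cM)$ is a basis. Both follow immediately from the definitions recalled above together with the lemma that all bases have cardinality $r(\cM)$, so no additional machinery is needed.
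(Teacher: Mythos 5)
Your proposal is correct and takes essentially the same approach as the paper: both hinge on the observation that $C \setminus \{e\}$ is an independent set of size $r$, hence a basis. The only (cosmetic) difference is that the paper concludes directly by exhibiting, for each $x \in E$, a basis omitting $x$, whereas you argue by contradiction using the fact that a coloop lies in no circuit; both routes are valid.
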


\begin{proof}
Since $C$ is a circuit, it is a minimally dependent set. Hence for each $e\in C$,
the set $C\setminus\{e\}$ is independent. But $|C\setminus\{e\}|=r$, so
$B := C\setminus\{e\}$ is a basis of $\mathcal{M}$.

Now let $x\in E$ be arbitrary.
\begin{itemize}
\item If $x\in C$, then the basis $C\setminus\{x\}$ does not contain $x$.
\item If $x\notin C$, pick any $e\in C$; the basis $C\setminus\{e\}$ does not contain $x$ either.
\end{itemize}
Thus, for every $x\in E$, there exists a basis that omits $x$. Therefore no element
lies in every basis, i.e., $\mathcal{M}$ has no coloops.
\end{proof}

\begin{comment}
\begin{proof}
Suppose, for the sake of contradiction, that $\cM$ has at least one coloop $e$ and at least one circuit $C$ of size $r+1$. By the minimality of circuits, $B := C \backslash \{ e \}$ is an independent set of size $r$, and therefore a basis. Since $e$ is a coloop, it is not contained in any basis, so $e \not\in B$. Therefore \(B\cup\{e\}\) must be independent of size \(r+1\) contradicting that the matroid has rank \(r\), 
However, by their definition, coloops cannot be contained in any circuit, as such any existing coloop \(\{e\}\) must be disjoint from any \(B\) 
and as such, \(m=0\). 
\end{proof}
\end{comment}

% Oxley doesn't include restrictionss on what can be deleted or contracted from a matroid (e.g. you are allowed to contract a circuit). So, we can follow that standard, but I think this adds more conditions to the following lemmas.
We will also make use of the fundamental operations of deletion and contraction.

\begin{comment}
\begin{df*}
Let $\cM$ be a matroid and let $e$ be an element of $E$.
\begin{enumerate}
\item The \emph{deletion} $\cM \backslash e$ is the matroid on $E \backslash e$ whose circuits are $\{C : C \in \mathcal{C}, C \not\ni e \}$.
\item The \emph{contraction} $\cM / e$ is the matroid on $E \backslash e$ whose circuits are the non-empty inclusion-minimal elements of $\{C \backslash \{e\} : C \in \mathcal{C}\}$.
\end{enumerate}
\end{df*}

These operations generalize to sets of elements 
% Note that the following generalizations are equivalent to simply 
by iterating the deletion/contraction operations over each element in the set to be removed.

\end{comment}

\begin{df*}
Let $\cM$ be a matroid on $E$ and let $T \subseteq E$.
\begin{enumerate}
\item The \textbf{deletion} $\cM \backslash T$ is the matroid on $E \backslash T$ whose circuits are $\{C \subseteq E \backslash T : C \in \mathcal{C} \}$.
\item The \textbf{contraction} $\cM / T$ is the matroid on $E \backslash T$ whose circuits are the non-empty inclusion-minimal elements of $\{C \backslash T : C \in \mathcal{C}\}$.
\end{enumerate}
\end{df*}

\begin{rem}
By \cite[Statement 3.1.1]{oxley2011}, deletion and contraction are dual operations: $(\cM \backslash T)^* = \cM^* / T$.
\end{rem}

Successive contractions and deletions commute with one another:

\begin{lemma} \label{lem:minors} \cite[Proposition 3.1.25]{oxley2011}
Let $\cM$ be a matroid on $E$, and let $T_1$ and $T_2$ be disjoint subsets of $E$. Then:
\begin{enumerate}
    \item $(\cM\backslash T_1)\backslash T_2 = \cM\backslash (T_1 \cup T_2) = (\cM \backslash T_2) \backslash T_1$.
    \item $(\cM / T_1)/T_2 = \cM/(T_1 \cup T_2) = (\cM/T_2)/T_1$.
    \item $(\cM \backslash T_1)/T_2 = (\cM/T_2)\backslash T_1$.
\end{enumerate}
\end{lemma}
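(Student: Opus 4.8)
The plan is to verify each of the three identities directly from the circuit-level definitions of deletion and contraction, invoking \autoref{thm:cryptocor} and duality only to dispatch part~(2). Write $\cC$ for the set of circuits of $\cM$, and keep in mind throughout that $T_1\cap T_2=\emptyset$.

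Part~(1) is immediate: unwinding the definitions, the circuits of $(\cM\setminus T_1)\setminus T_2$ are those $C\in\cC$ with $C\subseteq E\setminus T_1$ and $C\subseteq E\setminus(T_1\cup T_2)$, i.e.\ those $C\in\cC$ with $C\subseteq E\setminus(T_1\cup T_2)$, and these are exactly the circuits of $\cM\setminus(T_1\cup T_2)$. Since this description is symmetric in $T_1$ and $T_2$, part~(1) follows.

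For part~(3) put $\mathcal{F}_2=\{C\setminus T_2 : C\in\cC\}$ and $\mathcal{F}_1=\{C\setminus T_2 : C\in\cC,\ C\cap T_1=\emptyset\}$, so that by definition the circuits of $(\cM\setminus T_1)/T_2$ are the minimal nonempty members of $\mathcal{F}_1$, while the circuits of $(\cM/T_2)\setminus T_1$ are the minimal nonempty members of $\mathcal{F}_2$ that are disjoint from $T_1$. The crux is to show that $\mathcal{F}_1=\{X\in\mathcal{F}_2 : X\cap T_1=\emptyset\}$: the inclusion $\subseteq$ holds because $C\setminus T_2\subseteq C$, and for $\supseteq$, if $X=C\setminus T_2$ with $C\in\cC$ and $X\cap T_1=\emptyset$ then $C=X\cup(C\cap T_2)\subseteq X\cup T_2$, whence $C\cap T_1\subseteq(X\cup T_2)\cap T_1=\emptyset$ --- here is the only place the disjointness hypothesis enters --- so that in fact $X\in\mathcal{F}_1$. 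Since being disjoint from $T_1$ is inherited by subsets, it then follows by a purely formal argument that the minimal nonempty members of $\mathcal{F}_1$ are exactly the minimal nonempty members of $\mathcal{F}_2$ that are disjoint from $T_1$: if $D$ is minimal in $\mathcal{F}_1$, any strictly smaller nonempty $D'\in\mathcal{F}_2$ is disjoint from $T_1$ (being contained in $D$) and hence lies in $\mathcal{F}_1$, contradicting minimality; conversely, a minimal nonempty $D\in\mathcal{F}_2$ disjoint from $T_1$ lies in $\mathcal{F}_1$ and is a fortiori minimal there because $\mathcal{F}_1\subseteq\mathcal{F}_2$. Thus the two circuit families coincide.

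For part~(2) I would pass to duals. Applying the identity $(\cM\setminus T)^*=\cM^*/T$ recorded just above to the matroid $\cM^*$ gives $(\cM^*\setminus T)^*=\cM^{**}/T=\cM/T$, and dualizing yields $(\cM/T)^*=\cM^*\setminus T$. Consequently $\bigl((\cM/T_1)/T_2\bigr)^*=(\cM/T_1)^*\setminus T_2=(\cM^*\setminus T_1)\setminus T_2$, which by part~(1) equals $\cM^*\setminus(T_1\cup T_2)=\bigl(\cM/(T_1\cup T_2)\bigr)^*$; taking duals once more (using $\cM^{**}=\cM$) gives $(\cM/T_1)/T_2=\cM/(T_1\cup T_2)$, and interchanging the roles of $T_1$ and $T_2$ supplies the remaining equality. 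I do not expect a serious obstacle anywhere: the one step that genuinely needs attention is the identity $\mathcal{F}_1=\{X\in\mathcal{F}_2:X\cap T_1=\emptyset\}$ in part~(3), as it is precisely this that makes ``take minimal nonempty elements'' commute with the two possible orders of deletion and contraction, and it would fail if $T_1$ and $T_2$ were allowed to overlap. An alternative route would be to first extract the rank formulas $r_{\cM\setminus T}(X)=r_\cM(X)$ and $r_{\cM/T}(X)=r_\cM(X\cup T)-r_\cM(T)$ for $X\subseteq E\setminus T$ from the circuit definitions, after which all three identities reduce to a line of arithmetic via \autoref{thm:cryptocor}; but deriving those rank formulas from scratch costs roughly as much as the direct argument above.
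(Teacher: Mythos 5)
Your argument is correct, but note that the paper does not prove this lemma at all: it is quoted verbatim from Oxley (Proposition 3.1.25), so there is no in-paper proof to compare against. What you supply is a self-contained verification from the circuit-level definitions, and each piece holds up. Part (1) is indeed immediate (and does not even need disjointness). In part (3) you correctly isolate the one nontrivial point, namely the identity $\mathcal{F}_1=\{X\in\mathcal{F}_2: X\cap T_1=\emptyset\}$, whose $\supseteq$ direction is exactly where $T_1\cap T_2=\emptyset$ is used, and your observation that disjointness from $T_1$ is inherited by subsets is precisely what makes ``take minimal nonempty elements'' commute with restricting the family. For part (2), routing through the duality $(\cM\setminus T)^*=\cM^*/T$ is legitimate within the paper's logical structure, since that identity is recorded (also citing Oxley) \emph{before} the lemma and is independent of it, and you correctly derive $(\cM/T)^*=\cM^*\setminus T$ from it together with $\cM^{**}=\cM$; the alternative of proving (2) directly at the circuit level would require handling a double minimalization, so the duality detour is a genuine simplification. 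The only thing you bought at a cost is that (2) now rests on a second quoted fact rather than on first principles, but that is consistent with how the paper itself treats these background results.
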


\begin{comment}
\begin{df*}
For a subset \(T\subseteq E\), the \emph{restriction} \(M|T\) is defined as \(M|T=(T,\mathcal C|T)\), where \(\mathcal C|T=\{C\in\mathcal C:C\subseteq T\}\).
% is the restriction of \(\mathcal C\).
 \end{df*}

 \begin{rem}
By definition, we have $M | T = M \backslash (E \backslash T)$.
 \end{rem}
 
 \todo[inline]{Where do we use restriction?}
\end{comment}

\begin{lemma}
\label{lem:basisrankcontract}
\cite[Section 1.3 \& Proposition 3.1.7]{oxley2011} 
\begin{enumerate}
\item The independent sets (resp. bases) of $\cM \backslash T$ are the independent sets (resp. bases) of $\cM$ not containing elements of $T$. If $T$ is independent in $\cM^*$, then $r(\cM \backslash T) = r(\cM)$.
% In particular, if $T$ doesn't contain any coloops, then $r(\cM \backslash T) = r(\cM)$.
% \item The independent sets (resp. bases) of $\cM / T$ are those subsets $X \subseteq E \backslash T$ such that $X \cup T$ is independent in $\cM$ (resp. is a basis of $\cM$). If $T$ is independent in $\cM$, then $r(\cM / T) = r(\cM) - |T|$.

\item If $T$ is independent in $\cM$, the independent sets (resp. bases) of $\cM / T$ are those subsets $X \subseteq E \backslash T$ such that $X \cup T$ is independent in $\cM$ (resp. is a basis of $\cM$). Moreover, $r(\cM / T) = r(\cM) - |T|$.
% The independent sets (resp. bases) of $\cM / T$ are those subsets $X \subseteq E \backslash T$ such that $X \cup B_T$ is independent in $\cM$ (resp. is a basis of $\cM$) where $B_T$ is a basis of $\cM \backslash (E \backslash T)$. If $T$ is independent in $\cM$, then $r(\cM / T) = r(\cM) - |T|$.
% , where $B_T$ is a basis of $\cM|T$. 
% In particular, if $T$ is independent in $\cM$, then $B_T = T$ and hence 
\item On the other hand, if $e$ is a coloop of $\cM$ then $r(\cM \backslash \{ e \})=r(\cM) - 1$, and if $e$ is a loop of $\cM$ then $r(\cM / \{e\}) = r(\cM)$.

\end{enumerate}
\end{lemma}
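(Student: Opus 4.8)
The plan is to derive all three parts by unwinding the circuit descriptions of $\cM \setminus T$ and $\cM / T$ given above, together with the fact that a matroid is determined by its independent sets (\autoref{thm:cryptocor}); this is in essence the argument of \cite[Chapter 3]{oxley2011}. For part (1), note that for $X \subseteq E \setminus T$ a circuit of $\cM$ contained in $X$ is the same thing as a circuit of $\cM \setminus T$ contained in $X$, since a circuit of $\cM$ inside $X$ automatically avoids $T$; hence $X$ is independent in $\cM \setminus T$ if and only if $X$ is independent in $\cM$ and $X \cap T = \emptyset$, and the bases of $\cM \setminus T$ are the maximal such sets. For the rank assertion I would invoke the standard equivalence ``$T$ is independent in $\cM^*$ if and only if $E \setminus T$ spans $\cM$'': assuming it, pick a basis $B$ of $\cM$ with $B \subseteq E \setminus T$; since every independent set of $\cM \setminus T$ is independent in $\cM$ and so has size at most $r(\cM) = |B|$, the set $B$ is a maximum-size independent set of $\cM \setminus T$, hence a basis, so $r(\cM \setminus T) = r(\cM)$ and the bases of $\cM \setminus T$ are exactly the bases of $\cM$ avoiding $T$.

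For part (2), fix $X \subseteq E \setminus T$ with $T$ independent in $\cM$; I claim $X \cup T$ is independent in $\cM$ if and only if $X$ is independent in $\cM / T$. If $X \cup T$ contains a circuit $C$ of $\cM$, then $C \not\subseteq T$ because $T$ is independent, so $\emptyset \neq C \setminus T \subseteq X$ lies in $\{ C' \setminus T : C' \in \cC \}$ and therefore contains a circuit of $\cM / T$. Conversely, any circuit $D$ of $\cM / T$ with $D \subseteq X$ is of the form $C \setminus T$ for some circuit $C$ of $\cM$, and then $C \subseteq D \cup T \subseteq X \cup T$. Passing to maximal sets on both sides then shows that the bases of $\cM / T$ are precisely the sets $B \setminus T$ for $B$ a basis of $\cM$ containing $T$ (such $B$ exist since the independent set $T$ extends to a basis), and hence $r(\cM / T) = |B| - |T| = r(\cM) - |T|$.

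Part (3) concerns the two situations in which the hypotheses of (1) and (2) fail. If $e$ is a coloop then $\{e\}$ is a cocircuit, so $\{e\}$ is not independent in $\cM^*$; moreover every basis of $\cM$ contains $e$, so for any basis $B$ the set $B \setminus \{e\}$ is an independent set of $\cM \setminus \{e\}$ of size $r(\cM) - 1$, while no independent set of $\cM \setminus \{e\}$ can reach size $r(\cM)$ (it would be a basis of $\cM$ missing the coloop $e$); hence $r(\cM \setminus \{e\}) = r(\cM) - 1$. If $e$ is a loop then $\{e\}$ is a circuit, hence not independent in $\cM$; by circuit minimality (C2) no circuit other than $\{e\}$ contains $e$, so the circuits of $\cM / \{e\}$ are exactly the circuits of $\cM$ different from $\{e\}$, which are also the circuits of $\cM \setminus \{e\}$. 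Since every basis of $\cM$ avoids the loop $e$ and therefore remains a basis of $\cM \setminus \{e\}$, we get $r(\cM / \{e\}) = r(\cM \setminus \{e\}) = r(\cM)$.

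The only ingredient that is not a purely formal consequence of the definitions is the equivalence ``$T$ independent in $\cM^*$ if and only if $E \setminus T$ spans $\cM$'' used in part (1). Since the excerpt has not recorded the dual rank formula $r^*(S) = |S| + r(E \setminus S) - r(\cM)$ (with $r^*$ the rank function of $\cM^*$), I would either cite it from \cite{oxley2011} or include its short derivation; the needed equivalence is then the case $S = T$, as $r^*(T) = |T|$ if and only if $r(E \setminus T) = r(\cM)$. I anticipate no other obstacles: everything else is routine manipulation of circuits and maximal independent sets.
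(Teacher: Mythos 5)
The paper offers no proof of this lemma---it is quoted as background from Oxley \cite[Section 1.3 \& Proposition 3.1.7]{oxley2011}---so there is nothing internal to compare your argument against; I can only assess it on its own terms, and it is correct. Your derivation of the independence criteria for $\cM \setminus T$ and $\cM / T$ straight from the circuit descriptions is sound (in part (2), the key point that a nonempty set of the form $C \setminus T$ contains a circuit of $\cM/T$ follows from finiteness, and minimality within the subcollection of sets contained in $C \setminus T$ automatically gives minimality in the whole collection), and the two computations in part (3) are the standard ones. Two small remarks. First, you do not need the dual rank formula for the equivalence ``$T$ independent in $\cM^*$ iff $E \setminus T$ spans $\cM$'': with the paper's definition of $\cM^*$ via complements of bases, $T$ is independent in $\cM^*$ iff $T \subseteq E \setminus B$ for some basis $B$ of $\cM$, which is literally the statement that some basis of $\cM$ avoids $T$. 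Second, your phrasing is in fact more careful than the lemma as printed: the claim that the bases of $\cM \setminus T$ are the bases of $\cM$ avoiding $T$ genuinely requires $T$ to be coindependent (if $T$ contains a coloop, no basis of $\cM$ avoids $T$, yet $\cM \setminus T$ still has bases), and your proof correctly states the unconditional version (maximal independent sets avoiding $T$) before specializing under the hypothesis. This imprecision in the statement is harmless for the paper's applications.
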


\begin{cor} \label{cor:contracting-a-circuit}
If $C$ is a circuit of $\cM$ then $r(\cM / C) = r(\cM)-|C|+1$.
\end{cor}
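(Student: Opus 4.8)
The plan is to derive this from \autoref{lem:basisrankcontract}(2) after observing that contracting a circuit reduces to contracting a basis of that circuit. Specifically, fix any element $e \in C$ and set $B := C \setminus \{e\}$. Since $C$ is a circuit, it is minimally dependent, so $B$ is independent in $\cM$ with $|B| = |C|-1$. The key structural claim is that $\cM / C = (\cM / B) \setminus \{e\}$, and moreover that $e$ is a loop in $\cM/B$.

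First I would justify $e$ being a loop in $\cM/B$: since $B \cup \{e\} = C$ is a circuit of $\cM$, the singleton $\{e\}$ is a minimal nonempty set of the form $C' \setminus B$ with $C' \in \cC$, so $\{e\}$ is a circuit of $\cM/B$, i.e.\ $e$ is a loop. Next, deleting a loop from a matroid does not change its rank (this follows from \autoref{lem:basisrankcontract}(1), taking $T = \{e\}$, which is independent in $(\cM/B)^*$ since a loop of $\cM/B$ is a coloop of $(\cM/B)^*$ and coloops are independent in the dual). Hence $r(\cM/C) = r((\cM/B)\setminus\{e\}) = r(\cM/B)$. Finally, since $B$ is independent in $\cM$, \autoref{lem:basisrankcontract}(2) gives $r(\cM/B) = r(\cM) - |B| = r(\cM) - (|C|-1)$, which is the desired formula.

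It remains to verify the identity $\cM/C = (\cM/B)\setminus\{e\}$, but by \autoref{lem:minors}(3) (or directly \autoref{lem:minors}(2)) we have $\cM/C = \cM/(B \cup \{e\}) = (\cM/B)/\{e\}$, and since $e$ is a loop of $\cM/B$, contracting it equals deleting it by \autoref{lem:basisrankcontract}(3) combined with the fact that loop contraction and deletion coincide — alternatively, one can simply cite \autoref{lem:minors}(2) to get $r(\cM/C) = r((\cM/B)/\{e\}) = r(\cM/B) + 0$ using part (3) of \autoref{lem:basisrankcontract} for the loop $e$. Either way the rank bookkeeping closes.

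The main obstacle is purely cosmetic: making sure the chosen chain of lemmas about loops, deletion, and contraction is invoked in a logically clean order, since the excerpt states several overlapping facts (the loop/coloop duality remark, \autoref{lem:minors}, and the three parts of \autoref{lem:basisrankcontract}) and one must pick a minimal consistent path rather than circularly re-deriving rank invariance. No genuinely new idea is needed; the content is entirely the reduction ``contracting a circuit $=$ contracting one of its bases and then discarding a loop.''
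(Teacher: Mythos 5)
Your proposal is correct and follows essentially the same route as the paper's proof: contract the independent set $B = C \setminus \{e\}$, apply \autoref{lem:basisrankcontract}(2) to get $r(\cM/B) = r(\cM) - (|C|-1)$, observe that $e$ becomes a loop of $\cM/B$, and conclude via \autoref{lem:basisrankcontract}(3) that contracting (equivalently, deleting) the loop $e$ leaves the rank unchanged. The brief detour through deletion of the loop is an unnecessary but harmless variant of the paper's direct use of $\cM/C = (\cM/B)/\{e\}$.
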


\begin{proof}
Let $I = C \backslash \{ e \}$ for some $e \in C$. Since $C$ is a minimal dependent set, $I$ is independent in $\cM$. 
We have $\cM / C = (\cM / I) / \{ e \}$. By \autoref{lem:basisrankcontract}(2), $r(\cM / I)=r(\cM)-(|C|-1)$.
Moreover, one verifies easily that $e$ is a loop of $\cM / I$, and thus $r(\cM / C)=r(\cM / I)$ by \autoref{lem:basisrankcontract}(3).
\end{proof}

\begin{comment}
\begin{lemma}
Let $\cM$ be a matroid with circuit set $\cC$ and let $C \in \cC$. Then the circuit set of $\cM/C$ is 
$\{C' \cap (E \backslash C) : C' \in \cC, C' \neq C \}$.
\end{lemma}

\begin{proof}
We write $C' - C$ as shorthand for $C' \cap (E \backslash C)$.
By definition, $\cC(\cM/C)$ is the set of non-empty inclusion-minimal elements of $\{C' - C : C' \in \cC\}$. 
So it suffices to show that there are no proper containments between elements of $S := \{C' - C : C \neq C' \in \cC\}$.
% Clearly, $S = \{C' - C : C \neq C' \in \cC\}$ is the set of non-empty elements of this set, so all that must be shown is that $C' - C$ is inclusion-minimal for all $C' \neq C$. 

Assume, for the sake of contradiction, that $C_2 - C \subsetneq C_1 - C$ for two circuits $C_1, C_2 \in \cC$, and take $C_2 - C$ to be inclusion-minimal. 
% and take $C_2 - C$ to be inclusion-minimal. 
By (C2), $C_2 \ \cancel{\subseteq} \ C_1$, which implies that there exists $e \in (C_2 - C_1) \cap C$.
% $C_2 - C_1$ is non-empty and contained in $C$. Picking $e \in C_2 - C_1$ 
By the circuit elimination axiom (C3) applied to $e \in C \cap C_2$, there must be another circuit $C_3 \subseteq (C \cup C_2) - \{e\}$. But then
$C_3 - C \in S$ and $C_3 - C \subseteq C_2 - C - \{e\}$ which contradicts the inclusion-minimality of $C_2 - C$.
\todo[inline]{Since $e \in C$, we have $C_2 - C - \{e\} = C_2 - C$, so I don't see the contradiction here.}
\end{proof}

\begin{cor} \label{cor:coloopless}
If $\cM$ is a coloopless matroid on $E$ which contains circuit $C$, then $\cM/C$ is also coloopless.
\end{cor}

\begin{proof}
If $e$ is an element of $E(\cM/C) = E - C$, then as $\cM$ is coloopless it is contained in some circuit $C'$. By the previous lemma, $C' - C$ is a circuit of $\cM/C$ which will contain element $e$, and hence $e$ is not a coloop.
\end{proof}
\end{comment}

\begin{lemma} \label{lem:coloopless}
If $\cM$ is a coloopless matroid on the ground set $E$ and $e\in E$, then the contraction $\cM/\{e\}$ is also coloopless.
\end{lemma}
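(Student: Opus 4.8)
The plan is to deduce this from duality, reducing it to the essentially trivial observation that deleting an element from a loopless matroid leaves a loopless matroid. I would use two facts recorded in the remarks above: (i) an element $x$ is a coloop of a matroid $\cN$ if and only if $x$ is a loop of $\cN^*$, so that $\cN$ is coloopless if and only if $\cN^*$ is loopless (i.e.\ has no single-element circuit); and (ii) deletion and contraction are exchanged by duality, which together with $\cM^{**}=\cM$ yields $(\cM/\{e\})^* = \cM^*\setminus\{e\}$ (from $(\cM\setminus T)^*=\cM^*/T$, replace $\cM$ by $\cM^*$ and take duals).

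Granting these, here are the steps I would carry out. First I would translate the hypothesis: since $\cM$ is coloopless, $\cM^*$ is loopless. Second I would observe that, by the definition of deletion, the circuits of $\cM^*\setminus\{e\}$ are precisely the circuits of $\cM^*$ not containing $e$; in particular every circuit of $\cM^*\setminus\{e\}$ is already a circuit of $\cM^*$, so $\cM^*\setminus\{e\}$ has no single-element circuit, i.e.\ it is loopless. Third I would dualize back: $\cM/\{e\} = (\cM^*\setminus\{e\})^*$ is the dual of a loopless matroid, hence coloopless by (i) applied to $\cM/\{e\}$. This finishes the argument.

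I do not anticipate a genuine obstacle here; the only points requiring care are invoking the duality dictionary correctly (deriving $(\cM/\{e\})^*=\cM^*\setminus\{e\}$ from $(\cM\setminus T)^*=\cM^*/T$ and $\cM^{**}=\cM$) and remembering to apply the definition of deletion to $\cM^*$ rather than to $\cM$. For readers who prefer to avoid duality, one can argue directly: fix $f\in E\setminus\{e\}$. If $e$ is a loop of $\cM$, then $\cM/\{e\}=\cM\setminus\{e\}$, and any circuit of $\cM$ meeting $f$ must avoid $e$ (otherwise it would properly contain the circuit $\{e\}$, contradicting (C2)), so it survives as a circuit of $\cM/\{e\}$ witnessing that $f$ is not a coloop. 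If $e$ is not a loop, then $\{e\}$ is independent in $\cM$, and since $f$ is not a coloop of $\cM$, \autoref{lem:basisrankcontract}(1) gives $r(\cM\setminus\{f\})=r(\cM)$; extending $\{e\}$ to a basis $B$ of $\cM\setminus\{f\}$ yields a basis of $\cM$ containing $e$ but not $f$, so $B\setminus\{e\}$ is a basis of $\cM/\{e\}$ avoiding $f$ by \autoref{lem:basisrankcontract}(2). Either way $f$ is not a coloop of $\cM/\{e\}$, and since $f$ was arbitrary, $\cM/\{e\}$ is coloopless. I would present the duality proof as the main one, for its brevity.
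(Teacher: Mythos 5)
Your proposal is correct, and both of your arguments differ from the one in the paper. The paper proves the lemma by contradiction using the basis exchange property: assuming $f$ is a coloop of $\cM/\{e\}$, it identifies (via \autoref{lem:basisrankcontract}) the bases of $\cM/\{e\}$ with the sets $B\setminus\{e\}$ for bases $B$ of $\cM$ containing $e$, takes a basis $B_0$ of $\cM$ avoiding $f$ (which exists since $\cM$ is coloopless), and then uses \autoref{lem:basisexchange} on the fundamental circuit of $e$ with respect to $B_0$ to swap $e$ into $B_0$ without introducing $f$, contradicting the assumption. Your duality argument is shorter and, in a sense, cleaner: it uses only the two facts recorded in the paper's remarks --- that coloops of $\cM$ are loops of $\cM^*$ and that $(\cM\setminus T)^*=\cM^*/T$ --- together with the observation that circuits of a deletion are circuits of the original matroid, so deletion preserves looplessness; no exchange argument is needed. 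Your derivation of $(\cM/\{e\})^*=\cM^*\setminus\{e\}$ from the stated remark and $\cM^{**}=\cM$ is valid. Your backup direct argument is also correct, and it has the small virtue of treating the case where $e$ is a loop separately; the paper's proof (like the second case of your direct argument) tacitly relies on $\{e\}$ being independent when it describes the bases of $\cM/\{e\}$ as $B\setminus\{e\}$ for bases $B\ni e$, a hypothesis that \autoref{lem:basisrankcontract}(2) requires. Your duality proof sidesteps this issue entirely, which is another point in its favor.
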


\begin{proof}
Suppose, for the sake of contradiction, that $f\in E\setminus\{e\}$ is a coloop of $\cM/\{e\}$.
By \autoref{lem:basisrankcontract}, the bases of $\cM/\{e\}$ are exactly the sets $B\setminus\{e\}$, where
$B$ ranges over the bases of $\cM$ with $e\in B$. Thus $f$ being a coloop of $\cM/\{e\}$
means:
\begin{equation}\label{eq:all-contain-f}
\text{for every basis $B$ of $M$ with $e\in B$, we have $f\in B$.}
\end{equation}

Since $\cM$ is coloopless, $f$ is not a coloop of $\cM$, so there exists a basis
$B_0$ of $\cM$ with $f\notin B_0$. If $e\in B_0$, then $B_0\setminus\{e\}$ is a
basis of $\cM/\{e\}$ that does not contain $f$, contradicting our assumption that $f$ is a coloop
of $\cM/\{e\}$. Hence we may assume $e\notin B_0$.

Consider $B_0\cup\{e\}$; this set is dependent, so it contains a circuit
$C\subseteq B_0\cup\{e\}$ with $e\in C$. Since $f\notin B_0$ and $f\ne e$, we
have $f\notin C$. By \autoref{lem:basisexchange} (the basis exchange property), choosing any $x\in C\cap B_0$
yields a basis
\[
B_1 \;=\; (B_0-\{x\})\cup\{e\}
\]
of $M$. This basis $B_1$ contains $e$ and, because $f\notin B_0$ and $f\ne e$,
also satisfies $f\notin B_1$. This contradicts \eqref{eq:all-contain-f}.
Therefore no such $f$ exists, and $M/\{e\}$ is coloopless.
\end{proof}

Intuitively, numbers that do not appear on any bingo cards have no effect on the winning probabilities of cards in matroid bingo. So we may as well ignore them. The following result, whose proof is a routine translation of this intuition (and is therefore omitted), makes this precise:

\begin{lemma} \label{coloops-don't-change-beta-values}
If $e$ is a coloop in $\cM$ and $C$ is a circuit of $\cM$ (necessarily not containing $e$), let $C'$ be the corresponding circuit of $\cM / e $.
Then $\beta_C=\beta_{C'}$.
\end{lemma}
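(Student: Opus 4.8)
The plan is to argue directly from the probabilistic meaning of $\beta_C$. Model a play of matroid bingo on $\cM$ by a uniformly random ordering $\pi=(\pi_1,\dots,\pi_n)$ of $E=[n]$, where $\pi_i$ is the $i$-th ball drawn, and for a circuit $D\in\cC$ set $\tau_\pi(D):=\max\{\,i:\pi_i\in D\,\}$, the round in which the card $D$ is completed. As explained after axiom (C3) in the introduction, the circuit axioms force $D\mapsto\tau_\pi(D)$ to have a \emph{unique} minimizer: if $\tau_\pi(D_1)=\tau_\pi(D_2)=i$ for distinct circuits with $i$ minimal, then $\pi_i\in D_1\cap D_2$ and $(D_1\cup D_2)\setminus\{\pi_i\}\subseteq\{\pi_1,\dots,\pi_{i-1}\}$, so (C3) produces a circuit $D_3\subseteq(D_1\cup D_2)\setminus\{\pi_i\}$ with $\tau_\pi(D_3)\le i-1$, a contradiction. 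This unique minimizer is the winner, so $\beta_C=\Pr_\pi[\,C=\operatorname{argmin}_{D}\tau_\pi(D)\,]$, and similarly for $\cM/e$.

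First I would record that, since $e$ is a coloop, it lies in no circuit of $\cM$; hence $D\setminus\{e\}=D$ for every $D\in\cC$, and by (C1)--(C2) the circuit set of $\cM/e$ equals $\cC$. In particular the circuit $C'$ of $\cM/e$ corresponding to $C$ is $C$ itself, so both sides of the claimed equality make sense. Next, for an ordering $\pi$ of $[n]$, let $\pi'$ be the ordering of $E':=[n]\setminus\{e\}$ obtained by deleting $e$ from the list $\pi$. If $e=\pi_j$, the ``new position'' map $\phi\colon[n]\setminus\{j\}\to[n-1]$ is strictly increasing, and since no circuit contains $e$ we have $\tau_{\pi'}(D)=\phi(\tau_\pi(D))$ for every $D\in\cC$; as $\phi$ is order preserving, $\operatorname{argmin}_D\tau_{\pi'}(D)=\operatorname{argmin}_D\tau_\pi(D)$. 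Thus the $\cM$-game run with $\pi$ and the $\cM/e$-game run with $\pi'$ always have the same winner. Finally, deleting a fixed element from a uniformly random permutation of $[n]$ yields a uniformly random permutation of $E'$, so
\[
\beta_C=\Pr_\pi[\,C\text{ wins in }\cM\,]=\Pr_{\pi'}[\,C\text{ wins in }\cM/e\,]=\beta_{C'}.
\]

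I do not expect a genuine obstacle here; the only points needing care are to pin down the probability model precisely enough to justify ``winner $=$ unique $\operatorname{argmin}$ of $\tau_\pi$'' (which uses (C3)) and ``the winner is unchanged under $\pi\mapsto\pi'$'' (which uses that $e$ lies in no circuit), together with the standard fact about deleting an entry from a uniform random permutation. As an alternative for readers who prefer to avoid the probability space, one can compute directly from \autoref{thm:mainbetaformula}: splitting a subset $S\subseteq E\setminus C$ according to whether $e\in S$ gives $|\cI^{\cM}_{C,k}|=|\cI^{\cM/e}_{C,k}|+|\cI^{\cM/e}_{C,k-1}|$, and since $r(\cM/e)=r(\cM)-1$ the two expressions for $\beta$ then agree term by term in $t$ once one verifies the elementary identity $\tfrac1n\bigl(\binom{n-1}{p}^{-1}+\binom{n-1}{p+1}^{-1}\bigr)=\tfrac1{n-1}\binom{n-2}{p}^{-1}$ with $p=t-1$.
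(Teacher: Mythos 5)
Your proof is correct, and it is exactly the ``routine translation'' that the paper alludes to when it omits the proof of this lemma: you formalize the game as a uniformly random ordering, show via (C3) that the winner is the unique minimizer of the completion times, and observe that deleting the coloop $e$ from the ordering preserves the winner while inducing the uniform distribution on orderings of $E\setminus\{e\}$. Your alternative verification via \autoref{thm:mainbetaformula} (using $|\cI^{\cM}_{C,k}|=|\cI^{\cM/e}_{C,k}|+|\cI^{\cM/e}_{C,k-1}|$ and the binomial identity) also checks out, and is a nice independent confirmation.
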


\begin{rem}
Note that Lemma~\ref{coloops-don't-change-beta-values} only applies to \emph{untimed} winning probabilities. Fixing a time $t$, it is frequently the case that $\beta_C(t) \neq \beta_{C'}(t)$.  
\end{rem}

We will also make use of the \emph{Tutte polynomial}, a celebrated invariant of matroids that satisfies a useful deletion-contraction recurrence.

\begin{df*}
The \textbf{Tutte polynomial} \(T_{\cM}(x,y)\) of a matroid \(\cM\) is defined as:
\begin{equation} \label{eq:tutte-def}
T_\cM(x, y) = \sum_{A \subseteq E} (x-1)^{r(E) - r(A)} (y-1)^{|A| - r(A)}.
\end{equation}
\end{df*}

% The following two lemmas need to have their references updated to the most recent edition of Welsh's Matroid Theory.
\begin{lemma} \label{lem:coloop tutte}
\cite[p.267]{welsh2010matroid}
Let $\cM$ be a matroid on $E$ and let $e \in E$.
\begin{enumerate}
    \item If $e$ is neither a loop nor coloop of $\cM$, then
$$T_{\cM}(x, y) = T_{\cM \backslash e}(x, y) + T_{\cM/e}(x, y).$$
    \item If $e$ is a coloop of $\cM$, then
$$T_{\cM}(x, y) = xT_{\cM \backslash e}(x, y).$$
    \item If $e$ is a loop of $\cM$, then
$$T_{\cM}(x, y) = yT_{\cM / e}(x, y).$$
\end{enumerate}
\end{lemma}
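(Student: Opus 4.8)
The plan is to prove all three identities in parallel, working directly from the subset-expansion definition \eqref{eq:tutte-def}, by splitting the sum over $A \subseteq E$ according to whether $e \in A$ or $e \notin A$. Write $r$ for the rank function $r_\cM$ of $\cM$. The two ingredients are the standard rank identities for the minors: for every $A \subseteq E \setminus \{e\}$ one has $r_{\cM \backslash e}(A) = r(A)$ (immediate from \autoref{lem:basisrankcontract}(1)) and $r_{\cM/e}(A) = r(A \cup \{e\}) - r(\{e\})$ (\cite[Proposition 3.1.6]{oxley2011}). Applied with $A = E \setminus \{e\}$ these specialize to $r_{\cM \backslash e}(E \setminus \{e\}) = r(E \setminus \{e\})$ and $r_{\cM/e}(E \setminus \{e\}) = r(E) - r(\{e\})$.

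With these in hand, I would rewrite each block of the sum. For the terms with $e \notin A$, the summand is $(x-1)^{r(E) - r(A)}(y-1)^{|A| - r(A)}$ with $r(A) = r_{\cM \backslash e}(A)$; replacing $r(E)$ by $r_{\cM \backslash e}(E\setminus\{e\})$ or $r_{\cM \backslash e}(E\setminus\{e\})+1$ (according to whether $e$ is not, or is, a coloop) exhibits this as a power of $x-1$ times the corresponding summand of $T_{\cM \backslash e}$. For the terms with $e \in A$, write $A = A' \cup \{e\}$ with $A' \subseteq E \setminus \{e\}$; then $|A| = |A'| + 1$, $r(A) = r_{\cM/e}(A') + r(\{e\})$, and $r(E) = r_{\cM/e}(E \setminus \{e\}) + r(\{e\})$, and substituting shows the summand is a power of $y-1$ times the corresponding summand of $T_{\cM/e}$.

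It then remains to plug in the values $r(\{e\}) \in \{0,1\}$ and $r(E) - r(E\setminus\{e\}) \in \{0,1\}$ in the three cases. (i) If $e$ is neither a loop nor a coloop, $r(\{e\}) = 1$ and $r(E\setminus\{e\}) = r(E)$; the exponent shifts cancel in both blocks and one gets $T_\cM = T_{\cM\backslash e} + T_{\cM/e}$. (ii) If $e$ is a coloop, $r(\{e\}) = 1$, $r(E\setminus\{e\}) = r(E) - 1$, and $\cM \backslash e = \cM/e$ as matroids on $E \setminus \{e\}$; the $e\notin A$ block becomes $(x-1)T_{\cM\backslash e}$ and the $e \in A$ block becomes $T_{\cM\backslash e}$, giving $xT_{\cM\backslash e}$. (iii) If $e$ is a loop, $r(\{e\}) = 0$ and $r(E\setminus\{e\}) = r(E)$; the $e\notin A$ block becomes $T_{\cM/e}$ and the $e\in A$ block becomes $(y-1)T_{\cM/e}$, giving $yT_{\cM/e}$ (alternatively, case (iii) is formally dual to case (ii) via $e$ being a coloop of $\cM^*$). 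The only real work here is the careful bookkeeping of the rank exponents; there is no conceptual obstacle, since everything drops out of the definition once the correct rank identities for deletion and contraction have been substituted.
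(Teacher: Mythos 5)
Your argument is correct and complete: splitting the subset expansion \eqref{eq:tutte-def} over $e \in A$ versus $e \notin A$, substituting the rank identities $r_{\cM \backslash e}(A) = r(A)$ and $r_{\cM/e}(A') = r(A' \cup \{e\}) - r(\{e\})$, and then evaluating $r(\{e\}) \in \{0,1\}$ and $r(E) - r(E \setminus \{e\}) \in \{0,1\}$ in the three cases is exactly the standard textbook derivation of deletion--contraction. The paper itself offers no proof --- it quotes the lemma from \cite[p.267]{welsh2010matroid} --- so there is nothing to compare against; the one point worth making explicit in your case analysis is that when $e$ is a loop or a coloop one has $\cM \backslash e = \cM / e$, which you state in case (ii) and use silently in case (iii).
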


\begin{comment}
\begin{lemma}\label{lem:basis count recurrence} \cite[p.268]{welsh2010matroid}
The number of bases in a matroid $\cM$ is given by $T_M(1, 1)$. 
% And by the previous recurrence, 
By \eqref{eq:tutte-def}, if $e \in E$ is neither a loop nor coloop, then
$$b(\cM) = b(\cM \backslash e) + b(\cM/e).$$
\end{lemma}

\todo[inline]{Do we still need the above lemma?}
\end{comment}

\begin{rem} 
By \cite[p.267]{welsh2010matroid}, we have $T_{\cM^*}(x,y) = T_{\cM}(y,x)$.
\end{rem}

Finally, we will need the following basic definitions.

\begin{df*} \cite[Proposition 4.1.3]{oxley2011}
A matroid $M$ on $E$ is \textbf{connected} if, for every pair of distinct elements of $E$, there is a circuit containing both.
\end{df*}

\begin{comment}
Given a matroid $\cM$ on the ground set $E$, define a graph $\Gamma$ whose vertex set is $E$ and whose edges correspond to pairs $(e_1,e_2)$, where $e_1,e_2 \in E$ and there exists a circuit $C$ of $\cM$ containing both $e_1$ and $e_2$. The matroid $\cM$ is \textbf{connected} if $\Gamma$ is a connected graph.
% $S$, define a graph $G = (V, E)$ such that $V = S$ and $(e_1, e_2) \in E$ iff there exists a circuit $c \in \cC$ such that $e_1, e_2 \in c$. We say that $\cM$ is a \textbf{connected matroid} if $G$ is a connected graph.
\end{comment}

% Finally, we recall the definition of an isomorphism between matroids:

\begin{df*}
A \textbf{matroid isomorphism} from $\cM_1$ to $\cM_2$ is a bijection $f: E_1 \rightarrow E_2$ between the ground sets of $\cM_1$ and $\cM_2$, respectively, such that 
the induced map $g: \cC_1 \rightarrow \cC_2$ between circuit sets is also a bijection. If there exists a matroid isomorphism from $\cM_1$ to $\cM_2$, we say that $\cM_1$ and $\cM_2$ are \textbf{isomorphic}.
% called the induced map between the circuit sets of the matroids, and $g(C) = \{f(e) : e \in C\}$ for all $C \in \cC_1$. 
If $\cM_1 = \cM_2 = \cM$, then $f$ is called an \textbf{automorphism} of $\cM$.
\end{df*}

\begin{df*}
A \textbf{minor} of $\cM$ is any matroid isomorphic to a matroid $\cM'$ obtained from $\cM$ by a sequence of deletions and contractions.   
\end{df*}

% And if for any two circuits $C_1, C_2 \in \cC$ there exists an automorphism $f$ of a matroid $\cM$ such that $g(C_1) = C_2$ where $g$ is the induced map, then we call this matroid \textbf{transitive}.

\noindent
\textbf{Notation.} For a polynomial $f$, let $[x^k] f(x)$ denote the coefficient of $x^k$ in $f(x)$.

\subsection{Formulas for winning probabilities}

\medskip

% Brennan: The following result is a reformulation of my first beta formula. Theorem A is the second one I found and the one Patrik started with, it is based on the inclusion-exclusion principle.

%The following result is a reformulation of 
%Theorem~\ref{thm:introbetaformula} using the Tutte polynomial (cf.~Remark~\ref{rem:Tuttepoly}):

% Now, we return to strategy. Let us refer to the probability that some bingo card $C$ wins the game as \(\xB_C\), and the probability that \(C\) wins at a specific round \(t\) as \(\xB_C(t)\). The most natural way to define this distribution is considering all possible permutations of the ground set, counting the number of times each circuit wins, and dividing this by the number of permutations. Using the database of matroids up to and including 9 elements from (rly need to make a bib - ref here), this paper uncovers the depth and structure of the distribution through large scale analysis and pattern seeking.

\begin{thm}
Given a matroid $\cM$ on $E = [n]$ and a circuit $C$ of $\cM$,
$$\xB_C = \sum_{k = 0}^{|\cC| - 1} \left( (-1)^k \cdot \sum_{\substack{S \subseteq \cC \backslash \{C\}, \\ |S| = k}} \frac{|C|}{|(\cup_{C' \in S} C') \cup C|} \right).$$
\end{thm}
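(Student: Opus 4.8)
The plan is to realize $\xB_C$ as a probability over a uniformly random draw order and then apply inclusion--exclusion; the alternating sum over subsets $S\subseteq\cC\setminus\{C\}$ appearing in the statement is exactly what inclusion--exclusion produces.

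First I would set up the probabilistic model. A play of the game is a uniformly random permutation $\pi$ of $E=[n]$, where $\pi(e)$ records the round in which ball $e$ is drawn. For a nonempty $A\subseteq E$, write $\tau(A)=\max_{e\in A}\pi(e)$ for the round in which the last element of $A$ appears; thus each circuit $C'$ is completed in round $\tau(C')$. By definition, $C$ wins precisely when it is completed strictly before every other card, i.e. when $\tau(C)<\tau(C')$ for all $C'\in\cC\setminus\{C\}$ (here $\emptyset\notin\cC$ guarantees that all these quantities make sense). Writing $A_{C'}$ for the complementary event $\{\tau(C)\ge\tau(C')\}$, we obtain $\xB_C=\Pr\big(\bigcap_{C'\neq C}\overline{A_{C'}}\big)$.

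Next, inclusion--exclusion on the family $\{A_{C'}\}_{C'\in\cC\setminus\{C\}}$ gives $\xB_C=\sum_{S\subseteq\cC\setminus\{C\}}(-1)^{|S|}\,\Pr\big(\bigcap_{C'\in S}A_{C'}\big)$, and it remains to evaluate these probabilities. For nonempty $S$, put $U_S=\bigcup_{C'\in S}C'$. Since $\tau(U_S)=\max_{C'\in S}\tau(C')$, the event $\bigcap_{C'\in S}A_{C'}$ is exactly $\{\tau(C)\ge\tau(U_S)\}$, which holds precisely when the last of the elements of $C\cup U_S$ to be drawn belongs to $C$; by symmetry this has probability $\frac{|C|}{|C\cup U_S|}=\frac{|C|}{|(\cup_{C'\in S}C')\cup C|}$. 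For $S=\emptyset$ the intersection is the whole sample space, with probability $1=|C|/|C|$, so the same expression remains valid. Substituting and regrouping the terms by $k=|S|$ --- and recalling that $|\cC\setminus\{C\}|=|\cC|-1$ --- yields the displayed formula.

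The main thing to be careful about is the bookkeeping with weak versus strict inequalities: one must check that $A_{C'}=\{\tau(C)\ge\tau(C')\}$ is indeed the correct complement of the winning condition $\{\tau(C)<\tau(C')\}$, and that the event $\{\tau(C)\ge\tau(U_S)\}$ coincides exactly with ``the overall last element of $C\cup U_S$ lies in $C$'', including the boundary case where that last element lies in $C\cap U_S$ (there $\tau(C)=\tau(U_S)$, so the weak inequality still holds). Beyond this, the argument needs no matroid axiom other than $\emptyset\notin\cC$; in particular the coloops of $\cM$ never occur in any $C\cup U_S$, consistent with the fact (recorded later) that deleting a coloop does not change $\xB_C$. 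Everything else is the standard symmetry computation for ``which element of a set is drawn last in a uniformly random order'' together with ordinary inclusion--exclusion.
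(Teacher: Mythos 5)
Your proposal is correct and follows essentially the same route as the paper's proof: both express $\xB_C$ as the probability that $C$ is completed strictly before every other circuit, apply inclusion--exclusion to the events ``$C'$ is completed no later than $C$'', and evaluate each intersection as the probability $|C|/|(\cup_{C'\in S}C')\cup C|$ that the last-drawn element of $(\cup_{C'\in S}C')\cup C$ lies in $C$. The only cosmetic difference is that you argue this last probability by symmetry while the paper counts permutations directly; your care with the weak-versus-strict inequalities and the boundary case matches the paper's handling.
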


\begin{proof}
For a circuit $C' \in \cC$ and a permutation $\sigma$ of $E$, define $t_{C'}(\sigma)$ to be the smallest $t \in \{1,\ldots,n \}$ such that $C' \subseteq \sigma(\{ 1,\ldots, t \})$, and
define $P_{C'}$ to be the set of all permutations $\sigma$ such that $t_{C'}(\sigma) \leq t_{C}(\sigma)$. Note that it is possible to have a permutation $\sigma$ and circuits $C_1 \neq C_2$ with $t_{C_1}(\sigma) = t_{C_2}(\sigma)$ (this doesn't contradict axiom (B3) of the matroid bingo game, because that axiom only applies to the first completed card in the game). Define $P$ to be the set of all permutations of $E$.

Note that $P \backslash \cup_{C' \in \cC \backslash \{C\}} P_{C'}$ consists of those permutations $\sigma$ for which $t_{C'}(\sigma) > t_C(\sigma)$ for all circuits $C' \neq C$. So, by definition, we have 
\begin{equation} \label{eq:betaformula1}
\beta_C = \frac{1}{n!} \cdot |P \backslash \cup_{C' \in \cC \backslash \{C\}} P_{C'}| = 1 - \frac{|\cup_{C' \in \cC \backslash \{C\}} P_{C'}|}{n!}.
\end{equation}

By the inclusion-exclusion principle, we have
\begin{equation} \label{eq:inclusion-exclusion}
|\cup_{C' \in \cC \backslash \{C\}} P_{C'}| = \sum_{k = 1}^{|\cC| - 1} \left( (-1)^{k + 1} \cdot \sum_{\substack{S \subseteq \cC \backslash \{C\}, \\ |S| = k}} |\cap_{C' \in S} P_{C'}| \right).
\end{equation}

Concretely, we think of $P$ as the set of all possible orders in which the bingo numbers can be called out,
and for a given collection $S$ of bingo cards other than $C$, the set $\cap_{C' \in S} P_{C'}$ consists of all orders in which $C$ loses to all of the bingo cards in $S$.

We can therefore enumerate the permutations $\sigma$ belonging to $\cap_{C' \in S} P_{C'}$ by:

\medskip

1. Setting $Y := (\cup_{C' \in S} C') \cup C$ and choosing a $|Y|$-element subset $X$ of $[n]$ with $\sigma(X)=Y$.
% the preimage under $\sigma$ of $P' := (\cup_{C' \in S} C') \cup C$.

\medskip

2. Choosing an element $e \in C$ to be the last element from $Y$ to be called out in the order corresponding to $\sigma$.
% out of $(\cup_{C' \in S} C') \cup C$ to called out in $\sigma$. 
% Notice that for $t_{C'}(\sigma) \leq t_C(\sigma)$ to hold for all $C' \in S$, the last element of $Y$ to be called out must belong to $C$.
% some element of $C$ must be called out last from  listed last out of this set of elements.

\medskip

3. Choosing an ordering for $Y \backslash \{ e \}$.
% $((\cup_{C' \in S} C') \cup C) \backslash \{e\}$.

\medskip

4. Choosing an ordering for $E \backslash Y$.
% (\cup_{C' \in S} C') \cup C$.

\medskip

% \todo[inline]{Sorry, this isn't clear to me. I think more details are needed here.}

This gives the formula:

\begin{align*}
|\cap_{C' \in S} P_{C'}| &= \binom{n}{|Y|} \cdot |C| \cdot (|Y| - 1)! \cdot (n - |Y|)! \\
&= n! \cdot \frac{|C|}{|(\cup_{C' \in S} C') \cup C|}.
% |\cap_{C' \in S} P_{C'}| &= \binom{n}{|(\cup_{C' \in S} C') \cup C|} \cdot |C| \cdot (|(\cup_{C' \in S} C') \cup C| - 1)! \cdot (n - |(\cup_{C' \in S} C') \cup C|)! \\
% &= n! \cdot \frac{|C|}{|(\cup_{C' \in S} C') \cup C|}.
\end{align*}

\medskip

Combining this with \eqref{eq:betaformula1} and \eqref{eq:inclusion-exclusion}, we obtain

\begin{align*}
\beta_C &= 1 - \sum_{k = 1}^{|\cC| - 1} \left( (-1)^{k + 1} \cdot \sum_{\substack{S \subseteq \cC \backslash \{C\}, \\ |S| = k}} \frac{|C|}{|(\cup_{C' \in S} C') \cup C|} \right) \\
&= \sum_{k = 0}^{|\cC| - 1} \left( (-1)^{k} \cdot \sum_{\substack{S \subseteq \cC \backslash \{C\}, \\ |S| = k}} \frac{|C|}{|(\cup_{C' \in S} C') \cup C|} \right).
\end{align*}

\end{proof}

\begin{thm} \label{thm:Tuttepolyformula}
Given a matroid $\cM$ on $E = [n]$ and a circuit $C$ of $\cM$, we have
\[\xB_C=\sum_{t=|C|}^{r(\cM)+1} \xB_C(t),\]
where
\[\xB_C(t) = \frac{|C|}{n} \cdot |\cI_{C, \, t - |C|}| \cdot \binom{n - 1}{t - 1}^{-1}.\]

Here $\cI_{C, k}$ denotes the collection of all $k$-element subsets $S$ of $E \backslash C$ such that $C$ is the only circuit contained in $S \cup C.$
% circuit $C$ is still the only circuit completed by $S \cup C$. 
Furthermore, for $0 \leq k \leq r(\cM / C)$ we have
% We can equivalently write:
$$|\cI_{C, \, k}| = |\{I \in \cI(\cM/C) : |I| = k \}| = [x^{r(\cM / C) - k}] T_{\cM/C}(x + 1, 1).$$
\end{thm}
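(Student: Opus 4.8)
The plan is to prove the three assertions in turn: the decomposition $\xB_C=\sum_t\xB_C(t)$ with the closed form for $\xB_C(t)$; the identification $\cI_{C,k}=\{I\in\cI(\cM/C):|I|=k\}$; and the Tutte-polynomial expression for that count. For the probabilistic part, I would model a play of the game by a uniformly random permutation $\sigma$ of $E=[n]$, read as the order in which the balls are drawn, and for each $t$ set $W:=\sigma(\{1,\ldots,t\})$. The first claim is that the event ``$C$ is the winning card and wins in round $t$'' is exactly the event that (i) $\sigma(t)\in C$ and (ii) $C$ is the unique circuit of $\cM$ contained in $W$ (in particular $C\subseteq W$). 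Indeed, (i) together with $C\subseteq W$ says $C$ is completed precisely at step $t$; and since axiom (B3) (equivalently (C3)) forbids a tie for the \emph{first} completed card, no circuit $C'\ne C$ may satisfy $C'\subseteq W$, which is condition (ii); the ``no circuit completed before step $t$'' clause is then automatic. Conversely (i) and (ii) visibly force $C$ to win in round $t$.

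Next I would turn this into a count. Writing $W=C\sqcup S$, condition (ii) says exactly that $S$ is a $(t-|C|)$-element subset of $E\setminus C$ belonging to $\cI_{C,t-|C|}$. A favourable $\sigma$ is therefore determined by independent choices of: an element $S\in\cI_{C,t-|C|}$, a ``last'' element $\sigma(t)\in C$, an ordering of the remaining $t-1$ elements of $W$, and an ordering of the $n-t$ elements of $E\setminus W$; conversely every such tuple yields a distinct favourable $\sigma$. Hence the number of favourable permutations is $|\cI_{C,t-|C|}|\cdot|C|\cdot(t-1)!\cdot(n-t)!$, and dividing by $n!$ and using $\tfrac{(t-1)!(n-t)!}{n!}=\tfrac1n\binom{n-1}{t-1}^{-1}$ gives the stated formula for $\xB_C(t)$. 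Summing over all rounds recovers $\xB_C$; the sum may be restricted to $|C|\le t\le r(\cM)+1$ since $|W|\ge|C|$ always, and since for any $e\in C$ the set $W\setminus\{e\}$ contains no circuit of $\cM$ (any circuit inside it would be a circuit of $\cM$ contained in $W$, hence equal to $C$, impossible), so $t-1\le r(\cM)$.

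For the identification with $\cM/C$, fix $e\in C$ and put $I:=C\setminus\{e\}$, which is independent in $\cM$ by minimality of circuits; then $\cM/C=(\cM/I)/\{e\}$ by \autoref{lem:minors}. Since $C=I\cup\{e\}$ is dependent, $\{e\}$ is a circuit of $\cM/I$, i.e.\ $e$ is a loop there, so (using \autoref{lem:basisrankcontract}) the independent sets of $\cM/C$ are precisely the independent sets of $\cM/I$, none of which contains the loop $e$; concretely these are the sets $S\subseteq E\setminus C$ with $S\cup I$ independent in $\cM$. It remains to check that $S\cup I$ is independent in $\cM$ if and only if $C$ is the unique circuit of $\cM$ contained in $S\cup C$. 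If $S\cup I$ is independent, any circuit $C'\subseteq S\cup C$ must contain $e$; were $C'\ne C$, axiom (C3) applied to $e\in C\cap C'$ would produce a circuit inside $(C\cup C')\setminus\{e\}\subseteq S\cup I$, contradicting independence, so $C'=C$. Conversely, if $C$ is the only circuit in $S\cup C$ and $S\cup I$ were dependent, it would contain a circuit $C'\subseteq S\cup I\subseteq S\cup C$, forcing $C'=C$; but $e\notin S\cup I$, so $C\not\subseteq S\cup I$, a contradiction. This yields $\cI_{C,k}=\{I\in\cI(\cM/C):|I|=k\}$.

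Finally, for the Tutte specialization: in the defining sum $T_{\cM/C}(x,y)=\sum_{A\subseteq E\setminus C}(x-1)^{r(\cM/C)-r(A)}(y-1)^{|A|-r(A)}$, setting $y=1$ kills every term with $|A|>r(A)$ and leaves exactly the independent sets $A$, so $T_{\cM/C}(x,1)=\sum_{A\in\cI(\cM/C)}(x-1)^{r(\cM/C)-|A|}$; substituting $x\mapsto x+1$ gives $T_{\cM/C}(x+1,1)=\sum_{A\in\cI(\cM/C)}x^{r(\cM/C)-|A|}$, whose coefficient of $x^{r(\cM/C)-k}$ is $|\{A\in\cI(\cM/C):|A|=k\}|$, the exponent being nonnegative because $0\le k\le r(\cM/C)$. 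Combined with the previous step this completes the proof. I expect the only genuine subtlety to lie in Step~1 — correctly isolating the event ``$C$ wins in round $t$'' and seeing that the no-tie axiom is precisely what makes condition (ii) equivalent to ``$C$ is the unique circuit contained in $W$'' — together with the circuit-elimination bookkeeping in the third step; the Tutte computation is entirely standard.
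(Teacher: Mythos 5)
Your proof is correct and follows essentially the same route as the paper's: the same count of favourable permutations via the choices (last element of $C$, a set $S\in\cI_{C,t-|C|}$, and orderings of the two blocks), the same identification of $\cI_{C,k}$ with the independent sets of $\cM/C$ by passing through $\cM/(C\setminus\{e\})$ and invoking circuit elimination, and the same $y=1$ specialization of the Tutte polynomial. If anything, you are slightly more explicit than the paper in justifying that ``$C$ wins in round $t$'' is equivalent to $\sigma(t)\in C$ together with $C$ being the unique circuit in $\sigma(\{1,\ldots,t\})$, and in verifying the summation range $|C|\le t\le r(\cM)+1$.
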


\begin{proof}
For the first part, we prove, equivalently, that if $P_C(t)$ denotes the set of permutations of $E$ in which a given circuit $C$ is the first one to be completed, and this happens in round $t$, then
\[
|P_C(t)| = |C|\cdot |\cI_{C, \, t - |C|}| \cdot (t-1)!\cdot (n-t)!.
\]

We can enumerate all such permutations by:

\medskip

1. Choosing an element $e$ to be the last element of $C$ called out (in round \(t\)).

\medskip

2. Choosing a $(t-|C|)$-element subset $S \subseteq E \backslash C$ of additional elements to be called out before circuit $C$ wins, or in other words, an element of $\cI_{C, \, t - |C|}$.

\medskip

3. Choosing an ordering of the elements in $(C \cup S) \backslash \{e\}$.
% to come before $e$.

\medskip

4. Choosing an ordering of $E \backslash (C \cup S)$.
% to come after $e$.

\medskip

This gives the formula:
\[
|P_C(t)| = |C| \cdot |\cI_{c, \, t - |C|}| \cdot (t-1)! \cdot (n-t)!.
\]

And dividing by the $n!$ total number of permutations of $E$, we obtain:

$$\beta_C(t) = \frac{|P_C(t)|}{n!} = \frac{|C|}{n} \cdot |\cI_{C, \, t - |C|}| \cdot \binom{n - 1}{t - 1}^{-1}.$$

Since $\beta_C(t)$ is the probability of circuit $C$ winning on round $t$, the sum over $t$ gives the overall probability of $C$ winning.

\medskip

Note that setting $y = 1$ in \eqref{eq:tutte-def} shows that the number of independent sets of size \( k \) in a matroid $\cM$
is given by the coefficient of $x^{r(\cM) - k}$ in $T_{\cM}(x + 1,1)$. 

\medskip

{\bf Claim:} A subset $I$ of $E \backslash C$ is independent in $\cM / C$ iff $C$ is the only circuit of $\cM$ contained in $I \cup C$.

\medskip

Assuming the claim, we conclude that
$$|\cI_{C, \, k}| = |\{I \in \cI(\cM/C) : |I| = k \}| = [x^{r(\cM / C) - k)}] T_{\cM/C}(x + 1, 1)$$
as desired.

To prove the claim, choose $e \in C$ and let $J = C \backslash \{e\}$. It follows from \autoref{lem:basisrankcontract} that $I$
is independent in $\cM/J$ iff $I \cup J$ is independent in $\cM$. Moreover, since $\{ e \}$ is a loop of $\cM / C$, $I$ is independent in $\cM / C$ iff it is independent in $\cM / J$.

If $C$ is the only circuit of $\cM$ contained in $I \cup C$, then $I \cup J$ is independent in $\cM$ and so $I$ is independent in $\cM/C$.
Conversely, suppose $I$ is independent in $\cM / C$. If there is another circuit $C'$ contained in $I \cup C$, then since $I \cup J$ is independent in $\cM$, $C'$ must contain $e$. But then, by the circuit elimination axiom (C3), there exists a circuit $C''$ contained in the independent set $I \cup J$, a contradiction. Thus $C$ is the unique circuit contained in $I \cup C$.

\end{proof}

We now establish \autoref{thm:intro-logconcave}:

\begin{thm} \label{thm:logconcave}
Let \(\cM\) be a matroid on $[n]$ and let $C$ be a circuit of $\cM$. Then the sequence 
% $\mathfrak (\xB_C(t))_{t \geq 1}$ 
$\xB_C(1),\ \xB_C(2),\ldots,\ \xB_C(n)$
is log-concave.
\end{thm}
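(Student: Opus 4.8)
The plan is to start from the explicit formula of \autoref{thm:Tuttepolyformula} and reduce the claim to the strongest (now proven) form of Mason's conjecture: the \emph{ultra-log-concavity} of the numbers of independent sets of a matroid of each fixed size. Write $\cN := \cM/C$; by \autoref{cor:contracting-a-circuit}, $\cN$ is a matroid on $m := n-|C|$ elements of rank $r(\cN) = r(\cM)-|C|+1$, and setting $k := t-|C|$, \autoref{thm:Tuttepolyformula} gives
\[
\beta_C(t) \;=\; \frac{|C|}{n}\cdot W_k \cdot \binom{n-1}{t-1}^{-1}, \qquad\text{where}\qquad W_k := \bigl|\{\, I\in\cI(\cN) : |I|=k \,\}\bigr|.
\]
Since $n-1 = m+(|C|-1)$ and $t-1 = k+(|C|-1)$, the elementary identity $\binom{m}{k}\big/\binom{m+|C|-1}{\,k+|C|-1\,} = \prod_{i=1}^{|C|-1}(k+i)\big/\prod_{i=1}^{|C|-1}(m+i)$ (an empty product being $1$ when $|C|=1$) lets us rewrite, for every $t$ with $0 \le k \le m$,
\[
\beta_C(t) \;=\; \gamma \cdot \frac{W_k}{\binom{m}{k}} \cdot \prod_{i=1}^{|C|-1}(k+i), \qquad \gamma := \frac{|C|}{n}\cdot\Bigl(\,\textstyle\prod_{i=1}^{|C|-1}(m+i)\,\Bigr)^{-1} > 0 .
\]

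I would then combine two log-concavity statements in the variable $k$. First, $\bigl(W_k/\binom{m}{k}\bigr)_k$ is log-concave: this is exactly the ultra-log-concave form of Mason's conjecture applied to $\cN$, proved via the theory of Lorentzian (equivalently, completely log-concave) polynomials. Second, $\bigl(\prod_{i=1}^{|C|-1}(k+i)\bigr)_k$ is log-concave, being an entrywise product of the sequences $k\mapsto k+i$, each of which is positive and log-concave (indeed $(k+i)^2 - (k+i-1)(k+i+1) = 1 > 0$); and an entrywise product of nonnegative log-concave sequences is log-concave, since $(a_kb_k)^2 = a_k^2b_k^2 \ge (a_{k-1}a_{k+1})(b_{k-1}b_{k+1})$. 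Hence, on the range $0\le k\le m$, $\beta_C(t)$ is the positive constant $\gamma$ times an entrywise product of two nonnegative log-concave sequences, so $\beta_C(t)^2 \ge \beta_C(t-1)\beta_C(t+1)$ at every interior index of that range.

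It remains to handle the endpoints and the zero part. Because $\cN$ is a matroid, $W_k > 0$ precisely for $0 \le k \le r(\cN)$ (and $W_k = 0$ otherwise), and $r(\cN) = r(\cM)-|C|+1 \le n-|C| = m$ since $\cM$ has a circuit, so $r(\cM) < n$; thus $\binom{m}{k} > 0$ throughout $0\le k\le r(\cN)$. Consequently $\beta_C(1),\dots,\beta_C(n)$ is supported exactly on the contiguous block $\{\,|C|,\dots,r(\cM)+1\,\}$ — so it has no internal zeros — and on that block it is $\gamma$ times a product of strictly positive log-concave sequences, which gives the three-term inequality at every interior index of the block; at the two endpoints of the block, and at every index outside it, one of the two neighbours vanishes, so the inequality is trivial. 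Therefore $\beta_C(1),\dots,\beta_C(n)$ is log-concave.

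The one genuinely nontrivial ingredient is the ultra-log-concave Mason conjecture; I expect the remaining steps to be routine. The subtlety worth emphasizing is the first reduction: ordinary log-concavity of the $f$-vector of $\cN$ is \emph{not} enough to absorb the factor $\binom{n-1}{t-1}^{-1}$, but the binomial identity above peels off from it precisely the extra log-concave polynomial factor $\prod_{i=1}^{|C|-1}(k+i)$, after which the ultra-log-concave form is exactly what is needed (with slack $|C|-1$).
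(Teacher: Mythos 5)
Your proof is correct and follows essentially the same route as the paper's: both factor $\beta_C(t)$ as a positive constant times the ultra-log-concave Mason sequence $W_k/\binom{m}{k}$ times the binomial ratio $\binom{m}{k}\big/\binom{n-1}{t-1}$, and both conclude via the closure of nonnegative log-concave sequences under entrywise products. The only cosmetic difference is in verifying log-concavity of the binomial ratio --- you expand it as the polynomial $\prod_{i=1}^{|C|-1}(k+i)$, while the paper checks that its successive quotients are non-increasing --- and your treatment of the support and endpoint cases is, if anything, slightly more careful than the paper's.
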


To prove \autoref{thm:logconcave} we will use (the strong version of) Mason's conjecture, proved independently in \cite[Theorem 4.14]{Branden-Huh20} and \cite[Theorem 1.2]{ultralogc}. For the statement,
% \begin{df*}\cite[p.316]{ligget}
    we say that a sequence $a_1,\ a_2,\ldots,\ a_n$ of real numbers is \textbf{ultra log-concave} if the sequence $\frac{a_1}{\binom{n}{1}},\ \frac{a_2}{\binom{n}{2}},\ldots,\ \frac{a_n}{\binom{n}{n}}$ is log concave \cite[p.316]{ligget}.
% \end{df*}

\begin{thm}[Strong version of Mason's conjecture] \label{thm:strongmason}
Let $\cM$ be a matroid on $[n]$, and let $\cI_k$ denote the collection of independent sets of $\cM$ of size $k$.
Then the sequence $|\cI_0|,|\cI_1|,\ldots,|\cI_n|$ is ultra log-concave.
\end{thm}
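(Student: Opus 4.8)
Since this is the strongest form of Mason's conjecture, any honest plan must ultimately invoke the Hodge-theoretic positivity underlying the theory of \emph{Lorentzian polynomials}; the combinatorial scaffolding, however, is the part I can lay out cleanly. The plan is to encode the numbers $|\cI_k|$ as the coefficients of a single homogeneous polynomial of degree $n$, show that this polynomial is Lorentzian, and then read off ultra-log-concavity from the two-variable case. Recall that a homogeneous degree-$d$ polynomial in $\R[w_0,\ldots,w_n]$ with nonnegative coefficients is Lorentzian if (i) its support is M-convex and (ii) every $(d-2)$-fold coordinate partial derivative is a quadratic form with at most one positive eigenvalue. The input I need about the two-variable case is the dictionary: a homogeneous bivariate form $\sum_{k} a_k\, w_0^{\,d-k} w_1^{k}$ with $a_k \ge 0$ is Lorentzian precisely when $(a_k)$ has no internal zeros and is ultra-log-concave relative to $\binom{d}{k}$, i.e.
\[
a_k^2 \;\ge\; a_{k-1}\,a_{k+1}\cdot\frac{k+1}{k}\cdot\frac{d-k+1}{d-k}.
\]

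Next I would introduce the homogeneous independence generating polynomial
\[
f_\cM(w, x_1, \ldots, x_n) \;=\; \sum_{I \in \cI} w^{\,n-|I|}\prod_{i\in I} x_i,
\]
which is homogeneous of degree $n$ in the $n+1$ variables $w, x_1, \ldots, x_n$. Two features make it the right object. First, setting $x_1 = \cdots = x_n = y$ is a nonnegative linear substitution, and the Lorentzian class is closed under such substitutions; the result is the bivariate form $\sum_{k} |\cI_k|\, w^{\,n-k} y^{k}$ of degree $n$. Granting that $f_\cM$ is Lorentzian, the dictionary above then yields that $|\cI_0|,\ldots,|\cI_n|$ is ultra-log-concave relative to $\binom{n}{k}$ (with no internal zeros, since $|\cI_k|>0$ exactly for $0\le k\le r$), which is the assertion of the theorem. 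Second, the support of $f_\cM$, namely $\{(n-|I|,\mathbf{1}_I) : I \in \cI\}$, consists of lattice points of constant coordinate sum $n$, and that this set is M-convex is a purely combinatorial consequence of the matroid exchange axioms: for equal-size $I,J$ one uses that the size-$k$ independent sets are the bases of the truncation (so symmetric exchange holds), and for unequal sizes one uses the augmentation axiom together with the slack in the $w$-coordinate. Equivalently, these points are the bases of a discrete polymatroid.

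The genuine content --- and the step I expect to be the main obstacle --- is verifying condition (ii), the Lorentzian signature, for $f_\cM$. This is not accessible by elementary means: it is equivalent to a Hodge-index (one-positive-eigenvalue) statement, which is exactly where combinatorial Hodge theory enters. Following Br\"anden--Huh, the route is to first establish that the \emph{basis generating polynomial} $\sum_{B\in\cB}\prod_{i\in B} x_i$ is Lorentzian --- itself a repackaging of the Hodge--Riemann relations for the Chow ring of a matroid (Adiprasito--Huh--Katz) --- and then to deduce the Lorentzian property of $f_\cM$ from it, using closure of the Lorentzian class under nonnegative directional derivatives $\sum_i c_i\partial_i$ and under multiplication by coordinates, the operators that realize the passage from bases to independent sets via truncation.

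Finally, I would keep in mind the alternative route of Anari--Liu--Oveis Gharan--Vinzant, which replaces the Hodge-theoretic input by the theory of high-dimensional expanders: one shows that the independence complex is a local spectral expander and invokes a local-to-global (trickling-down) theorem, thereby reducing the entire statement to an eigenvalue estimate on the rank-$2$ links, which encode the structure of the $U_{1,m}$ and $U_{2,m}$ minors. Either way, the combinatorial setup above is formal and the crux is the positivity/signature input; that is where I would concentrate the effort.
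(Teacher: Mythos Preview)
The paper does not prove this theorem at all: it states the strong Mason conjecture as a known result, cites Br\"and\'en--Huh and Anari--Liu--Oveis~Gharan--Vinzant, and then uses it as a black box in the proof of the log-concavity of the timed winning probabilities. So there is no ``paper's own proof'' to compare against; your proposal goes well beyond what the paper attempts.

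That said, your outline is a faithful sketch of the Br\"and\'en--Huh route (one of the two references the paper cites). The encoding via the homogeneous independence polynomial, the closure of the Lorentzian class under nonnegative linear specialization to two variables, the bivariate dictionary between Lorentzian and ultra-log-concave, and the M-convexity of the support are all set up correctly. One small inaccuracy: the Lorentzian property of the basis generating polynomial in Br\"and\'en--Huh is \emph{not} obtained by importing the Hodge--Riemann relations of Adiprasito--Huh--Katz; their argument is more self-contained (essentially an induction via deletion/contraction together with closure properties of the Lorentzian class and a direct rank-$2$ check). So you should not frame step (ii) as ``a repackaging of AHK.'' With that correction, your plan matches the cited proof, and your mention of the ALOV spectral-expander alternative accurately reflects the second citation.
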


\begin{rem} \label{rem:no-internal-zeros}
Since subsets of an independent set are independent, it is easy to see that the non-negative sequence $|\cI_0|,|\cI_1|,\ldots,|\cI_n|$ has no internal zeros.
\end{rem}

\begin{proof}[Proof of \autoref{thm:logconcave}]
Recall the formula 
\[
\xB_C(t)=\frac{|C|}{n} \frac{|\cI_{C, \, t-|C|}|}{\binom{n-1}{t-1}},
\]
where $\cI_{C, \, k}$ is the collection of independent sets of $\cM / C$ of size $k$ and $|C|\leq t \leq n$.
Moreover, $\xB_C(t)=0$ for $0 \leq t < |C|$.

Since $n$ and $|C|$ are independent of $t$, to prove the theorem it suffices to prove that the sequence
$\frac{|\cI_{C, \, t-|C|}|}{\binom{n-1}{t-1}}$
with $t=|C|,|C|+1,\ldots,n$ is log-concave.

By \autoref{thm:strongmason}, applied to the matroid $\cM / C$ on $E \backslash C$, the sequence 
$\frac{|\cI_{C, \, k}|}{\binom{n-|C|}{k}}$
with $k=0,1,\ldots,r(\cM / C)-|C|$ is log-concave. (We don't need to consider values of $k$ larger than $r(\cM / C)$ because $|\cI_{C, \, k}|=0$ for such values.)

Setting $t=k+|C|$ shows that the sequence
\[
h(t) := \frac{|\cI_{C, \, t-|C|}|}{\binom{n-|C|}{t-|C|}}
\]
with $t=|C|,|C|+1,\ldots,r(\cM / C)$ is log-concave.

% as a sequence in $k$.

Now note that for $|C|\leq t \leq n$, we have
\[
\frac{|\cI_{C, \, t-|C|}|}{\binom{n-1}{t-1}}=\frac{|\cI_{C, \, t-|C|}|}{\binom{n-|C|}{t-|C|}}\cdot\frac{\binom{n-|C|}{t-|C|}}{\binom{n-1}{t-1}}.
\]

We claim that \(g(t) :=\frac{\binom{n-|C|}{t-|C|}}{\binom{n-1}{t-1}}\) is log-concave. Indeed,
\[\frac{g(t+1)}{g(t)}=\frac{\binom{n-|C|}{t+1-|C|}}{\binom{n-1}{t}}/\frac{\binom{n-|C|}{t-|C|}}{\binom{n-1}{t-1}}=\frac{t}{t+1-|C|}\] is non-increasing in \(t\) (its derivative is \(\frac{1-|C|}{(t+1-|C|)^2}\leq0\)), and so \[\frac{g(t)}{g(t-1)}\geq\frac{g(t+1)}{g(t)}\Leftrightarrow g(t)^2\geq g(t-1)g(t+1),\] i.e., \(g(t)\) is log-concave. 

We are now done, because both $f(t)$ and $g(t)$ are non-negative log-concave sequences, and by \cite{unimodal} the pointwise (Hadamard) product of two such sequences is again log-concave.(Proof: We have
\[
\begin{aligned}
\bigl(f(k)g(k)\bigr)^2
&= f(k)^2 g(k)^2 \\
&\ge f(k-1)f(k+1) \cdot g(k-1)g(k+1) \\
&= f(k-1)g(k-1)\cdot f(k+1)g(k+1)
\end{aligned}
\]
for all $k$.)
% and thus the sequence $\{f(k)g(k)\}$ is log-concave.)
\end{proof}

\subsection{Upper and lower bounds on winning probabilities}

% We first make some terminology from Section~\ref{sec:intro-monotonicity} more precise.
% We say that a weak (resp. strong) \textbf{monotonicity violation} occurs between two circuits $C_1$ and $C_2$ in a matroid $\cM$ if $|C_1| < |C_2|$ and $\beta_1(C_1) \leq \beta_1(C_2)$ (resp. $\beta_1(C_1) < \beta_1(C_2)$).

% \medskip

In some of the arguments that follow, we will use the ``falling factorial'' notation
\[
x^{\underline k} := x(x-1)\cdots (x-k+1).
\]

We now prove \autoref{Upper bound} and \autoref{intro-thmD}.

\begin{thm}\label{thmBC}
Let $\cM$ be a matroid of rank $r$ on $n$ elements and let $C$ be a circuit in \(\cM\). Then,
% with $|C| = r + 1 - d$ for some $0 \leq d < r + 1$. Then
\[
% \xB_C(t) \leq \frac{r + 1 - d}{n} \cdot \binom{n - r + d - 1}{t - r + d - 1} \cdot \binom{n - 1}{t - 1}^{-1}
\xB_C(t) \leq \frac{|C|}{n} \cdot \binom{n - |C|}{t - |C|} \cdot \binom{n - 1}{t - 1}^{-1}
\]
for all $1 \leq t \leq n$, and
\[
\xB_C \leq \frac{(r + 1)! \cdot (n - |C|)!}{n! \cdot (r+1-|C|)!} = \frac{\binom{r+1}{|C|}}{\binom{n}{|C|}}.
\]

% \noindent
% \newline
% and as a consequence,

% $$\xB_C \leq \frac{\binom{r+1}{r+1-d}}{\binom{n}{r+1-d}}=\frac{(r + 1)! \cdot (n - r + d - 1)!}{n! \cdot d!}.$$
\end{thm}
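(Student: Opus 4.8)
The plan is to start from the two facts already packaged in \autoref{thm:Tuttepolyformula}: the timed formula $\xB_C(t) = \tfrac{|C|}{n}\,|\cI_{C,\,t-|C|}|\,\binom{n-1}{t-1}^{-1}$, together with the identification of $\cI_{C,k}$ with the collection of size-$k$ independent sets of $\cM/C$. The first inequality is then immediate: $\cI_{C,\,t-|C|}$ is a family of $(t-|C|)$-element subsets of the $(n-|C|)$-element set $E\setminus C$, so $|\cI_{C,\,t-|C|}| \le \binom{n-|C|}{t-|C|}$, and multiplying through by the positive quantity $\tfrac{|C|}{n}\binom{n-1}{t-1}^{-1}$ gives the stated bound on $\xB_C(t)$ for every $t$.

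For the bound on $\xB_C$ itself, I would simply sum the first inequality over $t$. Since $\cM/C$ has rank $r-|C|+1$ by \autoref{cor:contracting-a-circuit}, we have $|\cI_{C,k}|=0$ for $k>r-|C|+1$, hence $\xB_C(t)=0$ for $t>r+1$; and of course $\xB_C(t)=0$ for $t<|C|$. Therefore
\[
\xB_C \;=\; \sum_{t=|C|}^{r+1}\xB_C(t) \;\le\; \frac{|C|}{n}\sum_{t=|C|}^{r+1}\binom{n-|C|}{t-|C|}\binom{n-1}{t-1}^{-1}.
\]
Writing $c=|C|$ and substituting $k=t-c$, a direct rearrangement of factorials gives
\[
\binom{n-c}{k}\binom{n-1}{k+c-1}^{-1} \;=\; \frac{(n-c)!}{(n-1)!}\cdot\frac{(k+c-1)!}{k!} \;=\; \frac{(n-c)!\,(c-1)!}{(n-1)!}\,\binom{k+c-1}{c-1},
\]
so the right-hand side becomes $\tfrac{c}{n}\cdot\tfrac{(n-c)!\,(c-1)!}{(n-1)!}\sum_{k=0}^{r+1-c}\binom{k+c-1}{c-1}$. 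The hockey-stick identity evaluates the inner sum as $\binom{r+1}{c}$, and collecting the factorials then yields exactly $\tfrac{(r+1)!\,(n-c)!}{n!\,(r+1-c)!}=\binom{r+1}{c}/\binom{n}{c}$, which is the claimed bound.

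The whole argument is essentially bookkeeping once the timed formula is available; the only input beyond elementary algebra is the vanishing $\xB_C(t)=0$ for $t>r+1$, which is where \autoref{cor:contracting-a-circuit} enters, so I do not anticipate a genuine obstacle. The points to handle carefully are the degenerate cases — $|C|=r+1$, where the sum collapses to a single term, and $n=r+1$, where one should check the formula returns $\xB_C=1$ — and the bookkeeping in the hockey-stick step when $c=1$ (a loop), where $\binom{k+c-1}{c-1}=1$. Finally, it is worth noting that every inequality above is an equality precisely when $|\cI_{C,k}|=\binom{n-|C|}{k}$ for all $k\le r+1-|C|$, i.e.\ when $\cM/C$ is a uniform matroid; this is the mechanism behind the sharpness assertion recorded in \autoref{rem:upper strict}.
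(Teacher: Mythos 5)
Your proposal is correct and follows essentially the same route as the paper: both bound $|\cI_{C,\,t-|C|}|$ by $\binom{n-|C|}{t-|C|}$ using the identification with independent sets of $\cM/C$, and then evaluate the resulting sum over $t$. The only cosmetic difference is that you close the sum with the hockey-stick identity $\sum_{j=c-1}^{r}\binom{j}{c-1}=\binom{r+1}{c}$, whereas the paper uses the equivalent falling-factorial telescoping identity $\sum_{k=a}^{b}k^{\underline{c}}=\tfrac{1}{c+1}\bigl((b+1)^{\underline{c+1}}-a^{\underline{c+1}}\bigr)$.
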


\begin{proof}
Note, from the proof of \autoref{thm:Tuttepolyformula}, that $|\cI_{C, k}|$ equals the number of $k$-element independent sets in $\cM/C$. The size of the ground set of $\cM / C$ is $n - |C|$, so an upper bound for the number of independent sets of size $k$ is $\binom{n - |C|}{k}$. Substituting this into the formula for $\beta_C(t)$, we obtain

$$\beta_C(t) \leq \frac{|C|}{n} \cdot \binom{n - |C|}{t - |C|} \cdot \binom{n - 1}{t - 1}^{-1}.$$

Since $\beta_C(t) = \sum_{t = |C|}^{r + 1} \beta_C(t)$, this gives
% so we can substitute, expand and simplify binomials, and re-index the sum:

\begin{align*}
\beta_C &\leq \frac{|C|}{n} \sum_{t = |C|}^{r + 1} \frac{(n - |C|)!}{(t - |C|)! \cdot (n - t)!} \cdot \frac{(t - 1)! \cdot (n - t)!}{(n - 1)!} \\
&= \frac{|C| \cdot (n - |C|)!}{n!} \sum_{t = |C| - 1}^r t^{\underline{|C| - 1}} .
\end{align*}

Let $|C| = r + 1 - d$ with $0 \leq d < r + 1$.
By \cite[p.53]{ConcreteMathematics} (a discrete version of the fundamental theorem of calculus), for any integer \(c\neq -1\) we have $\sum_{k = a}^b k^{\underline{c}} = \frac{1}{c + 1} \left( (b + 1)^{\underline{c + 1}} - a^{\underline{c + 1}} \right)$, which gives:
\[
\begin{aligned}
\beta_C &\leq \frac{(n - |C|)!}{n!} \left( (r + 1)^{\underline{|C|}} - (|C| - 1)^{\underline{|C|}} \right) \\
&\leq \frac{(n - |C|)!}{n!} (r + 1)^{\underline{|C|}} \\
&= \frac{(r + 1)! \cdot (n - r + d - 1)!}{n! \cdot d!}.
\end{aligned}
\]
% $$\beta_C \leq \frac{(n - |C|)!}{n!} \left( (r + 1)^{\underline{|C|}} - \cancelto{0}{(|C| - 1)^{\underline{|C|}}} \right) 
% = \frac{(r + 1)! \cdot (n - r + d - 1)!}{n! \cdot d!}.$$
\end{proof}

\begin{rem}\label{rem:upper strict}
This upper bound is sharp, in the sense that given any nonnegative integers $n$, $r < n$, and $d \leq r$ we can construct a matroid on $E = [n]$ of rank $r$ with a circuit $C$ of size $r + 1 - d$ such that $\xB_C$ achieves this bound. Specifically, consider the matroid $U_{d, n - (r + 1 - d)} \oplus U_{r - d, r + 1 - d}$ and denote the unique circuit of the second summand by $C$. Note that appending any $r - d$ elements of $C$ to any basis of $U_{d, n - (r + 1 - d)}$ will yield a basis of $\cM$, so $r(\cM) = r$ as desired. Furthermore, the upper bound on $|\cI_{C, k}|$ is achieved, since $\cM/C = U_{d, n - (r + 1 - d)}$, which contains the desired numbers of independent sets, so $\xB_C$ achieves the desired upper bound as well.
\end{rem}

\begin{rem}
If $\cM$ contains coloops, then the bounds in Theorem~\ref{thmBC} can be improved by applying \autoref{coloops-don't-change-beta-values} and calculating the corresponding bound for $\cM/\cL^*$, where \(\cL^*\) is the set of coloops in $\cM$.
\end{rem}

The following is a strengthening of \autoref{intro-thmD}:

\begin{thm}\label{thmD}
Let $\cM$ be a matroid of rank $r$ on $n$ elements and let $C$ be a circuit in \(\cM\) with $|C| = r + 1 - d$ for some $0 \leq d < r + 1$. Then, if \(m\) denotes the number of coloops in \(\cM\), when \(n>r+1\),
% $$\xB_C \geq \binom{n - 1 - d}{r + 1 - d}^{-1} - \frac{r  + 1 - d}{n-m} \cdot \binom{n - m - 1}{r - m + 1}^{-1},$$ and when \(n=r+1\), \(\xB_C=1\).
$$\xB_C \geq \binom{n - 1 - d}{|C|}^{-1} - \frac{|C|}{n-m} \cdot \binom{n - m - 1}{r - m + 1}^{-1},$$ and when \(n=r+1\), \(\xB_C=1\).
\end{thm}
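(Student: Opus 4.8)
The plan is: dispose of the degenerate case $n=r+1$ by hand, reduce the general case to coloopless matroids, and then convert the inequality into a clean statement about the numbers of independent sets of the contraction $\cM/C$.

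\emph{The case $n=r+1$, and the coloopless reduction.} First I would note that when $n=r+1$ the matroid $\cM$ has only one circuit: fixing a basis $B$, we have $E\setminus B=\{e\}$ for a single element $e$, every circuit meets $E\setminus B$ (since $B$ is independent) and hence contains $e$, and if $C_1\ne C_2$ were two circuits through $e$ then (C3) would produce a circuit inside $(C_1\cup C_2)\setminus\{e\}\subseteq B$, contradicting independence of $B$. So the only circuit is $C$ and $\xB_C=1$. For the rest, let $\cL^*$ be the set of $m$ coloops of $\cM$ and $C'$ the circuit $C$ regarded inside $\cM/\cL^*$ (note $C\cap\cL^*=\emptyset$). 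Applying \autoref{coloops-don't-change-beta-values} once per coloop gives $\xB_C=\xB_{C'}$, and $\cM/\cL^*$ is coloopless of rank $r-m$ on $n-m$ elements, still with $n-m>(r-m)+1$. A direct check shows the asserted right-hand side is literally unchanged under $(n,r,d)\mapsto(n-m,r-m,d-m)$ (the quantities $n-1-d$, $\tfrac{|C|}{\,n-m\,}$ and $\binom{n-m-1}{r-m+1}$ are all invariant, and $d-m=(r-m)+1-|C|\ge 0$ automatically), so it suffices to prove the theorem when $\cM$ is coloopless.

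\emph{Reformulation.} Assume $\cM$ is coloopless and $n>r+1$. By \autoref{cor:contracting-a-circuit}, $\cM/C$ has rank $d$, and iterating \autoref{lem:coloopless} over the elements of $C$ shows $\cM/C$ is coloopless. From \autoref{thm:Tuttepolyformula} and $|\cI_{C,k}|=|\{I\in\cI(\cM/C):|I|=k\}|$,
\[
\xB_C=\frac{|C|}{n}\sum_{t=|C|}^{r+1}\frac{|\cI_{C,\,t-|C|}|}{\binom{n-1}{t-1}}.
\]
The key input is the following lemma: \emph{a coloopless matroid $\cN$ of rank $\rho$ has at least $\binom{\rho+1}{s}$ independent sets of size $s$ for every $0\le s\le\rho$.} Applied to $\cN=\cM/C$ it gives $|\cI_{C,\,t-|C|}|\ge\binom{d+1}{t-|C|}$ for $|C|\le t\le r+1$, so, writing $p_\ell:=\tfrac{|C|(|C|+\ell-1)!(n-|C|-\ell)!}{n!}=\tfrac{|C|}{n}\binom{n-1}{|C|+\ell-1}^{-1}$, we get $\xB_C\ge\sum_{\ell=0}^{d}\binom{d+1}{\ell}p_\ell$. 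Finally a routine computation (of the falling-factorial flavor of the proof of \autoref{thmBC}; equivalently, $\sum_{\ell=0}^{d+1}\binom{d+1}{\ell}p_\ell$ is the probability that in a uniformly random call-order every element outside a fixed $(|C|+d+1)$-set containing $C$ is called after all of $C$, namely $\binom{n-1-d}{|C|}^{-1}$) gives $\sum_{\ell=0}^{d+1}\binom{d+1}{\ell}p_\ell=\binom{n-1-d}{|C|}^{-1}$; since $p_{d+1}=\tfrac{|C|}{n}\binom{n-1}{r+1}^{-1}$, subtracting the $\ell=d+1$ term yields exactly the claimed bound.

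\emph{Proof of the lemma.} Induct on $|E(\cN)|$. A loop can be deleted without changing the rank, colooplessness, or any $|\cI_s|$, so assume $\cN$ is loopless; if $\rho=0$ then $\cN$ is the empty matroid and the claim is trivial. Otherwise pick any $e\in E(\cN)$, which is neither a loop nor (by colooplessness) a coloop, so $|\cI_s(\cN)|=|\cI_s(\cN\setminus e)|+|\cI_{s-1}(\cN/e)|$. Here $|\cI_s(\cN\setminus e)|\ge\binom{\rho}{s}$ because any single basis of the rank-$\rho$ matroid $\cN\setminus e$ already contributes $\binom{\rho}{s}$ independent $s$-subsets; and $\cN/e$ is coloopless (\autoref{lem:coloopless}) of rank $\rho-1$ (\autoref{lem:basisrankcontract}), so induction gives $|\cI_{s-1}(\cN/e)|\ge\binom{\rho}{s-1}$. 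Pascal's rule completes the induction. I expect the fiddliest parts of the whole argument to be the two bookkeeping checks in the middle — that the correction term is invariant under the coloop reduction, and the identity $\sum_{\ell=0}^{d+1}\binom{d+1}{\ell}p_\ell=\binom{n-1-d}{|C|}^{-1}$ — while the conceptual crux is simply isolating the rank-$\rho$ lemma; once found, its proof is the four-line deletion–contraction induction above.
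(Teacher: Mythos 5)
Your proposal is correct, and its overall skeleton matches the paper's: handle $n=r+1$ separately, contract the coloops and invoke \autoref{coloops-don't-change-beta-values} to reduce to the coloopless case, show $\cM/C$ is coloopless of rank $d$, bound $|\cI_{C,k}|\geq\binom{d+1}{k}$, and finish with the binomial summation (your ``complete the sum to $\ell=d+1$ and subtract the last term'' is exactly the bookkeeping hidden in the paper's application of \autoref{lem:combo identity}, and your identity $\sum_{\ell=0}^{d+1}\binom{d+1}{\ell}p_\ell=\binom{n-1-d}{|C|}^{-1}$ checks out). The one genuinely different ingredient is your proof of the key counting lemma, which is \autoref{cor:independent set count} in the paper: there it is obtained by first producing $r+1$ distinct bases via basis exchange (\autoref{lem:basis count}) and then applying the Kruskal--Katona shadow bound (\autoref{cor:Kruskal-Katona}), whereas you prove it by a deletion--contraction induction on $|\cI_s(\cN)|=|\cI_s(\cN\setminus e)|+|\cI_{s-1}(\cN/e)|$ together with Pascal's rule. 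Your route is more elementary and self-contained (it avoids Kruskal--Katona entirely and subsumes the $r+1$-bases lemma as the case $s=\rho$), at the cost of needing the small base-case discussion about loops; the paper's route isolates a reusable extremal-set-theory input. Minor stylistic difference: you dispose of $n=r+1$ up front by showing the matroid has a unique circuit via circuit elimination, while the paper handles the corresponding case $n'=r'+1$ after the coloop reduction via a basis count; both are valid.
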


To prove \autoref{thmD}, we will need the following preliminary results.

\begin{lemma}\label{lem:combo identity}
If $a, b, s \geq 0$ are integers and $a + b < s$, then
$$\sum_{k = a}^{s - b - 1} \binom{k}{a} \binom{s - k - 1}{b} = \binom{s}{a + b + 1}.$$
% For integers $l, m \geq 0$ and $n \geq q \geq 0$, we have
% 
% $$\sum_{k = 0}^l \binom{l - k}{m} \binom{q + k}{n} = \binom{l + q + 1}{m + n + 1}.$$
%
% \noindent
% \newline
% We will use the variables $q = a$, $n = a$, $m = b$, and $l = s - a - 1$ where 
% Then, after substituting and reindexing the summation we obtain:
\end{lemma}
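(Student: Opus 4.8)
The plan is to prove this by a direct double-counting argument, since $\binom{s}{a+b+1}$ has an evident interpretation as the number of $(a+b+1)$-element subsets of $\{1,\dots,s\}$. Given such a subset $T=\{t_1<t_2<\dots<t_{a+b+1}\}$, I would stratify according to the value $k := t_{a+1}-1$, i.e. the number of elements of $\{1,\dots,s\}$ lying strictly below the $(a+1)$-st smallest element of $T$. Having fixed this value, the elements $t_1,\dots,t_a$ form an arbitrary $a$-subset of $\{1,\dots,k\}$, and the elements $t_{a+2},\dots,t_{a+b+1}$ form an arbitrary $b$-subset of $\{k+2,\dots,s\}$, a set of size $s-k-1$. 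Thus there are exactly $\binom{k}{a}\binom{s-k-1}{b}$ subsets $T$ with the given value of $k$, and summing over $k$ reproduces the left-hand side.

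The only bookkeeping point is the range of summation: $t_{a+1}$ has $a$ elements of $T$ below it and $b$ above it, forcing $a+1\le t_{a+1}\le s-b$, equivalently $a\le k\le s-b-1$; the hypothesis $a+b<s$ is precisely what makes this interval nonempty, and the summand vanishes outside it (one of the two binomials is zero), so the stated limits capture the full support. This finishes the argument.

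For completeness I would mention two alternatives: a generating-function proof, reading the sum off as the coefficient of $x^{\,s-a-b-1}$ in $(1-x)^{-(a+1)}(1-x)^{-(b+1)}=(1-x)^{-(a+b+2)}$, which equals $\binom{s}{a+b+1}$; or an induction on $s$, splitting $\binom{s-k-1}{b}$ via Pascal's rule into two sums of the same shape with smaller parameters. I expect no genuine obstacle here, as this is a standard Vandermonde-type identity; I would therefore present the double-counting proof, which is self-contained and matches the combinatorial flavor of the surrounding material.
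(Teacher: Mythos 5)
Your double-counting argument is correct: stratifying the $(a+b+1)$-subsets of $\{1,\dots,s\}$ by the position of their $(a+1)$-st smallest element gives exactly $\binom{k}{a}\binom{s-k-1}{b}$ subsets for each admissible $k$, and your bookkeeping of the summation range (including the role of the hypothesis $a+b<s$ in making it nonempty) is right. The paper takes a different, shorter route: it quotes the ``parallel summation'' identity $\sum_{k=0}^{l}\binom{l-k}{m}\binom{q+k}{n}=\binom{l+q+1}{m+n+1}$ from Graham--Knuth--Patashnik, specializes $q=n=a$, $m=b$, $l=s-a-1$, and re-indexes to obtain the stated form. So the paper's proof is essentially a citation plus a change of variables, whereas yours is self-contained and explains \emph{why} the identity holds combinatorially; the trade-off is brevity versus transparency, and in a paper whose other arguments are largely enumerative your version arguably fits the surrounding material better. (Amusingly, an earlier draft of this lemma in the source contains a commented-out proof that is exactly your double-counting argument.) Your generating-function aside also checks out: the sum is the coefficient of $x^{s-1}$ in $x^{a+b}(1-x)^{-(a+b+2)}$, i.e.\ the coefficient of $x^{s-a-b-1}$ in $(1-x)^{-(a+b+2)}$, which is $\binom{s}{a+b+1}$.
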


\begin{proof}
By \cite[p.169]{ConcreteMathematics}, for integers $l, m \geq 0$ and $n \geq q \geq 0$ we have
$$\sum_{k = 0}^l \binom{l - k}{m} \binom{q + k}{n} = \binom{l + q + 1}{m + n + 1}.$$
Setting $q = a$, $n = a$, $m = b$, and $l = s - a - 1$ gives
$$\sum_{k = 0}^{s - a - 1} \binom{a + k}{a} \binom{(s - a - 1) - k}{b} = \binom{s}{a + b + 1}.$$
Re-indexing the sum and cutting off zero terms gives:
$$\sum_{k = a}^{s - b - 1} \binom{k}{a} \binom{s - k - 1}{b} = \binom{s}{a + b + 1}.$$
\end{proof}

\begin{lemma}\label{lem:basis count}
If $\cM$ is a coloopless matroid of rank $r$, then $\cM$ has at least $r+1$ distinct bases.
\end{lemma}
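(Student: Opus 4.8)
The natural approach is induction on the rank $r$, using deletion–contraction as the inductive engine. The base case $r=0$ is immediate: a coloopless matroid of rank $0$ has at least $1 = r+1$ basis (the empty set is always a basis). For the inductive step, assume $r \geq 1$ and pick any element $e \in E$. Since $\cM$ is coloopless, $e$ is not a coloop, so either $e$ is a loop or $e$ is neither a loop nor a coloop; I would handle these cases separately.

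If $e$ is neither a loop nor a coloop, then $\cM \backslash e$ and $\cM / e$ both have rank equal to... well, $\cM/e$ has rank $r-1$ by \autoref{lem:basisrankcontract}(2) (since $\{e\}$ is independent), and $\cM \backslash e$ has rank $r$ (since $e$ is not a coloop, deleting it doesn't drop the rank — this is \autoref{lem:basisrankcontract}(1) with $T = \{e\}$, noting $e$ being a non-coloop means $\{e\}$ is independent in $\cM^*$). The key point is that the bases of $\cM$ split: those not containing $e$ are exactly the bases of $\cM \backslash e$, and those containing $e$ are exactly $B \cup \{e\}$ for $B$ a basis of $\cM/e$. So $b(\cM) = b(\cM \backslash e) + b(\cM/e)$. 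Now $\cM/e$ is coloopless by \autoref{lem:coloopless}, so by induction $b(\cM/e) \geq r$. And $b(\cM \backslash e) \geq 1$ trivially (any matroid has at least one basis). Hence $b(\cM) \geq r + 1$.

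If instead $e$ is a loop, then I would simply pass to $\cM \backslash e$ (equivalently $\cM/e$), which is still coloopless and still has rank $r$, but now has one fewer element; since $\cM$ and $\cM \backslash e$ have the same bases, we can induct on the number of elements for this case, or just observe we may assume $\cM$ is loopless from the start. The cleanest framing: do a preliminary reduction removing all loops (which changes neither the rank nor the number of bases and preserves colooplessness), then run the deletion–contraction induction above, where every element is now a non-loop non-coloop.

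\textbf{Main obstacle.} The delicate point is verifying that $\cM/e$ remains coloopless — but this is exactly \autoref{lem:coloopless}, which is already available. The second subtlety is getting the rank bookkeeping right so that the inductive hypothesis applies to $\cM/e$ with rank $r-1$ (giving $\geq r$ bases); this relies on $e$ being independent, i.e., not a loop, which is why the loop-reduction step comes first. Everything else is routine, so I expect no genuine difficulty — the lemma is a standard consequence of the basis-counting recurrence $b(\cM) = b(\cM\backslash e) + b(\cM/e)$ (which is $T_\cM(1,1) = T_{\cM\backslash e}(1,1) + T_{\cM/e}(1,1)$ from \autoref{lem:coloop tutte}(1)) combined with induction.
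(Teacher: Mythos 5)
Your argument is correct, but it takes a genuinely different route from the paper. You prove the bound by induction on the rank via the deletion--contraction recurrence $b(\cM)=b(\cM\backslash e)+b(\cM/e)$ for a non-loop, non-coloop element $e$, using \autoref{lem:coloopless} to keep the inductive hypothesis applicable to $\cM/e$ and the trivial bound $b(\cM\backslash e)\geq 1$; the loop-removal preprocessing and the rank bookkeeping ($r(\cM\backslash e)=r$ because $e$ is not a coloop, $r(\cM/e)=r-1$ because $e$ is not a loop) are exactly the points that need care, and you handle them correctly. The paper instead gives a direct, non-inductive argument: fix a basis $B=\{b_1,\dots,b_r\}$, use colooplessness to find for each $i$ a basis omitting $b_i$, and apply the basis exchange property (\autoref{lem:basisexchange}) to produce bases $B_i'=(B\backslash\{b_i\})\cup\{e_i\}$; the $r+1$ bases $B,B_1',\dots,B_r'$ are pairwise distinct since they have distinct intersections with $B$. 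The paper's proof is shorter and explicitly exhibits the $r+1$ bases, needing only the exchange axiom; yours is structurally cleaner as a recursion and makes the connection to $T_\cM(1,1)$ visible, at the cost of invoking \autoref{lem:coloopless} (which the paper proves anyway for \autoref{thmD}) and a preliminary reduction to the loopless case. Both establish the lemma, and both are sharp on $U_{r,r+1}$.
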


\begin{proof}
Let $B=\{b_1,\dots,b_r\}$ be a basis of $\cM$. Since $\cM$ has no coloops, for each $1 \leq i \leq r$, there exists a basis $B_i$ with $b_i \notin B_i$. Applying \autoref{lem:basisexchange} (the basis exchange property) to $b_i \in B \backslash B_i$, we find that for each $i$, there is an element $e_i \in B_i \backslash B$ such that $B'_i := (B \backslash \{b_i\}) \cup \{e_i\}$ is also a basis.

The bases $B'_1, \dots, B'_r$ are all distinct from one another, and from $B$. Indeed, if $i \neq j$, then $B'_i$ omits $b_i$ but contains $b_j$, while $B'_j$ omits $b_j$ but contains $b_i$, so $B'_i \neq B'_j$. Moreover, $B'_i \neq B$ since $b_i \notin B'_i$ but $b_i \in B$. Thus we have 
at least $r + 1$ distinct bases $B, B'_1, \dots, B'_r$.

% This additional result was incorrect because it didn't properly account for loops.
% Now suppose equality holds. Then $M$ has exactly $r+1$ bases. In particular, every 
% element of $E$ lies outside some basis (since there are no coloops), which forces 
% $|E|=r+1$. But if any $r$-subset of $E$ failed to be a basis, then some element would 
% belong to all bases, contradicting the assumption that $M$ has no coloops. Thus every 
% $r$-subset of $E$ is a basis, i.e.,\ $M \cong U_{r,r+1}$.
\end{proof}

% The result follows from induction on \(r\).

% \emph{Base case \(r=0\)}: A rank \(0\) matroid \(\cM\) has exactly one basis, namely the empty set, as such \(b(\cM)=1=r+1\).

% \emph{Induction step}: Let \(\cM\) be a matroid of rank \(r>0\) and assume the claim holds for all coloop-less matroids of rank less than \(r\). Since \(\cM\) has rank \(r>0\), it has at least one element \(e\) which is not a loop, and by assumption not a coloop. Recall from \ref{lem:basis count recurrence} that $b(\cM) = b(\cM \backslash e) + b(\cM/e)$, where $b$ counts the number of bases and $e \in E$ is neither a loop nor coloop. Every matroid has at least one independent set (the empty set), as such every matroid also has at least one maximum size independent set, a basis, hence $b(\cM \backslash e) \geq 1$. Finally, the rank of $\cM / e$ is \(r-1\), moreover \(\cM/e\) is coloop-less since... \todo[inline]{Add reference or shortly prove statement}, as such, by the inductive hypothesis, \(b(\cM/e)\geq r\) and so, \[b(\cM)=b(\cM \backslash e)+b(\cM/e)\geq r+1,\] proving the claim.

The following is a corollary of the Kruskal-Katona Theorem:

\begin{cor}\label{cor:Kruskal-Katona}  \cite[p.122]{anderson2002combinatorics}
Let $S$ be a collection of $r$-element sets, and for $k < r$ define 
\[
\Delta^{(k)} S := \{B : |B| = k, \ B \subseteq A \text{ for some } A \in S\}.
\]
If $|S| = \binom{a_r}{r} + \cdots + \binom{a_t}{t}$
for some $a_r > \cdots > a_t \geq t \geq 1$, then
$$|\Delta^{(k)} S| \geq \binom{a_r}{k} + \cdots + \binom{a_t}{k + t - r}.$$
\end{cor}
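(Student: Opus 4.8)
The plan is to deduce this iterated-shadow bound from the basic Kruskal--Katona theorem for the \emph{immediate} lower shadow, by induction on $j := r-k$. Recall the numerical form of Kruskal--Katona: if $\mathcal F$ is a family of $m$-element sets and $|\mathcal F|=\binom{a_m}{m}+\binom{a_{m-1}}{m-1}+\cdots+\binom{a_s}{s}$ is the $m$-cascade representation of $|\mathcal F|$ (so $a_m>a_{m-1}>\cdots>a_s\geq s\geq 1$), then $\partial\mathcal F := \Delta^{(m-1)}\mathcal F$ satisfies $|\partial\mathcal F|\geq\binom{a_m}{m-1}+\binom{a_{m-1}}{m-2}+\cdots+\binom{a_s}{s-1}$. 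I would first record the elementary bookkeeping fact that this right-hand side is again a valid cascade representation at level $m-1$: the tops $a_m>\cdots>a_s$ are unchanged, the bottom top still satisfies $a_s\geq s-1$, and the conventions $\binom{a}{0}=1$ and $\binom{a}{j}=0$ for $j<0$ take care of the boundary case $s=1$. Hence the one-step bound can be applied repeatedly, and one also checks the easy identity $\Delta^{(k)}S=\partial\bigl(\Delta^{(k+1)}S\bigr)$ (valid since every $k$-subset of an $r$-set with $r>k$ extends to a $(k+1)$-subset).

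For the induction, the case $j=1$ is exactly the basic theorem applied to $S$. For $j>1$, the inductive hypothesis applied with $k+1$ in place of $k$ (note $r-(k+1)=j-1$ and $k+1\leq r-1$) gives $|\Delta^{(k+1)}S|\geq\binom{a_r}{k+1}+\binom{a_{r-1}}{k}+\cdots+\binom{a_t}{k+1+t-r}$, and by the bookkeeping observation this is a cascade representation at level $k+1$. Now invoke Kruskal--Katona together with the monotonicity of the numerical shadow function $N\mapsto\partial_{k+1}(N)$ (which follows, for instance, from the colex form of the theorem: the colex-initial segment of size $N$ is contained in that of size $N'\geq N$, hence has a smaller $(k)$-shadow). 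Applying this to $\mathcal G=\Delta^{(k+1)}S$ yields $|\partial\mathcal G|\geq\partial_{k+1}\bigl(\binom{a_r}{k+1}+\cdots+\binom{a_t}{k+1+t-r}\bigr)=\binom{a_r}{k}+\cdots+\binom{a_t}{k+t-r}$, which is the claimed bound.

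The one genuine subtlety — and hence the main obstacle — is that the inductive hypothesis supplies only a \emph{lower bound} on $|\Delta^{(k+1)}S|$, not an exact value, so one cannot simply feed it into the numerical shadow operator; the monotonicity of $\partial_m(\cdot)$ is what makes the inequality propagate. Everything else is routine manipulation of cascade representations. An alternative that sidesteps the monotonicity lemma is to use the colex form of Kruskal--Katona directly: the colex-initial segment of $r$-sets of size $|S|$ simultaneously minimizes every $\Delta^{(k)}$ over families of that size, and a direct computation shows its $k$-shadow is the colex-initial segment of size $\binom{a_r}{k}+\cdots+\binom{a_t}{k+t-r}$. I would mention this route as a remark but carry out the induction version, since it requires only the most standard statement of the theorem.
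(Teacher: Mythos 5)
The paper does not prove this statement at all: it is quoted verbatim as a known consequence of the Kruskal--Katona theorem, with a citation to Anderson's book, and is then used as a black box in the proof of \autoref{cor:independent set count}. So there is no internal proof to compare against; your proposal supplies an argument the authors deliberately outsourced. On its own merits, your proof is correct and is the standard derivation: induct on $r-k$, apply the one-step numerical Kruskal--Katona bound, and observe that the resulting right-hand side is again (essentially) a cascade representation so the bound can be fed back in. You correctly isolate the one real subtlety, namely that the inductive hypothesis gives only a lower bound on $|\Delta^{(k+1)}S|$; note that instead of proving monotonicity of the numerical shadow function $N\mapsto\partial_{k+1}(N)$, you can sidestep it entirely by choosing a subfamily $\mathcal G'\subseteq\Delta^{(k+1)}S$ of size exactly $\binom{a_r}{k+1}+\cdots+\binom{a_t}{k+1+t-r}$, applying Kruskal--Katona to $\mathcal G'$, and using $\partial\mathcal G'\subseteq\partial\bigl(\Delta^{(k+1)}S\bigr)$. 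The only place where your sketch is thinner than it should be is the boundary case $k+1+t-r\le 0$: the displayed sum is then not literally a cascade representation (the uniqueness theorem requires the lowest index to be at least $1$), and one should truncate the trailing terms $\binom{a_i}{0}=1$ and $\binom{a_i}{j}=0$ ($j<0$) before invoking the one-step theorem, then check that the discarded contributions reappear consistently on both sides. This is routine but worth writing out; for the application in the paper ($|S|=\binom{r+1}{r}$, a single term) the issue never arises.
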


Combining \autoref{cor:Kruskal-Katona} and \autoref{lem:basis count}, we get the following lower bound on the number of independent sets in a coloopless matroid:

\begin{cor}\label{cor:independent set count}
Every coloopless rank $r$ matroid has at least $\binom{r + 1}{k}$ independent sets of cardinality $k$ for $0 \leq k \leq r$.
\end{cor}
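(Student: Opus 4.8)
The plan is to combine the two immediately preceding results in a direct way. First I would invoke \autoref{lem:basis count} to obtain a collection $\cB_0$ of exactly $r+1$ distinct bases of $\cM$ (discarding extras if $\cM$ has more than $r+1$ bases). Every member of $\cB_0$ is an $r$-element subset of the ground set, and since any subset of an independent set is independent, every $k$-element subset of a basis is an independent set of $\cM$. Hence $\Delta^{(k)}\cB_0$ is contained in the collection of independent sets of $\cM$ of cardinality $k$, and so it suffices to show $|\Delta^{(k)}\cB_0| \geq \binom{r+1}{k}$ for $0 \le k \le r$.

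For $0 < k < r$ I would apply \autoref{cor:Kruskal-Katona} with $S = \cB_0$ (so the ``$r$'' of that corollary is our rank $r$). The cascade representation of $|S| = r+1$ is simply the one-term expression $r+1 = \binom{r+1}{r}$, i.e. $a_r = r+1$ and $t = r$; this satisfies the hypothesis $a_r > \cdots > a_t \geq t \geq 1$ whenever $r \geq 1$, and the corollary then yields $|\Delta^{(k)}\cB_0| \geq \binom{r+1}{k}$, as needed. The boundary cases are handled by hand: for $k = 0$ the only independent set of size $0$ is $\emptyset$, so the count is $1 = \binom{r+1}{0}$; for $k = r$ the independent sets of size $r$ are exactly the bases, of which there are at least $r+1 = \binom{r+1}{r}$ by \autoref{lem:basis count}. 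The degenerate case $r = 0$ is immediate, since then only $k=0$ arises.

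There is no genuinely hard step here: all the mathematical content sits in the two ingredients being cited, namely the basis-counting bound \autoref{lem:basis count} (a basis-exchange argument) and the Kruskal--Katona inequality \autoref{cor:Kruskal-Katona}. The only thing needing care is the bookkeeping around applying \autoref{cor:Kruskal-Katona}: verifying that the trivial one-term cascade $r+1 = \binom{r+1}{r}$ meets the index conditions $a_r > \cdots > a_t \geq t \geq 1$, and treating the endpoints $k = 0$ and $k = r$ separately, since the corollary is stated only for $k < r$.
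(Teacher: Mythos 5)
Your proposal is correct and follows essentially the same route as the paper: extract $r+1$ distinct bases via \autoref{lem:basis count} and apply \autoref{cor:Kruskal-Katona} with the one-term cascade $a_r = r+1$, $t=r$. If anything you are slightly more careful than the paper, which asserts that $\cI_k$ \emph{equals} $\Delta^{(k)}S$ (only the containment $\Delta^{(k)}S \subseteq \cI_k$ is true or needed), and which does not separately address the endpoint cases $k=0$ and $k=r$.
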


\begin{proof}
By \autoref{lem:basis count}, there exists a set $S$ consisting of $r + 1$ distinct bases of $\cM$, each of which have size $r=r(\cM)$.
Since a subset of $E$ is independent if and only if it is contained in a basis, if $\cI_k$ denotes the collection of independent sets of $\cM$ of size $k$ then $\cI_k$ is equal to $\Delta^{(k)} S$ (using the notation from \autoref{cor:Kruskal-Katona}).
Setting $a_r = r + 1$ and $t = r$, \autoref{cor:Kruskal-Katona} tells us that $|\Delta^{(k)} S| \geq \binom{r + 1}{k}$, and hence $|\cI_k| \geq \binom{r + 1}{k}$ as desired.
\end{proof}

\begin{proof}[Proof of \autoref{thmD}]
Let \(\cL^*\) be the set of all coloops in \(\cM\), with \(|\cL^*|=m\), and consider the contraction \(\cM'=(E', \cC')=\cM / \cL^*\), where \(n'=|E'|=n-m\) and \(r'=r(\cM')=r(\cM)-m\) by \autoref{lem:basisrankcontract}. 
% Denote by \(m\) the number of coloops, \(|\cL^*|\), and 
Denote by \(C'\) the circuit \(C\backslash \cL^*\), which has size \(|C'|=|C|=r'+1-d'\), where \(d'=d-m\). The matroid \(\cM'\) is coloopless,
and by \autoref{lem:coloopless},
% by \autoref{cor:coloopless}, and 
$\cM'' := \cM'/C'$ is also coloopless.
% , because the contraction of a coloopless matroid is again coloopless.
% contracting cannot create coloops.
% \todo[inline]{Give a quick argument here or above.}
% if $C' = J \cup \{ e \}$ with $J$ independent then $\cM'' = (\cM' / J) / \{ e \}$ and $e$ is a loop

By \autoref{cor:contracting-a-circuit} and \autoref{cor:independent set count}, we conclude that $\cM''$ has at least $\binom{d' + 1}{k}$ independent sets of cardinality $k$. 
Therefore,
% So, by substitution we obtain:
$$\beta_{C'}(t) \geq \frac{|C'|}{n'} \cdot \binom{d' + 1}{t - |C'|} \cdot \binom{n' - 1}{t - 1}^{-1},$$
and summing over all \(t\) we get:
$$\beta_{C'} \geq \frac{|C'|}{n'} \sum_{t = |C'|}^{r' + 1} \binom{d' + 1}{t - |C'|} \cdot \binom{n' - 1}{t - 1}^{-1}.$$

\medskip

\textbf{Case 1:} \( n'=r'+1. \)

\medskip

By \autoref{lem:basis count}, $\cM'$ has at least \(r'+1\) bases (of size \(r'\)). However, there are only \(\binom{n'}{r'}=r'+1\) subsets of $E'$ in \(\cM'\) of size \(r'\). Therefore $\cM'$ has a unique circuit, namely the ground set $E'$, so $|C'|=n'$. 
% as such the only circuit in \(\cM'\) is the \(r'+1=n'\) sized set. 
Thus $d'=0$ and
$$\beta_{C'} \geq \frac{|C'|}{n'} \sum_{t = |C'|}^{r' + 1} \binom{1}{t - |C'|} \cdot \binom{r'}{t - 1}^{-1}.$$

This means $|C'| = r' + 1$ which collapses this sum to one term, and \(\xB_{C'}\) can at most be \(1\), so

\[1\geq\beta_{C'} \geq \frac{n'}{n'} \binom{1}{0}\binom{r'}{r'}^{-1}=1\ \Longrightarrow\ \xB_{C'}=1. \]

\medskip

\textbf{Case 2:} \( n'>r'+1. \)

\medskip

In this case, standard algebraic manipulations give:
% expanding out binomial coefficients we obtain
\[
\begin{aligned}
\beta_{C'} &\geq \frac{|C'|}{n'} \sum_{t = |C'|}^{r' + 1} \binom{d' + 1}{t - |C'|} \cdot \binom{n' - 1}{t - 1}^{-1} \\
&\geq \frac{|C'|}{n'} \sum_{t = |C|}^{r' + 1} \frac{(d' + 1)!}{(t - |C'|)! \cdot (r' + 2 - t)!} \cdot \frac{(t - 1)! \cdot (n' - t)!}{(n' - 1)!} \\
&\geq \frac{(d' + 1)! \cdot |C'|! \cdot (n' - r' - 2)!}{n'!} \sum_{t = |C'|}^{r' + 1} \binom{t - 1}{|C'| - 1} \binom{n' - t}{n' - r' - 2} \\
&= \frac{(d' + 1)! \cdot |C'|! \cdot (n' - r' - 2)!}{n'!} \sum_{t = |C'| - 1}^{r'} \binom{t}{|C'| - 1} \binom{n' - t - 1}{n' - r' - 2}.
\end{aligned}
\]

% Next, we regroup terms into binomials, pull out constants, and re-index the sum.

Applying \autoref{lem:combo identity} and simplifying gives:
% Finally, since \(n'>r'+1\), the hypothesis of \autoref{lem:combo identity} holds, as such if this sum went up to $t = r' + 1$, then it would evaluate to $\binom{n'}{n' - 1 - d'}$. So, subtracting off this last summand and simplifying we have:
$$\beta_{C'} \geq \binom{n' - 1 - d'}{r'  + 1 - d'}^{-1} - \frac{r' + 1 - d'}{n'} \cdot \binom{n' - 1}{r' + 1}^{-1}.$$

To finish the proof, note that, by \autoref{coloops-don't-change-beta-values}, we have \(\xB_{C'}=\xB_C\). Since $n = n' + m$ and $r = r' + m$, 
we have
$n' = r' + 1$ iff $n = r + 1$, and in this case $\beta_C = \beta_{C'} = 1$. 
Similarly, $n' > r' + 1$ iff $n > r + 1$, and in this case
\begin{align*}
    \xB_C=\xB_{C'}&\geq \binom{n' - 1 - d'}{r'  + 1 - d'}^{-1} - \frac{r' + 1 - d'}{n'} \cdot \binom{n' - 1}{r' + 1}^{-1} \\&= \binom{n - 1 - d}{|C|}^{-1} - \frac{|C|}{n-m} \cdot \binom{n - m - 1}{r - m + 1}^{-1}.
\end{align*}
\begin{comment}

Finally, we claim that 
\[
\frac{r + 1 - d}{n-m} \cdot \binom{n - m - 1}{r - m + 1}^{-1} \geq \frac{r + 1 - d}{n} \cdot \binom{n - 1}{r + 1}^{-1}
\]
for all $0 \leq m \leq r$, which will finish the proof.

To see this, for $0\le m\le r$ define
% $0 \le m \leq \min\{r+1,n-1\}$ 
\[
F(m)\;:=\;\frac{r+1-d}{\,n-m\,}\cdot\binom{\,n-m-1\,}{\,r-m+1\,}^{-1}.
\]

Let $a:=n-m$ and $b:=r-m+1$, so that $a\ge b+ (n-r-1)\ge b+1$ and
\[
F(m)=\frac{r+1-d}{a}\cdot\binom{a-1}{b}^{-1}.
\]

We have
\[
\frac{F(m+1)}{F(m)}
% =\frac{\displaystyle \frac{r+1-d}{a-1}\binom{a-2}{b-1}^{-1}}
% {\displaystyle \frac{r+1-d}{a}\binom{a-1}{b}^{-1}}
=\frac{a}{a-1}\cdot\frac{\binom{a-1}{b}}{\binom{a-2}{b-1}}
=\frac{a}{a-1}\cdot\frac{a-1}{b}
=\frac{a}{b}
=1+\frac{n-r-1}{\,r-m+1\,}\;>\;1.
\]

Thus $F(m)$ is strictly increasing in $m$, and so its minimum in the range $0\le m\le r$ occurs at $m=0$.
    
\end{comment}
\end{proof}

\begin{rem}\label{rem:lower strict}
The lower bound in \autoref{thmD} is sharp, in the sense that given any nonnegative integers $n$, $r < n-1$, and $d \leq r$, we can construct a matroid on $E = [n]$ of rank $r$ with a circuit $C$ of size $r + 1 - d$ such that $\xB_C$ achieves this bound. Specifically, consider the matroid \[U_{d, d+1} \oplus U_{r-d, r+1-d} \oplus \underbrace{U_{0,1} \oplus \cdots \oplus U_{0,1}}_{q\ \text{loops}},\] 
where \(q=n-r-2\), and denote the unique circuit of \(U_{r-d, r+1-d}\) by $C$. Then the lower bound on $|\cI_{C, k}|$ is achieved, since $\cM/C = U_{d, d+1} \bigoplus_{i \in [q]} U_{0,1}$ contains the desired numbers of independent sets, so $\xB_C$ achieves the lower bound as well.
\end{rem}

The following simplified version of \autoref{thmD}, which is \autoref{intro-thmD}, has a more pleasant appearance (although it is no longer sharp when $d>0$):

\begin{thm}\label{cor:thmD-simplified}
Let $\cM$ be a matroid of rank $r$ on $n$ elements and let $C$ be a circuit in \(\cM\) with $|C| = r + 1 - d$ for some $0 \leq d < r + 1$. Then 
$$\xB_C \geq \binom{n - d}{|C|}^{-1}.$$
\end{thm}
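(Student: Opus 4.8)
The plan is to obtain this as a quick consequence of the sharper bound \autoref{thmD}; the substantive combinatorics is already there, and only a manipulation of binomial coefficients remains. Write $|C| = r+1-d$, let $\cL^*$ denote the set of coloops of $\cM$, and set $m = |\cL^*|$. When $n = r+1$ the asserted bound holds with equality: \autoref{thmD} gives $\xB_C = 1$, while $n-d = r+1-d = |C|$ forces $\binom{n-d}{|C|} = 1$. So I may assume $n > r+1$, in which case \autoref{thmD} yields
\[
\xB_C \;\geq\; \binom{n-1-d}{|C|}^{-1} - \frac{|C|}{n-m}\binom{n-m-1}{r-m+1}^{-1},
\]
so it suffices to prove
\[
\binom{n-1-d}{|C|}^{-1} - \binom{n-d}{|C|}^{-1} \;\geq\; \frac{|C|}{n-m}\binom{n-m-1}{r-m+1}^{-1}.
\]

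For the left-hand side I would use the identity $\binom{n-d}{|C|} = \frac{n-d}{\,n-d-|C|\,}\binom{n-1-d}{|C|}$, which is legitimate because $n-d-|C| = n-r-1 \geq 1$, to rewrite the difference of reciprocals as $\frac{|C|}{n-d}\binom{n-1-d}{|C|}^{-1}$. After cancelling the common factor $|C|$ and clearing the strictly positive denominators, the desired inequality becomes
\[
(n-m)\binom{n-m-1}{r-m+1} \;\geq\; (n-d)\binom{n-1-d}{|C|}.
\]
Expanding both sides into factorials and invoking the two identities $(n-m-1)-(r-m+1) = n-r-2$ and $(n-1-d)-|C| = n-r-2$, the common factor $(n-r-2)!$ cancels, leaving $\frac{(n-m)!}{(r-m+1)!} \geq \frac{(n-d)!}{(r+1-d)!}$, i.e.\ $(n-m)^{\underline{d-m}} \geq (r+1-m)^{\underline{d-m}}$.

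This final inequality is immediate, being a comparison of two falling factorials of the same length $d-m$ with $n-m > r+1-m > 0$. The one step that requires a remark is that $d - m \geq 0$ — needed both for the falling factorials to make sense and for the reduction inside \autoref{thmD} to be valid: since $C$ contains no coloop, it remains a circuit of $\cM/\cL^*$, a matroid of rank $r-m$ by \autoref{lem:basisrankcontract}, and a circuit can never exceed the rank by more than $1$, so $|C| \leq r-m+1$, i.e.\ $d \geq m$. I anticipate no real obstacle here; the only things demanding care are this inequality $d \geq m$ and correctly lining up the error term of \autoref{thmD} with the telescoped difference $\binom{n-1-d}{|C|}^{-1} - \binom{n-d}{|C|}^{-1}$.
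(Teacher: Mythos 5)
Your proof is correct, but it takes a genuinely different route from the paper's. The paper does not deduce Theorem~\ref{cor:thmD-simplified} from the statement of Theorem~\ref{thmD}; instead it re-runs the summation argument from scratch, replacing the Kruskal--Katona input $|\cI_{C',t-|C'|}|\geq\binom{d'+1}{t-|C'|}$ by the weaker bound $\binom{d'}{t-|C'|}$ (which needs no Kruskal--Katona at all, only that some $d'$-element independent set of $\cM'/C'$ exists), and then evaluates the resulting sum exactly via Lemma~\ref{lem:combo identity}, so that no error term ever appears. You instead treat Theorem~\ref{thmD} as a black box and absorb its error term into the telescoped difference $\binom{n-1-d}{|C|}^{-1}-\binom{n-d}{|C|}^{-1}=\frac{|C|}{n-d}\binom{n-1-d}{|C|}^{-1}$, reducing everything to the falling-factorial comparison $(n-m)^{\underline{d-m}}\geq(r+1-m)^{\underline{d-m}}$. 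I checked your manipulations: the absorption identity, the cancellation of $(n-r-2)!$ (valid since $n>r+1$ makes both binomial coefficients positive), and the final comparison are all right, and you correctly isolate the one non-obvious prerequisite, $d\geq m$, giving a valid argument for it ($C$ avoids all coloops, so it is a circuit of the rank-$(r-m)$ matroid $\cM/\cL^*$ and hence $|C|\leq r-m+1$). The paper handles this same point implicitly by setting $d'=d-m$ inside the proof of Theorem~\ref{thmD}. What each approach buys: yours is shorter given Theorem~\ref{thmD} and makes the logical dependence transparent (the simplified bound really is weaker, with the gap quantified by your falling-factorial ratio); the paper's is self-contained at the level of the summation, avoids the coloop bookkeeping in the final inequality, and produces the clean closed form directly. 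Both are sharp only when $d=0$.
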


\begin{comment}
\begin{proof}
% By contracting any coloops, we may assume that $m=0$. 
If $n=r+1$, the result follows immediately from \autoref{thmD}, so we may assume that $n>r+1$, in which case \autoref{thmD} gives
$$\xB_C \geq \binom{n - 1 - d}{r + 1 - d}^{-1} - \frac{r  + 1 - d}{n} \cdot \binom{n - 1}{r + 1}^{-1}.$$
We claim that the right-hand side of this expression satisfies the identity
\begin{equation} \label{eq:simplification-of-ThmD}
\binom{n - 1 - d}{r + 1 - d}^{-1} - \frac{r  + 1 - d}{n} \cdot \binom{n - 1}{r + 1}^{-1} \geq \binom{n - d}{r + 1 - d}^{-1}.
\end{equation}

% Using the falling factorial notation $a^{\underline{m}}=a(a-1)\cdots(a-m+1)$, we have
% \[
% \binom{n-d}{r+1-d}^{-1}
% = \frac{(r+1-d)!\,(n-r-1)!}{(n-d)!},
% \]
% and
% \[
% \binom{n-1-d}{r+1-d}^{-1}
% = \frac{(r+1-d)!\,(n-r-2)!}{(n-1-d)!}.
% \]

% Also,
% \[
% \binom{n-1}{r+1}^{-1}
% = \frac{(r+1)!\,(n-r-2)!}{(n-1)!}.
% \]

To see this, note that dividing both sides of \eqref{eq:simplification-of-ThmD} by $(r+1)^{\underline{d}} \binom{n-1}{r+1}^{-1}$ and doing some routine algebraic manipulations shows that it is equivalent to
\[
\frac{(n-r-1)(n-1)^{\underline{d-1}}}{(r+1)^{\underline{d}}}
\;\leq\;
\frac{(n-1)^{\underline{d}}}{(r+1)^{\underline{d}}}
-\frac{r+1-d}{n},
\]
which in turn is equivalent to
\[
n\,(n-1)^{\underline{d-1}}\;\ge\;(r+1)^{\underline{d}}.
\]

But the left-hand side equals $n^{\underline{d}}=\prod_{j=0}^{d-1}(n-j)$,
while the right-hand side is $(r+1)^{\underline{d}}=\prod_{j=0}^{d-1}(r+1-j)$.
Since $n>r+1$, we have $n-j \ge r+1-j$ for all $j$, and hence $n^{\underline{d}} \ge (r+1)^{\underline{d}}$.
Therefore the inequality holds (with equality only when $d=0$).
\end{proof}
\end{comment}

\begin{proof}
Let \(\cL^*\) be the set of all coloops in \(\cM\), with \(|\cL^*|=m\), and consider the contraction \(\cM'=(E', \cC')=\cM / \cL^*\), where \(n'=|E'|=n-m\) and \(r'=r(\cM')=r(\cM)-m\) by \autoref{lem:basisrankcontract}. 
% Denote by \(m\) the number of coloops, \(|\cL^*|\), and 
Denote by \(C'\) the restriction of $C$ to \(E\backslash \cL^*\), which has size \(|C'|=|C|=r'+1-d'\), where \(d'=d-m\).

In \autoref{thmD}, \(|\cI_{C',t-|C'|}|\) is bounded from below by \(\binom{d'+1}{t-|C'|}\), and therefore the weaker bound
\[ |\cI_{C',t-|C'|}| \geq \binom{d'}{t-|C'|} \]
also holds.
% as such \(\binom{d'}{t-|C'|}\leq \binom{d'+1}{t-|C'|}\) is a valid weaker lower bound for \(|\cI_{C',t-|C'|}|\), and so, 
Now, following the proof of \autoref{thmD}, standard algebraic manipulations give:
\[
\begin{aligned}
\beta_{C'} &\geq \frac{|C'|}{n'} \sum_{t = |C'|}^{r' + 1} \binom{d'}{t - |C'|} \cdot \binom{n' - 1}{t - 1}^{-1} \\
&\geq \frac{|C'|}{n'} \sum_{t = |C|}^{r' + 1} \frac{(d' )!}{(t - |C'|)! \cdot (r' + 1 - t)!} \cdot \frac{(t - 1)! \cdot (n' - t)!}{(n' - 1)!} \\
&\geq \frac{(d')! \cdot |C'|! \cdot (n' - r' - 1)!}{n'!} \sum_{t = |C'|}^{r' + 1} \binom{t - 1}{|C'| - 1} \binom{n' - t}{n' - r' - 1} \\
&= \frac{(d')! \cdot |C'|! \cdot (n' - r' - 1)!}{n'!} \sum_{t = |C'| - 1}^{r'} \binom{t}{|C'| - 1} \binom{n' - t - 1}{n' - r' - 1} \\
&= \frac{(d')! \cdot |C'|! \cdot (n' - r' - 1)!}{n'!} \binom{n'}{n' - d'}, 
\end{aligned}
\]
where the final equality holds by \autoref{lem:combo identity}.
% the final sum $\sum_{t = |C'|-1}^{r'} \binom{t}{|C'|-1} \binom{n' - t - 1}{n' - r' - 1}$
% is equal to $\binom{n'}{n' - d'}$, and 
Simplifying and applying \autoref{coloops-don't-change-beta-values} gives the desired result:

$$\xB_C=\xB_{C'} \geq \frac{|C'|! \cdot (n' - r' - 1)!}{(n' - d')!} = \binom{n' - d'}{|C'|}^{-1}=\binom{n-d}{|C|}^{-1}.$$
\end{proof}

\begin{comment}
\todo[inline]{Which proof should we use, in the following proof one still needs to show there are d choose k independent sets as such the lengths of the proofs are similar, I have also fixed the proof above for readability, so we can choose between the two}

\begin{proof}
Recall that in the proof of \autoref{thm:Tuttepolyformula} we saw that $|\cI_{C, k}|$ is equal to the number of independent sets in $\cM/C$ with size $k$, and from \autoref{cor:contracting-a-circuit} $r(\cM/C) = d$. From these we deduce that $|I_{C, k}| \geq \binom{d}{k}$ is a valid lower bound, \todo[inline]{In the original proof we had to prove this statement}, and hence from \autoref{thm:Tuttepolyformula} we obtain:

$$\beta_C \geq \frac{|C|}{n} \sum_{t = |C|}^{r + 1} \binom{d}{t - |C|} \cdot \binom{n - 1}{t - 1}^{-1}.$$

Standard algebraic manipulations give:

\begin{align*}
\beta_C &\geq \frac{|C|}{n} \sum_{t = |C|}^{r + 1} \binom{d}{t - |C|} \cdot \binom{n - 1}{t - 1}^{-1} \\
&\geq \frac{|C|}{n} \sum_{t = |C|}^{r + 1} \frac{d!}{(t - |C|)! \cdot (r - t + 1)!} \cdot \frac{(t - 1)! \cdot (n - t)!}{(n - 1)!} \\
&\geq \frac{|C|! \cdot d! \cdot (n - r - 1)!}{n!} \sum_{t = |C|}^{r + 1} \binom{t - 1}{|C| - 1} \cdot \binom{n - t}{n - r - 1} \\
&\geq \frac{|C|! \cdot d! \cdot (n - r - 1)!}{n!} \sum_{t = |C| - 1}^{r} \binom{t}{|C| - 1} \cdot \binom{n - t - 1}{n - r - 1}.
\end{align*}

Applying \autoref{lem:combo identity}, we find that this sum is $\binom{n}{n - d}$, so simplifying:

$$\beta_C \geq \binom{n - d}{|C|}^{-1}.$$
\end{proof}
\end{comment}

As an immediate corollary (with $d=0$) of \autoref{thmBC} and \autoref{cor:thmD-simplified}, we obtain:
\begin{cor} \label{cor:r+1beta}
If $|C|=r+1$, then $\xB_C=\binom{n}{r+1}^{-1}.$
\end{cor}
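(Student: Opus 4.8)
The plan is to observe that the statement is simply the coincidence of the upper bound of \autoref{thmBC} with the lower bound of \autoref{cor:thmD-simplified} in the degenerate case $d=0$. Indeed, $|C|=r+1$ is exactly the condition $d:=r+1-|C|=0$, which is admissible in both theorems (each only requires $0\le d<r+1$). First I would invoke \autoref{thmBC}, whose upper bound reads $\xB_C \le \frac{\binom{r+1}{|C|}}{\binom{n}{|C|}}$; since $\binom{r+1}{r+1}=1$, this is $\xB_C \le \binom{n}{r+1}^{-1}$. Next I would invoke \autoref{cor:thmD-simplified}, whose lower bound reads $\xB_C \ge \binom{n-d}{|C|}^{-1}$; with $d=0$ this is $\xB_C \ge \binom{n}{r+1}^{-1}$. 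Sandwiching the two gives $\xB_C = \binom{n}{r+1}^{-1}$.

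As a self-contained alternative that does not route through the bounds, one can argue directly from \autoref{thm:Tuttepolyformula}: when $|C|=r+1$, \autoref{cor:contracting-a-circuit} gives $r(\cM/C)=r-|C|+1=0$, so the only independent set of $\cM/C$ is the empty set. Hence $|\cI_{C,0}|=1$ and $|\cI_{C,k}|=0$ for $k\ge 1$, and the sum defining $\xB_C$ collapses to its single term $t=r+1$:
\[
\xB_C = \xB_C(r+1) = \frac{|C|}{n}\cdot|\cI_{C,0}|\cdot\binom{n-1}{r}^{-1} = \frac{r+1}{n}\binom{n-1}{r}^{-1},
\]
and a one-line simplification of falling factorials turns the right-hand side into $\binom{n}{r+1}^{-1}$.

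There is no genuine obstacle here: the only thing to check is that the hypotheses of \autoref{thmBC} and \autoref{cor:thmD-simplified} match those of the corollary (a circuit $C$ of a rank-$r$ matroid on $n$ elements) and that $d=0$ lies in their stated range — both points are immediate. I would present only the two-line deduction from the matching bounds, since that is precisely what the sentence introducing the corollary already advertises.
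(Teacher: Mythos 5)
Your main argument is exactly the paper's: the corollary is stated there as an immediate consequence of \autoref{thmBC} and \autoref{cor:thmD-simplified} with $d=0$, so the sandwich you describe is precisely the intended proof. Your self-contained alternative via \autoref{thm:Tuttepolyformula} is also correct; the paper happens to include a different direct proof (conditioning on which basis $B_i\subseteq C$ is called first and applying the law of total probability), but both bypass the bounds in essentially the same spirit.
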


One can also give a direct proof of Corollary~\ref{cor:r+1beta}, bypassing the lengthy computations appearing in the proofs of \autoref{thmBC} and \autoref{cor:thmD-simplified}. Since the argument is short and pleasant, we present it here. 

\begin{proof}[Direct proof of Corollary~\ref{cor:r+1beta}]
If a circuit $C$ is to win the game of matroid bingo, the victory must occur in round $r + 1$, with the first $r$ numbers called out forming a basis. There are $r + 1$ such bases $\{B_1, ..., B_{r + 1}\}$ which can occur, corresponding to the different $r$-element subsets of $C$. Let $E_i$ denote the event that the first $r$ numbers called out form the set $B_i$. By Bayes' theorem, we have
\[
\mathbb{P}(C \textrm{ wins}) = \sum_{i = 1}^{r + 1} \mathbb{P}(C \textrm{ wins} \; | \; E_i) \cdot \mathbb{P}(E_i).
\]

On the other hand, we have $\mathbb{P}(E_i) = \binom{n}{r}^{-1}$, since there are $\binom{n}{r}$ possible choices for the set consisting of the first $r$ numbers called out, and each is equally likely to occur. Furthermore, assuming event $E_i$ occurs, each of the $n - r$ possible numbers which can be called out in round $r + 1$ will lead to a distinct circuit winning, and precisely one of these is the unique element of $C-B_i$. Again, each of these outcomes is equally likely, so $\mathbb{P}(C \textrm{ wins} \; | \; E_i) = \frac{1}{n - r}$. Therefore
% Plugging this in and simplifying we obtain the desired result:
\[
\begin{aligned}
\mathbb{P}(C \textrm{ wins}) &= \sum_{i = 1}^{r + 1} \frac{1}{n - r} \binom{n}{r}^{-1} = \frac{r + 1}{n - r} \binom{n}{r}^{-1} \\
&= \frac{(r + 1) \cdot r! \cdot (n - r)!}{(n - r) \cdot n!} = \frac{(r + 1)! \cdot (n - r - 1)!}{n!} = \binom{n}{r + 1}^{-1}.
\end{aligned}
\]
\end{proof}

% \begin{proof}
%     If \(n=r+1\), then \(\xB_C=1=\binom{r+1}{r+1}^{-1}=\binom{n}{r+1}^{-1}\). 
%     
%     Otherwise, if \(n>r+1\), then 
%     \[\frac{\binom{r+1}{r+1}}{\binom{n}{r+1}}\geq\xB_C\geq \binom{n - 1}{r  + 1}^{-1} - \frac{r + 1}{n} \cdot \binom{n - 1}{r + 1}^{-1},\]
%     i.e.,
%     \[{\binom{n}{r+1}}^{-1}\geq\xB_C\geq \frac{n-r-1}{n} \cdot \binom{n - 1}{r + 1}^{-1} = {\binom{n}{r+1}}^{-1}.\]  
% \end{proof}

\subsection{Monotonicity theorems}

The following consequence of \autoref{thmBC} 
% (Corollary~\ref{cor:intro-noviolationsforrplus1}) 
shows that there can be no monotonicity violations involving a circuit of maximum size $r+1$.

\begin{cor}\label{corA}
Let $\cM$ be a matroid of rank $r$ on $n$ elements. Let $C$ and $C'$ be distinct circuits of $\cM$ with $|C|=r+1$ and $|C'|=r+1-d$ with $0\leq d<r+1$. Then $$\xB_C \leq \xB_{C'},$$ with equality if and only if $d=0$.
\end{cor}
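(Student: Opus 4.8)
The plan is to reduce the corollary to an elementary inequality between falling factorials, using the exact value of $\xB_C$ for a maximum-size circuit together with the lower bound on $\xB_{C'}$.

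First I would note that the hypothesis that $C$ and $C'$ are \emph{distinct} forces $n > r+1$: a circuit of size $r+1$ in a rank-$r$ matroid on $r+1$ elements must equal the ground set $E$, and then by axiom (C2) it is the only circuit of $\cM$, so no distinct circuit $C'$ could exist. Hence $n \ge r+2$, and this is what will make the final comparison strict when $d > 0$. Next, by \autoref{cor:r+1beta} we have the exact value $\xB_C = \binom{n}{r+1}^{-1}$, and by \autoref{cor:thmD-simplified} applied to $C'$ (with $|C'| = r+1-d$) we have $\xB_{C'} \ge \binom{n-d}{r+1-d}^{-1}$. So it suffices to show $\binom{n-d}{r+1-d} \le \binom{n}{r+1}$, with strict inequality exactly when $d > 0$.

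For this last step I would cancel the common factor $(n-r-1)!$ in the denominators and observe that the claimed inequality is equivalent to $(r+1)^{\underline d} \le n^{\underline d}$, since $\binom{n}{r+1}/\binom{n-d}{r+1-d} = n^{\underline d}/(r+1)^{\underline d}$. Comparing these two products of $d$ factors term by term, each factor $n-j$ strictly exceeds the corresponding factor $r+1-j$ for $0 \le j \le d-1$ (because $n > r+1$), and all the factors are positive since $d \le r$; hence the ratio is $\ge 1$, and is $> 1$ precisely when $d \ge 1$. Assembling the pieces: if $d = 0$ then \autoref{cor:r+1beta} applied to $C'$ gives $\xB_{C'} = \binom{n}{r+1}^{-1} = \xB_C$, while if $d > 0$ we get $\xB_C = \binom{n}{r+1}^{-1} < \binom{n-d}{r+1-d}^{-1} \le \xB_{C'}$, which gives the strict inequality and the ``only if'' direction of the equality claim.

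I do not anticipate a genuine obstacle here, as the argument is essentially bookkeeping. The only points requiring a little care are the edge-case observation that $n > r+1$ (so that the falling-factorial comparison is strict rather than an equality when $d=0$), and the fact that it is the combination of the \emph{sharp-enough} lower bound of \autoref{cor:thmD-simplified} with the \emph{exact} formula of \autoref{cor:r+1beta} that pins down the equality case.
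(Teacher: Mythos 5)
Your proposal is correct and follows essentially the same route as the paper: both use the exact value $\xB_C=\binom{n}{r+1}^{-1}$ from \autoref{cor:r+1beta} and the lower bound $\xB_{C'}\geq\binom{n-d}{r+1-d}^{-1}$ from \autoref{cor:thmD-simplified}, reduce to comparing $(r+1)^{\underline{d}}$ with $n^{\underline{d}}$, and dispose of the equality case by observing that distinctness of $C$ and $C'$ forces $n>r+1$. The only cosmetic difference is that you rule out $n=r+1$ at the outset, whereas the paper does so at the end.
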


\begin{proof}
By \autoref{cor:r+1beta}, we know that $\xB_C = \binom{n}{r + 1}^{-1}$, and from \autoref{cor:thmD-simplified} we know that 
$\xB_{C'} \geq \binom{n - d}{r + 1 - d}^{-1}$.
So it suffices to prove that
\begin{equation} \label{eq:yet-another-binomial-inequality}
\binom{n}{r+1}^{-1} \;\le\; \binom{n-d}{\,r+1-d\,}^{-1}
\end{equation}
for all integers $n,r,d$ with $0 \le d < r+1 \le n$.

To prove \eqref{eq:yet-another-binomial-inequality}, consider the ratio
\[
\frac{\binom{n-d}{\,r+1-d\,}}{\binom{n}{r+1}}
= \frac{(n-d)!\,(r+1)!}{(r+1-d)!\,n!} = \frac{(r+1)^{\underline{d}}}{n^{\underline{d}}}.
\]
% Written in falling factorial notation, this equals
% \[
% \frac{(r+1)^{\underline{d}}}{\,n^{\underline{d}}\,},
% \qquad
% a^{\underline{d}} := a(a-1)\cdots(a-d+1).
% \]
Since $n \ge r+1$, each factor in $n^{\underline{d}}$ is at least as large
as the corresponding factor in $(r+1)^{\underline{d}}$, so
\[
\frac{(r+1)^{\underline{d}}}{n^{\underline{d}}} \;\le\; 1.
\]
Hence $\binom{n-d}{r+1-d} \le \binom{n}{r+1}$, and taking reciprocals yields \eqref{eq:yet-another-binomial-inequality} 
(with equality iff $d=0$ or $n=r+1$). 

In our application, we cannot have $n=r+1$, because that would mean that $\cM$ has a unique circuit, so $\xB_C = \xB_{C'}$ only when $d=0$.
\end{proof}

Using the bounds given by \autoref{thmBC} and \autoref{cor:thmD-simplified}, we show that monotonicity violations cannot occur past a certain threshold (\autoref{thm:intro-threshhold}). 
% In Appendix~\ref{Appendix B}, we will make the threshhold $N(r)$ which appears in the statement quantitative.

\begin{thm} \label{thm:threshhold}
For fixed $r$, there exists $N = N(r)$ so that if $\cM$ is a matroid of rank $r$ on $n \geq N$ elements, then 
% $\cC$ is a valid collection of matroid bingo cards of rank $r$ with numbers selected from $\{ 1,\ldots, n \}$ then 
monotonicity holds for $\cM$.
\end{thm}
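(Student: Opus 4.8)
The plan is to play the upper bound of \autoref{thmBC} against the lower bound of \autoref{cor:thmD-simplified}. Fix $r$. Every circuit of a rank-$r$ matroid has size between $1$ and $r+1$, so any monotonicity violation involves two circuits $C_1, C_2$ with $s_2 := |C_2| < |C_1| =: s_1$, where $1 \le s_2 < s_1 \le r+1$. There are only finitely many such pairs $(s_1,s_2)$; it therefore suffices to find for each pair a threshold $N_{s_1,s_2}(r)$ such that $n \ge N_{s_1,s_2}(r)$ forces $\xB_{C_1} < \xB_{C_2}$ whenever $C_1, C_2$ are circuits of a rank-$r$ matroid on $n$ elements with $|C_1| = s_1$ and $|C_2| = s_2$, and then to set $N(r) := \max_{s_1,s_2} N_{s_1,s_2}(r)$.

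For a fixed such pair, set $d_2 := r+1-s_2$. By \autoref{thmBC}, $\xB_{C_1} \le \binom{r+1}{s_1}\big/\binom{n}{s_1}$, and by \autoref{cor:thmD-simplified}, $\xB_{C_2} \ge \binom{n-d_2}{s_2}^{-1}$. Hence it is enough to prove
\[
\binom{r+1}{s_1}\binom{n-d_2}{s_2} < \binom{n}{s_1}
\]
for all sufficiently large $n$. The left-hand side is at most $\binom{r+1}{s_1}\,n^{s_2}/s_2!$, a constant (depending only on $r$) times $n^{s_2}$, while the right-hand side equals $n^{\underline{s_1}}/s_1! \ge (n-r)^{s_1}/s_1!$, which for $n \ge 2r$ is at least $n^{s_1}/(2^{s_1}s_1!)$. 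Since $s_1 \ge s_2+1$, the desired inequality follows as soon as $n > 2^{s_1}\binom{r+1}{s_1}\,s_1!/s_2!$, an explicit bound depending only on $r$ (for instance $N_{s_1,s_2}(r) \le 2r + 4^{r+1}(r+1)!$ works uniformly over all admissible pairs).

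Taking $N(r)$ to be the maximum of these finitely many thresholds completes the argument. There is no real conceptual obstacle here: the crucial mechanism is that the upper bound on $\xB_{C_1}$ decays like $n^{-s_1}$ while the lower bound on $\xB_{C_2}$ decays only like $n^{-s_2}$, and $s_1 > s_2$. The only delicate point is the bookkeeping of elementary estimates. To extract the sharper explicit value $N(r) = (r+2) + (r+1)!\,\lceil e^{3r + 25/6}\rceil$ advertised in the remark above, one replaces the crude estimates $\binom{r+1}{s_1} \le 2^{r+1}$ and $(n-r)^{s_1} \ge (n/2)^{s_1}$ by tighter ones (using $\binom{r+1}{s_1} \le e^{r+1}$-type inequalities and retaining the exact falling factorials), but this refinement is unnecessary for the mere existence of $N(r)$.
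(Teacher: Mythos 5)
Your proposal is correct and follows essentially the same route as the paper: both pit the upper bound of \autoref{thmBC} against the lower bound of \autoref{cor:thmD-simplified}, observe that the former decays like $n^{-|C_1|}$ while the latter decays only like $n^{-|C_2|}$ with $|C_1|>|C_2|$, and take a maximum over the finitely many admissible size pairs. The only (cosmetic) difference is that the paper phrases the comparison asymptotically via $\lim_{n\to\infty} U_n/L_n = 0$, whereas you carry out explicit elementary estimates that also yield a concrete, if crude, value of $N(r)$.
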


% \begin{thm}\label{thmD}
% Given a rank $r$, there exists a threshold $N$ such that for any matroid $\cM$ of rank $r$ on $n$ elements where $n \geq N$, monotonicity will hold in $\cM$. 
% \end{thm}

\begin{proof}
Let $\cM$ be a matroid of rank $r$ on $n$ elements, and let $C$ and $C'$ be any pair of circuits in $\cM$ with $|C| > |C'|$. Write $|C| = r + 1 - d$ and $|C'| = r + 1 - d'$, so that $d < d'$. We wish to show that $\xB_C < \xB_{C'}$. We will in fact show the stronger result that
the upper bound $U_n$ on $\xB_C$ given by \autoref{thmBC} is smaller than the lower bound $L_n$ on $\xB_{C'}$ given by \autoref{cor:thmD-simplified}, i.e., that
% And to show this inequality, we can prove that our upper bound on $\xB_C$, $U_n$, is less than the lower bound on $\xB_{C'}$, $L_n$. This gives the equation:
$$U_n := \frac{(r+1)!(n-r-1+d)!}{n!d!} < L_n := \frac{(r+1-d')!(n-r-1)!}{(n-d')!}$$
for $n \gg 0$.

Writing $C_1 = \frac{(r + 1)!}{d!}$, we have
% Fix $r$, $d$, and $d'$, and consider $U_n$ and $L_n$ as $n$ tends to infinity. We can simplify the left side by introducing the constant 
% $C_1 = \frac{(r + 1)!}{d!}$ and canceling out the common terms in the remaining factorials:

\[
\begin{aligned}
    U_n &= \frac{(r+1)!(n-r-1+d)!}{n!d!} \\ &= C_1\frac{(n-r-1+d)!}{n!}\\
    &=C_1\prod_{i=0}^{r-d}\frac{1}{n-i}\sim C_1\frac{1}{n^{r+1-d}},
\end{aligned}
\]
where $f(n) \sim g(n)$ means that $\lim_{n\to\infty} f(n)/g(n)=1$.
% We could consider using falling factorial notation instead of these products since it was introduced earlier in the paper

% As $n$ approaches infinity, we see that the remaining product is asymptotic to a simple polynomial in the denominator. 
Similarly, writing $C_2 = (r + 1 - d')!$, we have
% introducing the constant $C_2 = (r + 1 - d')!$ and canceling the remaining factorials on the right side:
\[
\begin{aligned}
    L_n &= \frac{(r+1-d')!(n-r-1)!}{(n-d')!} \\ &= C_2\frac{(n-r-1)!}{(n-d')!}\\
    &= C_2\prod_{i=0}^{r-d'}\frac{1}{n-d'-i}
    % &= C_2\prod_{i=0}^{r-d'}\frac{1}{n-i'}
    \sim C_2\frac{1}{n^{r+1-d'}}.\\
\end{aligned}
\]

Thus,
% Notice that constants $C_1$ and $C_2$ depend only on $r$, $d$, and $d'$. Now, taking the ratio of our two asymptotic approximations we obtain:
$$\frac{U_n}{L_n} \sim \frac{C_1\frac{1}{n^{r+1-d}}}{C_2\frac{1}{n^{r+1-d'}}} = Cn^{d-d'},$$
% You are dealing with asymptotics, so you can't conclude that N' = C^{1/(d' - d)}. Also, we need to take the max of these thresholds over each pair of possible d and d' values to get one that guarantees complete monotonicity between all pairs of circuits.
where $C = \frac{C_1}{C_2}=\frac{(r+1)!}{d!(r+1-d')!}$. 
Since $d < d'$, it follows that 
$\lim_{n \to \infty} \frac{U_n}{L_n} = 0$, and in particular
there is a threshold $N'$ (depending on $r, d$ and $d'$) such that $U_n < L_n$ for $n \geq N'$.

Finally, if $N$ is the maximum of all these thresholds as $d$ and $d'$ vary through all elements of $\{ 0,1,\ldots,r \}$, 
we conclude that for $n \geq N$ we have $\xB_C < \xB_{C'}$ for all pairs $C,C'$ of circuits of any matroid of rank $r$ on $[n]$ such that $|C|>|C'|$.
% the possible values $0, 1, 2, ..., r$
% the exponent on $n$ is negative, so taking $n$ to infinity the ratio tends to 0. 
% So, there must be a finite threshold $N'$ such that for $n \geq N'$ the bound $U_n < L_n$ must occur. Therefore, past this threshold we obtain $\xB_C < \xB_{C'}$ by transitivity as desired. Finally, defining $N$ to be the max of all of these thresholds as $d$ and $d'$ vary through all the possible values $0, 1, 2, ..., r$, we find that monotonicity must hold between all pairs of circuits in $\cM$ if $n \geq N$.
% \todo[inline]{Matt still needs to edit this proof for grammar and brevity.}
\end{proof}

% We give a quantitative version of Theorem~\ref{thmD} in Appendix~\ref{Appendix B}.

\begin{rem}
Neither the monotonicity property nor its negation is preserved under taking minors.
For the first statement, if we add a loop to the matroid $\cM$ in \autoref{tab:matroid-1}, which has a monotonicity violation, the resulting matroid $\cM \oplus U_{0,1}$ (of which $\cM$ is a minor) does not appear in \autoref{Appendix A} and therefore satisfies monotonicity. 
For the second statement, note that any proper minor of $\cM$ has at most 7 elements and therefore satisfies monotonicity.

Similarly, neither the monotonicity property nor its negation is preserved under taking direct sums.
The first statement follows from the fact that the same matroid $\cM$ from the previous paragraph, which violates monotonicity, is the direct sum of two matroids on 4 elements, both of which satisfy monotonicity. 
% The second statement follows easily from \autoref{thm:threshhold}.
For the second statement, we checked (by computer) that the matroid $\cM \oplus \cM$ satisfies monotonicity. 
\end{rem}

\subsection{Equitable matroids}

Recall that a matroid $\cM$ is \emph{equitable} if all circuits $C$ yield the same value of $\xB_C$.
It is clear, by symmetry considerations, that uniform matroids are equitable.

However, there are non-uniform matroids which are also equitable. For example, the \textbf{dual Fano matroid} $F_7^*$, which is a rank 4 matroid on 7 elements, has this property:

\begin{center}
\begin{tabular}{| c | c |}
    \hline
    \multicolumn{2}{|c|}{\textbf{Winning probabilities for the circuits of $F_7^*$}} \\
    \hline
    $C$ & $\xB_C$\\
    \hline
     1245 & $1/7$\\ 
    \hline
     1237 & $1/7$\\ 
    \hline
     1356 & $1/7$\\
    \hline
     1467 & $1/7$\\ 
    \hline
     2346 & $1/7$\\
    \hline
     2567 & $1/7$\\
    \hline
     3457 & $1/7$\\
    \hline
\end{tabular}
\end{center}

\medskip

The dual Fano matroid is an example of a dual projective geometry over a finite field.
More generally:

\begin{df*}
Let $q$ be a prime power and let $n$ be a nonnegative integer. Denote by $\mathrm{PG}(n,q)$ the \textbf{projective geometry matroid} of rank $n+1$ over the finite field $\mathrm{GF}(q)$ with $q$ elements. Its ground set $E$ is the set of 1-dimensional subspaces of the $(n+1)$-dimensional vector space $\mathrm{GF}(q)^{n+1}$, and a subset of $E$ is independent if and only if the corresponding 1-dimensional subspaces are linearly independent in $\mathrm{GF}(q)^{n+1}$. 
% Equivalently, \mathrm{PG}(n,q) is the simple matroid whose elements are the points of the projective space of dimension n over \mathrm{GF}(q), with flats corresponding to projective subspaces.
\end{df*}

We prove the following result (\autoref{thm:intro-equitable}):

\begin{thm} \label{thm:equitable}
\begin{enumerate}
    \item[]
    \item If $C,C'$ are circuits of a matroid $\cM$ and there is an automorphism $\varphi$ of $\cM$ with $\varphi(C)=C'$, then $\xB_{C} = \xB_{C'}$. In particular, if the automorphism group of $\cM$ acts transitively on circuits, then $\cM$ is equitable.
    \item There exists an equitable matroid $\cM$ whose automorphism group does not act transitively on the set of circuits.
    \item Uniform matroids and duals of projective geometries over finite fields are equitable.
\end{enumerate}
\end{thm}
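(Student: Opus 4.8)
The three parts are essentially independent; parts (1) and (3) are soft, and the real content is part (2). For part (1), I would argue directly in the permutation model underlying the winning probabilities: regard a permutation $\sigma$ of $E$ as an ordering $\sigma(1),\sigma(2),\dots$ in which the bingo numbers are called. An automorphism $\varphi$ of $\cM$ is a bijection of $E$ that induces a bijection of the circuit set $\cC$, and post-composition $\sigma\mapsto\varphi\circ\sigma$ is a bijection of the set of all orderings of $E$. The key observation is that a circuit $D$ is contained in $\{\varphi(\sigma(1)),\dots,\varphi(\sigma(t))\}$ if and only if $\varphi^{-1}(D)$ is contained in $\{\sigma(1),\dots,\sigma(t)\}$; hence the round in which $D$ is completed under $\varphi\circ\sigma$ equals the round in which $\varphi^{-1}(D)$ is completed under $\sigma$. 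Since $\varphi^{-1}$ permutes $\cC$, the first circuit completed under $\varphi\circ\sigma$ is precisely the $\varphi$-image of the first circuit completed under $\sigma$. Therefore $\sigma\mapsto\varphi\circ\sigma$ restricts to a bijection from $\{\sigma : C\text{ wins}\}$ onto $\{\sigma : \varphi(C)\text{ wins}\}$, and dividing by $n!$ yields $\xB_C=\xB_{\varphi(C)}$. The ``in particular'' statement follows at once.

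For part (3), in both cases I would show that the automorphism group acts transitively on circuits, so that equitability follows from part (1). For $U_{r,n}$ this is immediate, since its automorphism group is $S_n$ and its circuits are exactly the $(r+1)$-element subsets of $[n]$. For $\mathrm{PG}(n,q)^*$, I would use: (i) the circuits of a dual matroid $\cM^*$ are the complements of the hyperplanes of $\cM$ (a standard fact); (ii) every $g\in\mathrm{GL}(n+1,q)$ permutes the $1$-dimensional subspaces of $\mathrm{GF}(q)^{n+1}$ and preserves linear independence, hence induces a matroid automorphism of $\mathrm{PG}(n,q)$ and therefore of $\mathrm{PG}(n,q)^*$; and (iii) $\mathrm{GL}(n+1,q)$ acts transitively on the $n$-dimensional subspaces of $\mathrm{GF}(q)^{n+1}$, hence on the hyperplanes of $\mathrm{PG}(n,q)$ and thus on their complements. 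So the automorphism group of $\mathrm{PG}(n,q)^*$ acts transitively on $\cC$, and part (1) applies.

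For part (2), I would first record a structural constraint that explains why the example is not obvious: by \autoref{corA}, a matroid with more than one circuit but possessing a circuit of the maximum size $r+1$ can be equitable only if \emph{every} circuit has size $r+1$, which forces $\cM=U_{r,n}$ (every $(r+1)$-subset is dependent, hence equal to the circuit it contains), and $U_{r,n}$ is circuit-transitive. Hence any example must have all circuits of size at most $r$. I would then exhibit one explicit such matroid $\cM$, located by a computer search through small matroids; verify that it is equitable by computing each $\xB_C$ via \autoref{thm:Tuttepolyformula} — equivalently, by counting the independent sets of each contraction $\cM/C$ — and checking that the values coincide; and finally verify that the automorphism group of $\cM$ has at least two orbits on $\cC$, for instance because the circuits of $\cM$ come in at least two distinct sizes, or simply by exhibiting two circuits that no automorphism interchanges. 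The main obstacle is precisely part (2): there is no structural reason for such a matroid to exist, so the content lies in locating a suitable example and carrying out the verification, whereas parts (1) and (3) are routine bookkeeping once the setup is in place.
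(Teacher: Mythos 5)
Your parts (1) and (3) are correct and follow essentially the same route as the paper: for (1), the paper likewise observes that $C$ wins under the calling order $\sigma$ iff $\varphi(C)$ wins under $\varphi\circ\sigma$, and that left-composition by $\varphi$ permutes $S_n$; for (3), the paper likewise reduces to circuit-transitivity of the automorphism group, using transitivity of $S_n$ on $(r+1)$-subsets for $U_{r,n}$ and, for $\mathrm{PG}(n,q)^*$, the transitive action on hyperplanes together with the facts that circuits of $\cM$ are complements of hyperplanes of $\cM^*$ and that $\cM$ and $\cM^*$ share an automorphism group. Your preliminary observation for (2) --- that \autoref{corA} forces any non-uniform equitable example to have all circuits of size at most $r$ --- is a nice structural remark not in the paper, and it is correct: if all circuits had size $r+1$ then every $(r+1)$-subset, being dependent, would be a circuit, forcing $\cM = U_{r,n}$.

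However, part (2) is an existence statement, and your proposal does not actually prove it: you describe a search strategy and a verification procedure but never exhibit a matroid, so the argument is incomplete exactly where you yourself identify the ``main obstacle'' to lie. The paper supplies the witness explicitly: $\cM = U_{4,6}\oplus U_{8,9}$, a rank-$12$ matroid on $15$ elements whose seven circuits (the six $5$-subsets of the first block and the single $9$-set forming the second block) each satisfy $\xB_C = \tfrac{1}{7}$ by direct computation via \autoref{thm:mainbetaformula}; non-transitivity then follows from your own observation that an automorphism cannot map a circuit of size $5$ to one of size $9$. Without such a concrete example (or at least a construction plus a complete verification that the two kinds of circuits have equal winning probability), part (2) remains unproven.
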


\begin{proof}
For (1), circuit $C$ wins when the bingo caller chooses numbers according to the permutation $\sigma$ of $[n]$ iff $C'$ wins when the 
bingo caller chooses numbers according to the permutation $\phi \circ \sigma$. Since left-multiplication by $\phi$ permutes the elements of $S_n$, the result follows immediately.

For (2), consider the matroids $\cM_1 = U_{4,6}$ on $\{1, ..., 6\}$, $\cM_2 = U_{8,9}$ on $\{7, ..., 15\}$, and their direct sum $\cM = \cM_1 \oplus \cM_2$. 
% Then $\cM$ is a equitable matroid whose automorphism group doesn't act transitively on the circuit set. 
A straightforward calculation using \autoref{thm:mainbetaformula} shows that for every circuit $C$ in this matroid, $\xB_C = \frac{1}{7}$, so $\cM$ is indeed equitable. And since automorphisms act bijectively on the ground set, if $C$ and $C'$ are circuits of a matroid with $|C| \neq |C'|$, there cannot be an automorphism $\varphi$ with $\varphi(C) = C'$. As the circuits of $\cM_1$ all have size 5 and the single circuit of $\cM_2$ has size 9, the automorphism group cannot act transitively on the circuits of $\cM$.

For (3), if $\cM = U_{r,n}$ the circuits of $\cM$ are all subsets of rank $r+1$. It is well-known, and easy to see, that (a) the symmetric group $S_n$ acts by automorphisms on $U_{r,n}$, and (b) $S_n$ acts transitively on the set of subsets of $[n]$ of size $k$ for all $1 \leq k \leq n$. (Proof: Let $C,C’\subseteq\{1,\dots,n\}$ with $|C|=|C’|=k$. Choose any bijection
$f:C\to C’$ and any bijection $g: [n]\setminus C \to [n]\setminus C’$.
Define $\sigma(i)= f(i)$ for $i \in C$ and $\sigma(i)=g(i)$ for $i\not\in C$. Then $\sigma(C)=C'$.)
The result now follows from (1).

If $\cM= \mathrm{PG}(n,q)$, it is well-known that the automorphism group of $\mathrm{PG}(n,q)$ acts transitively on the set of hyperplanes (see, e.g.,~\cite[p.18]{nelson2011exponentially}). Since circuits of a matroid $\cM$ are just complements of hyperplanes of the dual matroid $\cM^*$ \cite[Proposition 2.1.22]{oxley2011} and $\cM$ and $\cM^*$ have the same automorphism group \cite[Proposition 2.1.21]{oxley2011}, the result follows from (1).
\end{proof}

\appendix

\section{Monotonicity violations in matroids with $|E|\leq 9$}
\label{Appendix A}
% We can discuss the best ways of displaying these. I am purposely including information that will probably be removed like the database index and circuits (from Brennan's Code) for our future reference.

% \todo[inline]{Improve formatting. Make it both more compact and more user-friendly (e.g. it's hard to compare the relative size of the $\xB_C$'s as written.}
% \todo[inline]{Explain that these are the only monotonicity violations on matroids of up to 9 elements, up to isomorphism.}
% \todo[inline]{Labels like ``Matroid 1915'' aren't very useful without some further explanation. Are we planning to link to a GitHub database with this numbering?}

% \todo[inline]{{\bf Matt:} I used ChatGPT to reformat this information and sort by descending beta values. What do you think?}

% Brennan: I like this format! I can include a copy in the repository as well with additional information like the database index, graphic/non-graphic, and a picture of said graph where applicable. Then, I can put a link to it here. I should probably also check at some point that I originally, and ChatGPT afterwards, copied everything correctly.

The following matroids are, up to isomorphism, the only matroids having up to 9 elements in which there is a monotonicity violation.

In particular, monotonicity holds for all matroids on at most 7 elements, and the matroid in \autoref{tab:matroid-1} is the unique matroid of size 8 in which there is a monotonicity violation.

% ===================== Matroid 1 =====================
\begin{longtable}{@{}l c r @{\,$\approx$\,} l@{}}
\caption{Matroid (n=8, r=5).}
\label{tab:matroid-1}\\
\toprule
\textbf{Circuit} & $\mathbf{|C|}$ & \multicolumn{2}{c}{$\boldsymbol{\xB}$-values (fraction \& decimal)} \\
\midrule
\endfirsthead
\toprule
\textbf{Circuit} & $\mathbf{|C|}$ & \multicolumn{2}{c}{$\boldsymbol{\xB}$-values (fraction \& decimal)} \\
\midrule
\endhead
\bottomrule
\endfoot
$\{5, 6, 7, 8\}$ & 4 & $3/14$ & 0.2143 \\
$\{1, 2, 3\}$ & 3 & $11/56$ & 0.1964 \\
$\{1, 2, 4\}$ & 3 & $11/56$ & 0.1964 \\
$\{1, 3, 4\}$ & 3 & $11/56$ & 0.1964 \\
$\{2, 3, 4\}$ & 3 & $11/56$ & 0.1964 \\
\end{longtable}

% ===================== Matroid 2 =====================
\begin{longtable}{@{}l c r @{\,$\approx$\,} l@{}}
\caption{Matroid (n=9, r=5).}
\label{tab:matroid-2}\\
\toprule
\textbf{Circuit} & $\mathbf{|C|}$ & \multicolumn{2}{c}{$\boldsymbol{\xB}$-values (fraction \& decimal)} \\
\midrule
\endfirsthead
\toprule
\textbf{Circuit} & $\mathbf{|C|}$ & \multicolumn{2}{c}{$\boldsymbol{\xB}$-values (fraction \& decimal)} \\
\midrule
\endhead
\bottomrule
\endfoot
$\{1, 2\}$ & 2 & $49/180$ & 0.2722 \\
$\{3, 7\}$ & 2 & $49/180$ & 0.2722 \\
$\{4, 5, 6, 8\}$ & 4 & $13/126$ & 0.1032 \\
$\{1, 3, 9\}$ & 3 & $37/420$ & 0.0881 \\
$\{2, 3, 9\}$ & 3 & $37/420$ & 0.0881 \\
$\{1, 7, 9\}$ & 3 & $37/420$ & 0.0881 \\
$\{2, 7, 9\}$ & 3 & $37/420$ & 0.0881 \\
\end{longtable}

% ===================== Matroid 3 =====================
\begin{longtable}{@{}l c r @{\,$\approx$\,} l@{}}
\caption{Matroid (n=9, r=5).}
\label{tab:matroid-3}\\
\toprule
\textbf{Circuit} & $\mathbf{|C|}$ & \multicolumn{2}{c}{$\boldsymbol{\xB}$-values (fraction \& decimal)} \\
\midrule
\endfirsthead
\toprule
\textbf{Circuit} & $\mathbf{|C|}$ & \multicolumn{2}{c}{$\boldsymbol{\xB}$-values (fraction \& decimal)} \\
\midrule
\endhead
\bottomrule
\endfoot
$\{1, 2\}$ & 2 & $269/1260$ & 0.2135 \\
$\{1, 9\}$ & 2 & $269/1260$ & 0.2135 \\
$\{2, 9\}$ & 2 & $269/1260$ & 0.2135 \\
$\{3, 4, 6, 8\}$ & 4 & $2/21$ & 0.0952 \\
$\{1, 5, 7\}$ & 3 & $37/420$ & 0.0881 \\
$\{2, 5, 7\}$ & 3 & $37/420$ & 0.0881 \\
$\{5, 7, 9\}$ & 3 & $37/420$ & 0.0881 \\
\end{longtable}

% ===================== Matroid 4 =====================
\begin{longtable}{@{}l c r @{\,$\approx$\,} l@{}}
\caption{Matroid (n=9, r=5).}
\label{tab:matroid-4}\\
\toprule
\textbf{Circuit} & $\mathbf{|C|}$ & \multicolumn{2}{c}{$\boldsymbol{\xB}$-values (fraction \& decimal)} \\
\midrule
\endfirsthead
\toprule
\textbf{Circuit} & $\mathbf{|C|}$ & \multicolumn{2}{c}{$\boldsymbol{\xB}$-values (fraction \& decimal)} \\
\midrule
\endhead
\bottomrule
\endfoot
$\{1, 2\}$ & 2 & $49/180$ & 0.2722 \\
$\{3, 4, 6, 9\}$ & 4 & $1/9$ & 0.1111 \\
$\{1, 5, 7\}$ & 3 & $37/420$ & 0.0881 \\
$\{2, 5, 7\}$ & 3 & $37/420$ & 0.0881 \\
$\{1, 5, 8\}$ & 3 & $37/420$ & 0.0881 \\
$\{2, 5, 8\}$ & 3 & $37/420$ & 0.0881 \\
$\{1, 7, 8\}$ & 3 & $37/420$ & 0.0881 \\
$\{2, 7, 8\}$ & 3 & $37/420$ & 0.0881 \\
$\{5, 7, 8\}$ & 3 & $37/420$ & 0.0881 \\
\end{longtable}

% ===================== Matroid 5 =====================
\begin{longtable}{@{}l c r @{\,$\approx$\,} l@{}}
\caption{Matroid (n=9, r=5).}
\label{tab:matroid-5}\\
\toprule
\textbf{Circuit} & $\mathbf{|C|}$ & \multicolumn{2}{c}{$\boldsymbol{\xB}$-values (fraction \& decimal)} \\
\midrule
\endfirsthead
\toprule
\textbf{Circuit} & $\mathbf{|C|}$ & \multicolumn{2}{c}{$\boldsymbol{\xB}$-values (fraction \& decimal)} \\
\midrule
\endhead
\bottomrule
\endfoot
$\{1, 4, 5, 7\}$ & 4 & $5/42$ & 0.1190 \\
$\{2, 3, 6\}$ & 3 & $37/420$ & 0.0881 \\
$\{2, 3, 8\}$ & 3 & $37/420$ & 0.0881 \\
$\{2, 6, 8\}$ & 3 & $37/420$ & 0.0881 \\
$\{3, 6, 8\}$ & 3 & $37/420$ & 0.0881 \\
$\{2, 3, 9\}$ & 3 & $37/420$ & 0.0881 \\
$\{2, 6, 9\}$ & 3 & $37/420$ & 0.0881 \\
$\{3, 6, 9\}$ & 3 & $37/420$ & 0.0881 \\
$\{2, 8, 9\}$ & 3 & $37/420$ & 0.0881 \\
$\{3, 8, 9\}$ & 3 & $37/420$ & 0.0881 \\
$\{6, 8, 9\}$ & 3 & $37/420$ & 0.0881 \\
\end{longtable}

% ===================== Matroid 6 =====================
\begin{longtable}{@{}l c r @{\,$\approx$\,} l@{}}
\caption{Matroid (n=9, r=6).}
\label{tab:matroid-6}\\
\toprule
\textbf{Circuit} & $\mathbf{|C|}$ & \multicolumn{2}{c}{$\boldsymbol{\xB}$-values (fraction \& decimal)} \\
\midrule
\endfirsthead
\toprule
\textbf{Circuit} & $\mathbf{|C|}$ & \multicolumn{2}{c}{$\boldsymbol{\xB}$-values (fraction \& decimal)} \\
\midrule
\endhead
\bottomrule
\endfoot
$\{1, 9\}$ & 2 & $641/1260$ & 0.5087 \\
$\{2, 3, 5, 6, 8\}$ & 5 & $37/252$ & 0.1468 \\
$\{1, 2, 4, 7\}$ & 4 & $13/90$ & 0.1444 \\
$\{2, 4, 7, 9\}$ & 4 & $13/90$ & 0.1444 \\
$\{1, 3, 4, 5, 6, 7, 8\}$ & 7 & $1/36$ & 0.0278 \\
$\{3, 4, 5, 6, 7, 8, 9\}$ & 7 & $1/36$ & 0.0278 \\
\end{longtable}

% ===================== Matroid 7 =====================
\begin{longtable}{@{}l c r @{\,$\approx$\,} l@{}}
\caption{Matroid (n=9, r=6).}
\label{tab:matroid-7}\\
\toprule
\textbf{Circuit} & $\mathbf{|C|}$ & \multicolumn{2}{c}{$\boldsymbol{\xB}$-values (fraction \& decimal)} \\
\midrule
\endfirsthead
\toprule
\textbf{Circuit} & $\mathbf{|C|}$ & \multicolumn{2}{c}{$\boldsymbol{\xB}$-values (fraction \& decimal)} \\
\midrule
\endhead
\bottomrule
\endfoot
$\{3, 4, 6, 8\}$ & 4 & $3/14$ & 0.2143 \\
$\{1, 2, 7\}$ & 3 & $11/56$ & 0.1964 \\
$\{1, 2, 9\}$ & 3 & $11/56$ & 0.1964 \\
$\{1, 7, 9\}$ & 3 & $11/56$ & 0.1964 \\
$\{2, 7, 9\}$ & 3 & $11/56$ & 0.1964 \\
\end{longtable}

% ===================== Matroid 8 =====================
\begin{longtable}{@{}l c r @{\,$\approx$\,} l@{}}
\caption{Matroid (n=9, r=6).}
\label{tab:matroid-8}\\
\toprule
\textbf{Circuit} & $\mathbf{|C|}$ & \multicolumn{2}{c}{$\boldsymbol{\xB}$-values (fraction \& decimal)} \\
\midrule
\endfirsthead
\toprule
\textbf{Circuit} & $\mathbf{|C|}$ & \multicolumn{2}{c}{$\boldsymbol{\xB}$-values (fraction \& decimal)} \\
\midrule
\endhead
\bottomrule
\endfoot
$\{1, 2, 7\}$ & 3 & $19/60$ & 0.3167 \\
$\{1, 5, 9\}$ & 3 & $32/105$ & 0.3048 \\
$\{3, 4, 6, 8, 9\}$ & 5 & $37/252$ & 0.1468 \\
$\{2, 5, 7, 9\}$ & 4 & $13/90$ & 0.1444 \\
$\{1, 3, 4, 5, 6, 8\}$ & 6 & $5/84$ & 0.0595 \\
$\{2, 3, 4, 5, 6, 7, 8\}$ & 7 & $1/36$ & 0.0278 \\
\end{longtable}

% ===================== Matroid 9 =====================
\begin{longtable}{@{}l c r @{\,$\approx$\,} l@{}}
\caption{Matroid (n=9, r=6).}
\label{tab:matroid-9}\\
\toprule
\textbf{Circuit} & $\mathbf{|C|}$ & \multicolumn{2}{c}{$\boldsymbol{\xB}$-values (fraction \& decimal)} \\
\midrule
\endfirsthead
\toprule
\textbf{Circuit} & $\mathbf{|C|}$ & \multicolumn{2}{c}{$\boldsymbol{\xB}$-values (fraction \& decimal)} \\
\midrule
\endhead
\bottomrule
\endfoot
$\{1, 2, 7\}$ & 3 & $19/60$ & 0.3167 \\
$\{3, 4, 5, 6, 9\}$ & 5 & $1/6$ & 0.1667 \\
$\{1, 2, 8, 9\}$ & 4 & $13/90$ & 0.1444 \\
$\{1, 7, 8, 9\}$ & 4 & $13/90$ & 0.1444 \\
$\{2, 7, 8, 9\}$ & 4 & $13/90$ & 0.1444 \\
$\{1, 2, 3, 4, 5, 6, 8\}$ & 7 & $1/36$ & 0.0278 \\
$\{1, 3, 4, 5, 6, 7, 8\}$ & 7 & $1/36$ & 0.0278 \\
$\{2, 3, 4, 5, 6, 7, 8\}$ & 7 & $1/36$ & 0.0278 \\
\end{longtable}

% ===================== Matroid 10 =====================
\begin{longtable}{@{}l c r @{\,$\approx$\,} l@{}}
\caption{Matroid (n=9, r=6).}
\label{tab:matroid-10}\\
\toprule
\textbf{Circuit} & $\mathbf{|C|}$ & \multicolumn{2}{c}{$\boldsymbol{\xB}$-values (fraction \& decimal)} \\
\midrule
\endfirsthead
\toprule
\textbf{Circuit} & $\mathbf{|C|}$ & \multicolumn{2}{c}{$\boldsymbol{\xB}$-values (fraction \& decimal)} \\
\midrule
\endhead
\bottomrule
\endfoot
$\{2, 4, 9\}$ & 3 & $32/105$ & 0.3048 \\
$\{3, 5, 6, 8, 9\}$ & 5 & $37/252$ & 0.1468 \\
$\{1, 2, 4, 7\}$ & 4 & $13/90$ & 0.1444 \\
$\{1, 2, 7, 9\}$ & 4 & $13/90$ & 0.1444 \\
$\{1, 4, 7, 9\}$ & 4 & $13/90$ & 0.1444 \\
$\{2, 3, 4, 5, 6, 8\}$ & 6 & $5/84$ & 0.0595 \\
$\{1, 2, 3, 5, 6, 7, 8\}$ & 7 & $1/36$ & 0.0278 \\
$\{1, 3, 4, 5, 6, 7, 8\}$ & 7 & $1/36$ & 0.0278 \\
\end{longtable}

% ===================== Matroid 11 =====================
\begin{longtable}{@{}l c r @{\,$\approx$\,} l@{}}
\caption{Matroid (n=9, r=6).}
\label{tab:matroid-11}\\
\toprule
\textbf{Circuit} & $\mathbf{|C|}$ & \multicolumn{2}{c}{$\boldsymbol{\xB}$-values (fraction \& decimal)} \\
\midrule
\endfirsthead
\toprule
\textbf{Circuit} & $\mathbf{|C|}$ & \multicolumn{2}{c}{$\boldsymbol{\xB}$-values (fraction \& decimal)} \\
\midrule
\endhead
\bottomrule
\endfoot
$\{3, 5, 6, 8, 9\}$ & 5 & $1/6$ & 0.1667 \\
$\{1, 2, 4, 7\}$ & 4 & $13/90$ & 0.1444 \\
$\{1, 2, 4, 9\}$ & 4 & $13/90$ & 0.1444 \\
$\{1, 2, 7, 9\}$ & 4 & $13/90$ & 0.1444 \\
$\{1, 4, 7, 9\}$ & 4 & $13/90$ & 0.1444 \\
$\{2, 4, 7, 9\}$ & 4 & $13/90$ & 0.1444 \\
$\{1, 2, 3, 4, 5, 6, 8\}$ & 7 & $1/36$ & 0.0278 \\
$\{1, 2, 3, 5, 6, 7, 8\}$ & 7 & $1/36$ & 0.0278 \\
$\{1, 3, 4, 5, 6, 7, 8\}$ & 7 & $1/36$ & 0.0278 \\
$\{2, 3, 4, 5, 6, 7, 8\}$ & 7 & $1/36$ & 0.0278 \\
\end{longtable}

\begin{small}
 \bibliographystyle{plain}
 \bibliography{bingobib}
 %\printbibliography
 \end{small}

\end{document}